\newtheorem{thm}{\hspace*{20pt}Theorem}
\newtheorem{lem}{\hspace*{20pt}Lemma}[section]
\newtheorem{pro}{\hspace*{20pt}Proposition} 
\newcounter{constant}[section] 
\newcommand{\const}{\ifnum \theconstant >  0 \stepcounter{constant}\theconstant
                                             \else \setcounter{constant}{1}\theconstant \fi }
\newcounter{subconstant} 
\newcounter{submioneconstant} 
\newcounter{subconstanta}
\newcounter{submitwoconstant}
\newcommand{\suba}{\thesubconstanta} 
\newcommand{\insa}{\setcounter{subconstanta}{\value{constant}}}  
\newcounter{subconstantb} 
\newcommand{\subb}{\thesubconstantb} 
\newcommand{\insb}{\setcounter{subconstantb}{\value{constant}}}  
\newcounter{subconstantc} 
\newcommand{\subc}{\thesubconstantc} 
\newcommand{\insc}{\setcounter{subconstantc}{\value{constant}}}  
\newcounter{subconstantd} 
\newcommand{\subd}{\thesubconstantd} 
\newcommand{\insd}{\setcounter{subconstantd}{\value{constant}}}  
\newcounter{subconstante} 
\newcommand{\sube}{\thesubconstante} 
\newcommand{\inse}{\setcounter{subconstante}{\value{constant}}}  
\newcounter{subconstantf} 
\newcommand{\subf}{\thesubconstantf} 
\newcommand{\insf}{\setcounter{subconstantf}{\value{constant}}}  
\newcounter{subconstantg} 
\newcommand{\subg}{\thesubconstantg} 
\newcommand{\insg}{\setcounter{subconstantg}{\value{constant}}}  
\newcounter{subconstanth} 
\newcommand{\subh}{\thesubconstanth} 
\newcommand{\insh}{\setcounter{subconstanth}{\value{constant}}}  
\newcounter{subconstanti} 
\newcommand{\subi}{\thesubconstanti} 
\newcommand{\insi}{\setcounter{subconstanti}{\value{constant}}}  
\newcounter{subconstantj} 
\newcommand{\subj}{\thesubconstantj} 
\newcommand{\insj}{\setcounter{subconstantj}{\value{constant}}}  
\newcounter{subconstantk} 
\newcommand{\subk}{\thesubconstantk} 
\newcommand{\insk}{\setcounter{subconstantk}{\value{constant}}}  
\newcounter{subconstantl} 
\newcommand{\subl}{\thesubconstantl} 
\newcommand{\insl}{\setcounter{subconstantl}{\value{constant}}}  
\newcounter{subconstantm} 
\newcommand{\subm}{\thesubconstantm} 
\newcommand{\insm}{\setcounter{subconstantm}{\value{constant}}}  
\newcounter{subconstantn} 
\newcommand{\subn}{\thesubconstantn} 
\newcommand{\insn}{\setcounter{subconstantn}{\value{constant}}}  
\newcounter{subconstanto} 
\newcommand{\subo}{\thesubconstanto} 
\newcommand{\inso}{\setcounter{subconstanto}{\value{constant}}}  
\newcounter{subconstantp}
\newcounter{subconstantq}
\newcounter{subconstantr}
\newcounter{subconstants}
\newcounter{subconstantt}
\newcounter{subconstantu}
\newcounter{subconstantv}
\newcounter{subconstantw}
\newcounter{subconstantx}
\newcounter{subconstanty}
\newcounter{subconstantz}
\newcommand{\amin}{\alpha_{\min}}
\newcommand{\binf}{\beta_{\inf}}
\newcommand{\hbinf}{\hat{\beta}_{\inf}}
\newcommand{\bd}{\beta_d}
\newcommand{\supp}{{\rm supp}}
\newcommand{\Iinf}{I_{\inf}}
\newcommand{\tpsi}{\tilde{\psi}}
\newcommand{\psid}{\psi_d}
\newcommand{\hpsi}{\hat{\psi}}
\newcommand{\wka}{w_{k,\alpha}}
\newcommand{\twka}{\tilde{w}_{k,\alpha}}
\newcommand{\twkaone}{\tilde{w}_{k,\alpha}^{(1)}}
\newcommand{\twktwo}{\tilde{w}_{k}^{(2)}}
\newcommand{\twtwo}{\tilde{w}^{(2)}}
\newcommand{\tftwo}{\tilde{f}^{(2)}}
\newcommand{\wk}{w_k}
\newcommand{\tw}{\tilde{w}}
\newcommand{\tf}{\tilde{f}}
\newcommand{\tfk}{\tilde{f}_k}
\newcommand{\twk}{\tilde{w}_k}
\newcommand{\wkak}{w_{k,\alpha_k}}
\newcommand{\wkao}{w_{k,\alpha_0}}
\newcommand{\wkb}{w_{k,\beta}}
\newcommand{\wkamin}{w_{k,\alpha_{\min}}}
\newcommand{\zka}{z_{k,\alpha}}
\newcommand{\zkb}{z_{k,\beta}}
\newcommand{\hka}{h_{k,\alpha}}
\newcommand{\twkb}{\tilde{w}_{k,\beta}}
\newcommand{\hwkb}{\hat{w}_{k,\beta}}
\newcommand{\wkt}{w_{k,\tau}}
\newcommand{\twkttwo}{\tilde{w}_{k,\tau}^{(2)}}
\newcommand{\hw}{\hat{w}}
\newcommand{\hwk}{\hat{w}_k}
\newcommand{\hwb}{\hat{w}_\beta}
\newcommand{\sigk}{\sigma_k}
\newcommand{\sigka}{\sigma_{k,\alpha}}
\newcommand{\sigkao}{\sigma_{k,\alpha_0}}
\newcommand{\muk}{\mu_k}
\newcommand{\tauk}{\tau_k}
\newcommand{\xik}{\xi_k}
\newcommand{\txik}{\tilde{\xi}_k}
\newcommand{\txiktwo}{\tilde{\xi}_{k}^{(2)}}
\newcommand{\hxik}{\hat{\xi}_k}
\newcommand{\hzeta}{\hat{\zeta}}
\newcommand{\Pka}{P_{k,\alpha}}
\newcommand{\Qka}{Q_{k,\alpha}}
\newcommand{\tdk}{\tilde{\delta}_k}
\newcommand{\tgam}{\tilde{\gamma}}
\newcommand{\tgamk}{\tilde{\gamma}_k}
\newcommand{\hgam}{\hat{\gamma}}
\newcommand{\hf}{\hat{f}}
\newcommand{\hfk}{\hat{f}_k}
\newcommand{\hdk}{\hat{\delta}_k}
\newcommand{\hz}{\hat{z}}
\newcommand{\calB}{{\cal B}}
\newcommand{\hcalB}{\hat{\cal B}}
\newcommand{\hI}{\hat{I}}
\newcommand{\hd}{\hat{d}}
\newcommand{\hcalI}{\hat{\cal I}}
\newcommand{\hcalC}{\hat{\cal C}}
\newcommand{\hs}{\hat{s}}
\newcommand{\hc}{\hat{c}}
\newcommand{\hb}{\hat{\beta}}
\newcommand{\hJ}{\hat{J}}
\newcommand{\calP}{{\cal P}}
\begin{document}

\title{Extremal boundedness of a variational functional in point vortex mean field theory associated with probability measures}

\author{Takashi Suzuki\footnotemark[1] \ \ \ Ryo Takahashi \footnotemark[2] \ \ \ Xiao Zhang \footnotemark[3]}

\footnotetext[1]{
Division of Mathematical Science, Department of Systems Innovation, 
Graduate School of Engineering Science, Osaka University, 
Machikaneyamacho 1-3, Toyonakashi, 560-8531, Japan. 
(E-mail: {\it suzuki@sigmath.es.osaka-u.ac.jp})
}
\footnotetext[2]{
Division of Mathematical Science, Department of Systems Innovation, 
Graduate School of Engineering Science, Osaka University, 
Machikaneyamacho 1-3, Toyonakashi, 560-8531, Japan. 
(E-mail: {\it r-takaha@sigmath.es.osaka-u.ac.jp})
}
\footnotetext[3]{
Division of Mathematical Science, Department of Systems Innovation, 
Graduate School of Engineering Science, Osaka University, 
Machikaneyamacho 1-3, Toyonakashi, 560-8531, Japan. 
(E-mail: {\it zhangx@sigmath.es.osaka-u.ac.jp})
}

\date{\today}

\maketitle

\begin{abstract}
We study a variational functional of Trudinger-Moser type associated with one-sided Borel probability measure. 
Its boundedness at the extremal parameter holds when the residual vanishing occurs. 
In the proof we use a variant of the Y.Y. Li estimate. 
\end{abstract}

\section{Introduction}

The purpose of the present paper is to study the boundedness of a variational function concerning the  mean field limit of many point vortices \cite{sawada-suzuki08}.  This limit takes the form 
\begin{equation}
-\Delta v=\lambda\int_I \alpha\left(\frac{e^{\alpha v}}{\int_\Omega e^{\alpha v}}-\frac{1}{|\Omega|}\right)\calP(d\alpha) \quad \mbox{on $\Omega$}, \quad \int_\Omega v=0, 
 \label{eqn:SawadaSuzukiModel}
\end{equation}
where $\Omega=(\Omega,g)$ is a compact and orientable Riemannian surface without boundary in dimension two, $dx$ a volume element on $\Omega$, and $|\Omega|$ the volume of $\Omega$: $|\Omega|=\int_\Omega dx$. 
The unknown variable $v$ stands for the stream function of the fluid and $\calP=\calP(d\alpha)$ is a Borel probability measure on $I=[-1,1]$, representing a deterministic distribution of the circulation of vortices. 

Single circulation is described by $\calP=\delta_{+1}$. 
In this simplest case, equation \eqref{eqn:SawadaSuzukiModel} is sometimes called the mean field equation. 
Since Onsager's pioneering work of statistical mechanics on two-dimensional equilibrium turbulence \cite{onsager}, there are numerous mathematical and physical references in this case (see, for instance, \cite{Suzuki07, Tarantello08} and the references therein). 
Also, the other model $\calP=(1-\tau)\delta_{-1}+\tau\delta_{+1}$, $0<\tau<1$, is concerned with signed vortices \cite{jm73, pl76}. Equation \eqref{eqn:SawadaSuzukiModel} is thus regarded as a generalization of these cases.  

There, the deterministic distribution of circulations is described by $P(d\alpha)$. 
Several works are already devoted to equation (\ref{eqn:SawadaSuzukiModel}), particularly, when $\calP$ is atomic \cite{ew09, jwyz08, ors10, os06}, i.e., 
\begin{equation} 
\calP=\sum_{i=1}^N b_i\delta_{\alpha_i}, \quad \alpha_i\in I, \ b_i>0, \ \sum_{i=1}^N b_i=1. 
 \label{eqn:1.2}
\end{equation}
Actually, this model is equivalent to the Liouville system studied by \cite{ck94, csw97, sw05}. 
We note that L. Onsager himself arrived at (\ref{eqn:SawadaSuzukiModel}) for (\ref{eqn:1.2}), see \cite{es06}. 

Model \eqref{eqn:SawadaSuzukiModel} is the Euler-Lagrange equation of the functional 
\begin{equation*}
J_\lambda(v)=\frac{1}{2}\int_\Omega |\nabla v|^2-\lambda\int_I \log\left(\int_\Omega e^{\alpha v}\right)\calP(d\alpha), \quad v\in E, 
\end{equation*}
where 
\[E=\{v\in H^1(\Omega) \ | \ \int_\Omega v=0\}. \]
Hence it may be the first step to clarify its boundedness to study (\ref{eqn:SawadaSuzukiModel}) for general $\calP(d\alpha)$.  It is a kind of the Trudinger-Moser inequality, 
\begin{equation} 
\inf_{v\in E}J_\lambda(v)>-\infty. 
 \label{eqn:1.3}
\end{equation} 

In the atomic case of (\ref{eqn:1.2}), the best constant of $\lambda$ for (\ref{eqn:1.3}) is known \cite{sw05}.  This result is originally described in the dual form of logarithmic HLS inequality (see \cite{ors10} for (\ref{eqn:1.3})). Taking the limit, we can detect the extremal parameter $\lambda=\bar{\lambda}$ for (\ref{eqn:1.3}) to hold \cite{rs-pre}, that is, 
\begin{equation}
\bar{\lambda}=\inf\left\{\left.\frac{8\pi\calP(K_\pm)}{\left(\int_{K_\pm}\alpha \calP(d\alpha)\right)^2} \ \right| \ K\pm \subset I_\pm \cap \supp \ \calP \right\},  
 \label{eqn:optimalconst}
\end{equation}
where
\[ I_+=[0,1], \quad I_-=[-1,0] \]
and 
\[ \supp \ \calP=\{\alpha\in I \mid \mbox{$\calP(N)>0$ for any open neighborhood $N$ of $\alpha$} \}. \]
Thus we obtain 
\begin{align*}
&\lambda<\bar{\lambda} \quad \Rightarrow \quad \inf_{v\in E} J_\lambda(v)>-\infty  \\
&\lambda>\bar{\lambda} \quad \Rightarrow \quad \inf_{v\in E} J_\lambda(v)=-\infty. 
\end{align*}

The inequality 
\begin{equation} 
\inf_{v\in E} J_{\bar{\lambda}}(v)>-\infty 
 \label{eqn:1.6h}
\end{equation} 
however, is open, although (\ref{eqn:1.6h}) is the case if $\calP$ is atomic \cite{sw05}.  Here we take the fundamental assumption 
\begin{equation} 
\supp \ \calP\subset I_+ 
 \label{eqn:1.6}
\end{equation} 
and approach the problem as follows. 

Namely, given $\lambda_k\uparrow\bar{\lambda}$, we have a minimizer $v_k\in E$ of $J_\lambda$, that is,  
\begin{equation*}
\inf_{v\in E}J_{\lambda_k}(v)=J_{\lambda_k}(v_k). 
\end{equation*}
If 
\begin{equation} 
\limsup_{k\rightarrow\infty}J_{\lambda_k}(v_k)>-\infty
 \label{eqn:1.9h}
\end{equation} 
we have 
\[
J_{\bar{\lambda}}(v)=\lim_{k\rightarrow\infty}J_{\lambda_k}(v)\geq\limsup_{k\rightarrow\infty}J_{\lambda_k}(v_k)>-\infty, \quad v\in E, 
\] 
and hence (\ref{eqn:1.6h}) follows. 
In the case of 
\begin{equation*} 
\sup_k\Vert v_k\Vert_\infty<+\infty, 
\end{equation*} 
inequality (\ref{eqn:1.9h}) is valid since $v=v_k$ is a solution to (\ref{eqn:SawadaSuzukiModel}) for $\lambda=\lambda_k$. 

Assuming the contrary, we use a result of \cite{ors10} concerning the non-compact solution sequence $\{(\lambda_k,v_k)\}$ to (\ref{eqn:SawadaSuzukiModel}). Regarding (\ref{eqn:1.6}), we obtain 
\begin{align*} 
&{\cal S}\equiv \{x_0\in\Omega \mid \mbox{there exists $x_k\in\Omega$ such that} \\ 
&\qquad\qquad\qquad\quad \mbox{$x_k\rightarrow x_0$ and $v_k(x_k)\rightarrow+\infty$}\}\neq\emptyset. 
\end{align*}
This blowup set ${\cal S}$ is finite and there is $0\leq s \in L^1(\Omega)\cap L_{loc}^\infty(\Omega\setminus{\cal S})$ such that 
\begin{equation} 
\nu_{k}\equiv\lambda_k\int_{I_+}\frac{\alpha e^{\alpha v_k}}{\int_\Omega e^{\alpha v_k}}\calP(d\alpha) \overset{\ast}{\rightharpoonup} \nu\equiv s+\sum_{x_0\in{\cal S}}n(x_0)\delta_{x_0} \quad \mbox{in ${\cal M}(\Omega)$} 
 \label{eqn:concentration}
\end{equation}
with $n(x_0)\geq 4\pi$ for each $x_0\in{\cal S}$, where $\delta_{x_0}$ denotes the Dirac measure centered at $x_0$ and ${\cal M}(\Omega)$ is the space of measures identified with the dual space of $C(\Omega)$.  Under these preparations our main result is stated as follows. 

\begin{thm}
Inequality (\ref{eqn:1.6h}) holds under the conditions (\ref{eqn:1.6}) and $s=0$ in (\ref{eqn:concentration}). 
 \label{thm:main}
\end{thm}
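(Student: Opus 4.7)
\emph{Proof plan.} I would argue by contradiction, assuming $\limsup_{k\to\infty} J_{\lambda_k}(v_k) = -\infty$. As pointed out just before the theorem, a uniform bound $\sup_k \|v_k\|_\infty < \infty$ would force $J_{\lambda_k}(v_k)$ to stay bounded below via the Euler--Lagrange equation, so necessarily $\|v_k\|_\infty \to \infty$. Then the blowup structure (\ref{eqn:concentration}) applies with $\mathcal{S}\neq\emptyset$ and, by hypothesis, $s=0$, so the limit measure $\nu$ is purely atomic on the finite set $\mathcal{S}$.

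The first step is a Pohozaev-type identity: multiplying the Euler--Lagrange equation by $v_k$, integrating over $\Omega$, and using $\int_\Omega v_k=0$, one gets
\[
\int_\Omega |\nabla v_k|^2 = \lambda_k \int_{I_+} \alpha \, \frac{\int_\Omega v_k e^{\alpha v_k}}{\int_\Omega e^{\alpha v_k}} \, \calP(d\alpha),
\]
so that
\[
J_{\lambda_k}(v_k) = \lambda_k \int_{I_+} \left[ \frac{\alpha}{2} \cdot \frac{\int_\Omega v_k e^{\alpha v_k}}{\int_\Omega e^{\alpha v_k}} - \log\int_\Omega e^{\alpha v_k} \right] \calP(d\alpha).
\]
The task is to bound this quantity from below, uniformly in $k$, using the blowup profile.

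The second step is the blowup analysis at each $x_0 \in \mathcal{S}$. Working in isothermal coordinates on a small disk $B(x_0,r)$ and rescaling by the blowup rate $\epsilon_k = e^{-v_k(x_k)/2}$ at a concentration point $x_k\to x_0$, the rescaled functions converge to a standard Liouville bubble. The variant of the Y.Y.~Li pointwise estimate announced in the abstract should then give a sharp control of $v_k$ by the bubble profile throughout $B(x_0,r)$, with an additive $O(1)$ error that is uniform in $\alpha \in I_+$. Combined with $s=0$ (which forces $v_k\to -\infty$ away from $\mathcal{S}$ and rules out any additional regular mass), this yields asymptotic expansions of the form
\[
\log \int_\Omega e^{\alpha v_k} = \alpha\, c_k + \phi(\alpha) + o(1), \qquad \frac{\int_\Omega v_k e^{\alpha v_k}}{\int_\Omega e^{\alpha v_k}} = c_k + \psi(\alpha) + o(1),
\]
uniformly in $\alpha$, where $c_k\to +\infty$ is a common blowup height (weighted sum of the local heights at each $x_0 \in \mathcal{S}$) and $\phi,\psi$ are bounded functions of $\alpha$ determined by the bubble profiles and the Green function.

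The third step plugs these expansions into the identity of the first step. The divergent $c_k$-contributions collapse into a single coefficient
\[
c_k \cdot \lambda_k \int_{I_+} \left(\frac{\alpha}{2} - \alpha\right) \calP(d\alpha) = -\frac{\lambda_k c_k}{2} \int_{I_+} \alpha \, \calP(d\alpha),
\]
which has the wrong sign. To cancel it one must exploit mass quantization: summing the Li-type local estimates at every $x_0\in\mathcal{S}$ expresses $c_k$ in terms of $\int_\Omega |\nabla v_k|^2$ and the atomic masses $n(x_0)\geq 4\pi$, and the criticality identity $\lambda_k\to\bar\lambda$ with the infimum formula (\ref{eqn:optimalconst}) forces the dominant terms to combine into a bounded quantity, leaving the bracketed expression $O(1)$. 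This produces a contradiction with $J_{\lambda_k}(v_k)\to -\infty$.

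\textbf{Main obstacle.} The hard part is setting up the Y.Y.~Li pointwise estimate when the nonlinearity is averaged against a continuous probability measure $\calP(d\alpha)$ with $\supp\calP\subset I_+$. In the atomic case \cite{sw05} each component is a genuine Liouville equation, but here $e^{\alpha v_k}$ concentrates with an $\alpha$-dependent rate at the same point $x_0$, and one must track these simultaneously to extract a single bubble that controls the full family uniformly in $\alpha$. Transferring the residual vanishing $s=0$ into uniform-in-$\alpha$ exponential localization of $e^{\alpha v_k}$ outside $\mathcal{S}$, and then combining the Li-type expansions at all blowup points with the criticality (\ref{eqn:optimalconst}) so that the divergent contributions cancel, is the main technical work.
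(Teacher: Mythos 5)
Your overall skeleton (rewrite $J_{\lambda_k}(v_k)$ via the Euler--Lagrange equation, run a blowup analysis with a Y.Y.~Li pointwise estimate uniform in $\alpha$, and plug the resulting expansions back in) is the same as the paper's, and your first-step identity is exactly Lemma \ref{lem:4} in disguise. But your third step contains a genuine gap, and it sits precisely where the real work of the paper is. First, the ansatz $\log\int_\Omega e^{\alpha v_k}=\alpha c_k+\phi(\alpha)+o(1)$ with $\phi$ bounded is wrong: the correct expansion (which follows from the Li-type estimate evaluated at a fixed reference point, see (\ref{eqn:mean-wka})) is
\[
\log\int_\Omega e^{\alpha v_k}=-\bar w_{k,\alpha}=\Bigl(\tfrac{\alpha\tgamk}{2}-1\Bigr)w_k(0)+O(1),
\]
which carries a \emph{divergent $\alpha$-independent} term $-w_k(0)$ coming from the normalization $\int_\Omega e^{\wka}=1$. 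With this correction the bracketed integrand becomes $(1-\alpha\tgamk/4)\,w_k(0)+O(1)$, not $-\alpha c_k/2+O(1)$; the divergence you flag as "the wrong sign" is an artifact of the missing term rather than something that "mass quantization" must rescue.

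Second, even with the correct expansion, boundedness of $J_{\lambda_k}(v_k)$ hinges on the exact identity $\int_{I_+}(1-\alpha\tgam/4)\,\calP(d\alpha)=0$, i.e. on knowing the precise concentrated mass $\tgam=4/\int_{I_+}\beta\,\calP(d\beta)$ (equivalently $\tpsi\equiv 1$ $\calP$-a.e., Proposition \ref{pro:7}), together with the one-sided bound $\tgamk\le\tgam$ to control the error. The general quantization $n(x_0)\ge 4\pi$ and the formula (\ref{eqn:optimalconst}) are nowhere near enough: a priori the rescaled limit could lose mass for a positive-$\calP$-measure set of intensities $\beta$ (i.e. $\tpsi<1$ on a set of positive measure), and ruling this out occupies Sections \ref{sec:mass}--\ref{sec:mass-continued} of the paper, via the Pohozaev identity (\ref{eqn:2.29}), a bathtub-type extremal characterization (Lemma \ref{lem:2-h}), the exclusion of a second concentration scale (Lemmas \ref{lem:new-d}--\ref{lem:new-g}), and the extremality of $\bar\lambda$ through (\ref{eqn:1.13}). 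Your proposal does not identify this step at all, and it --- rather than the Li estimate itself --- is the main obstacle; note also that Proposition \ref{pro:4} reduces $\cal S$ to a single point under residual vanishing, so the summation over several blowup points you envisage does not arise.
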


Henceforth, we put 
\begin{equation}
\amin=\inf_{\alpha \in \supp \ \calP}\alpha.
 \label{eqn:amin}
\end{equation}

Here we have a note concerning the assumption made in the above theorem, that is, $s=0$ in (\ref{eqn:concentration}), 
which we call the {\it residual vanishing}. 
This condition is actually satisfied under a suitable assumption on $\calP$. 
\begin{pro}[\cite{sz-rims} Theorem 3] 
Residual vanishing, $s=0$, occurs to the above $\{ v_k\}$ if $\alpha_{min}>1/2$. 
 \label{pro:2}
\end{pro} 
An immediate consequence is the following theorem. 
\begin{thm}
We have (\ref{eqn:1.6h}) under $\alpha_{min}>1/2$. 
 \label{cor:2}
\end{thm}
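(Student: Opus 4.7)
The plan is to obtain Theorem \ref{cor:2} as an immediate corollary of Theorem \ref{thm:main} combined with Proposition \ref{pro:2}; no substantive new argument is required, only a verification that both hypotheses of Theorem \ref{thm:main} are in force under the assumption $\amin > 1/2$.

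First I would check the support condition (\ref{eqn:1.6}). By the definition (\ref{eqn:amin}), every $\alpha \in \supp\,\calP$ satisfies $\alpha \geq \amin > 1/2 > 0$, and since $\supp\,\calP \subset I = [-1,1]$ by assumption on $\calP$, we conclude $\supp\,\calP \subset (1/2,1] \subset I_+$. Hence (\ref{eqn:1.6}) is automatic.

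Next I would apply the procedure described in the introduction: choose $\lambda_k \uparrow \bar\lambda$, take the associated minimizers $v_k \in E$ of $J_{\lambda_k}$, and consider the dichotomy. If $\sup_k \Vert v_k\Vert_\infty < +\infty$, then (\ref{eqn:1.9h}) holds immediately because each $v_k$ solves (\ref{eqn:SawadaSuzukiModel}) at $\lambda = \lambda_k$, which already yields (\ref{eqn:1.6h}). Otherwise blowup occurs and the concentration representation (\ref{eqn:concentration}) applies; here Proposition \ref{pro:2}, which requires precisely $\amin > 1/2$, guarantees the residual vanishing $s = 0$.

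With both hypotheses of Theorem \ref{thm:main} (namely (\ref{eqn:1.6}) and $s = 0$) verified, Theorem \ref{thm:main} directly yields (\ref{eqn:1.6h}). There is no real obstacle in this deduction; the mathematical depth lies entirely in the proofs of Theorem \ref{thm:main} (the Y.Y. Li-type estimate argument announced in the abstract) and of Proposition \ref{pro:2} (established in \cite{sz-rims}), both of which are invoked as black boxes here.
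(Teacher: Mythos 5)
Your proposal is correct and is exactly the paper's (implicit) argument: the authors present Theorem \ref{cor:2} as "an immediate consequence" of Theorem \ref{thm:main} together with Proposition \ref{pro:2}, which is precisely the deduction you spell out. Your explicit verification that $\amin>1/2$ forces $\supp\,\calP\subset I_+$, and your handling of the bounded/unbounded dichotomy before invoking the residual vanishing, fill in the routine details the paper leaves to the reader.
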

So far, there is no known inequality (\ref{eqn:1.6h}) for continuous $\calP(d\alpha)$ except for Theorem \ref{cor:2}. 

Residual vanishing implies the following property used in the proof of Theorem \ref{thm:main}.  
\begin{pro}[\cite{sz-rims} Lemma 3]
If the residual vanishing occurs to the above $\{ v_k\}$ then it holds that 
\begin{equation} 
\sharp{\cal S}\leq 1,\quad \bar{\lambda}=\frac{8\pi}{\left(\int_{I_+} \alpha\calP(d\alpha)\right)^2}. 
 \label{eqn:1.13}
\end{equation} 
 \label{pro:4}
\end{pro} 

Theorem \ref{cor:2} contains the classical case of the Trudinger-Moser inequality for $\calP=\delta_{+1}$. 
We shall show a variant of Y.Y. Li's estimate \cite{yyl99}, which is the key of the proof of Theorem \ref{thm:main}. 
As we see later on, it takes the form that is weaker than the estimate shown for $\calP=\delta_{+1}$ in \cite{yyl99}. 

To state the result, let 
\begin{equation} 
\wka(x)=\alpha v_k(x)-\log\int_\Omega e^{\alpha v_k}, \quad k\in{\bf N}, \ \alpha\in I_+\setminus\{0\},  
 \label{eqn:1.13h}
\end{equation} 
which satisfies 
\begin{equation} 
-\Delta \wka=\alpha\lambda_k\int_{I_+}\beta\left( e^{\wkb}-\frac{1}{\vert\Omega\vert}\right)\calP(d\beta) \quad \mbox{on $\Omega$}, \quad \int_\Omega e^{\wka}=1. 
 \label{eqn:eqnw}
\end{equation} 
Regarding (\ref{eqn:1.13}), we put ${\cal S}=\{x_0\}$. There exists $x_k\in\Omega$ such that 
\[ x_k\rightarrow x_0, \quad v_k(x_k)=\max_\Omega v_k, \quad \wka(x_k)=\max_\Omega \wka. \]
Here we take an isothermal chart $(U_k,\Psi_k)$ satisfying 
\[ \Psi_k(x_k)=0\in{\bf R}^2, \quad g=e^{\xi_k(X)}(dX_1^2+dX_2^2), \quad \xi_k(0)=0. \] 
Then it holds that 
\begin{equation}
-\Delta_X \wka=e^{\xi_k}\left(\alpha\lambda_k \int_{I_+} \beta\left(e^{\wkb}-\frac{1}{|\Omega|}\right)\calP(d\beta)\right) \quad \mbox{in $\Psi_k(U_k)$}, 
 \label{eqn:eqnw'}
\end{equation}
where 
\[ \wka(X)=\wka\circ\Psi^{-1}_k(X). \]

Henceforth, we shall write $X$ by the same notation $x$ for simplicity. 
Also, we do not distinguish any sequences with their subsequences. 
Under this agreement we have $x_k=x_0=0$. 
Moreover, there exists $R_0>0$ such that $B_{3R_0}\subset\subset\Psi_k(U_k)$ and $0$ is the maximizer of $\wka$ in $\bar{B}_{3R_0}$. 

The estimate is now stated in the following proposition. 

\begin{pro} 
Under the assumptions of Theorem \ref{thm:main} it holds that 
\begin{equation} 
\wka(x)-\wka(0)=-\alpha\left(\gamma_0+o(1)\right)\log(1+e^{w_{k,1}(0)/2}\vert x\vert)+O(1) 
 \label{eqn:1.16}
\end{equation} 
as $k\rightarrow\infty$ uniformly in $x\in B_{R_0}$ and $\alpha\in I_+\setminus\{0\}$, where
\[
\gamma_0=\frac{4}{\int_{I_+}\beta \calP(d\beta)}. 
\]
 \label{pro:yyl-type-estimate}
\end{pro}

To conclude this section, we shall describe a sketch of the proof of Theorem \ref{thm:main}. 
The first step is the blowup analysis. 
We put
\[
\twka(x)=\wka(\sigk x)+2\log\sigk,\quad 
\sigk=e^{-\wk(0)/2}\rightarrow 0,\quad
\twk(x)=\tw_{k,1}(x), 
\]
and get 
\begin{align*}
&-\Delta\twka=\alpha (\tfk-\tdk e^{\txik}), \quad \twka\leq \twka(0)\leq\twk(0)=0 \quad \mbox{in $B_{R_0/\sigk}$} \\
&\int_{B_{R_0/\sigk}}e^{\twka+\tilde{\xi}_k}\leq 1, \quad \int_{B_{R_0/\sigk}}\tfk\leq\lambda_k\int_{I_+}\beta\calP(d\beta)
\end{align*} 
for each $\alpha\in I_+\setminus\{0\}$, where 
\[
\tfk=\lambda_k\int_{I_+}\beta e^{\twkb+\txik}\calP(d\beta),\quad 
\tdk=\frac{\sigk^2\lambda_k\int_{I_+}\beta\calP(d\beta)}{\vert \Omega\vert},\quad 
\txik(x)=\xi(\sigk x). 
\]
The compactness argument assures the existence of $\tw=\tw(x)$ and $\tf=\tf(x)$ such that 
\[
\twk\rightarrow\tw,\quad \tilde f_k\rightarrow\tf \quad \mbox{in $C^2_{loc}({\bf R}^2)$} 
\]
and 
\begin{align*}
&-\Delta\tw=\tf\not\equiv 0,\quad \tw\leq\tw(0)=0,\quad 0\leq \tf\leq\bar{\lambda}\int_{I_+}\beta\calP(d\beta) \quad \mbox{in ${\bf R}^2$} \\
&\int_{{\bf R}^2}e^{\tw}\leq 1, \quad \int_{{\bf R}^2}\tf\leq \bar{\lambda}\int_{I_+}\beta\calP(d\beta).
\end{align*}
Next we focus on the quantity
\[
\tgam=\frac{1}{2\pi}\int_{{\bf R}^2}\tf. 
\]
Given a bounded open set $\omega\subset{\bf R}^2$, we have 
\[
\int_{I_+}\left(\int_\omega e^{\twkb+\txik}dx\right)\calP(d\beta)\leq 1, 
\]
and hence there exists $\tilde{\zeta}^\omega=\tilde{\zeta}^\omega(d\beta)\in {\cal M}(I_+)$ such that 
\[
\left(\int_\omega e^{\twkb+\tilde{\xi}_n}dx\right)\calP(d\beta)
\overset{\ast}{\rightharpoonup}\tilde{\zeta}^\omega(d\beta) \quad \mbox{in ${\cal M}(I_+)$}. 
\]
For this limit measure, we can show the absolute continuity with respect to $\calP$, equivalently, the existence of $\tpsi^\omega\in L^1(I_+,\calP)$ such that 
$0\leq \tpsi^\omega\leq 1$ $\calP$-a.e. on $I_+$ and 
\[
\tilde{\zeta}^\omega(\eta)=\int_\eta \tpsi^\omega(\beta)\calP(d\beta)
\]
for any Borel set $\eta\subset I_+$. 
Taking $R_j\uparrow +\infty$ and putting $\omega_j=B_{R_j}$, we have $\tilde{\zeta}\in {\cal M}(I_+)$ and $\tpsi\in L^1(I_+,\calP)$ such that 
\begin{align*}
&0\leq \tpsi(\beta)\leq 1, \quad \mbox{$\calP$-a.e. $\beta$} \\
&0\leq \tpsi^{\omega_1}(\beta) \leq \tpsi^{\omega_2}(\beta) \leq \cdots \rightarrow \tpsi(\beta), \quad \mbox{$\calP$-a.e. $\beta$} \\
&\tilde{\zeta}(\eta)=\int_\eta \tpsi(\beta)\calP(d\beta) \quad\mbox{for any Borel set $\eta\subset I_+$}
\end{align*}
by the monotonicity of $\tpsi^\omega$ with respect to $\omega$. 
Since it holds that 
\[
\bar{\lambda}\int_{I_+}\beta \tpsi^{\omega_j}(\beta)\calP(d\beta)
=\lim_{k\rightarrow\infty}\lambda_k\int_{I_+}\beta\left( \int_{\omega_j}e^{\twkb+\txik}dx\right)\calP(d\beta)
=\int_{\omega_{j}}\tilde f, 
\] 
we have 
\[
\tgam=\frac{1}{2\pi}\int_{{\bf R}^2}\tilde f=\frac{\bar{\lambda}}{2\pi}\int_{I_+}\beta\tpsi(\beta) \ \calP(d\beta)
 \label{eqn:2-1-intro}
\]
by the monotone convergence theorem. 
Furthermore, we use the Pohozaev identity and the behavior of $\tilde{w}$ at infinity to obtain 
\begin{equation}
-\pi\tgam^2=-2\bar{\lambda}\int_{I_+}\tpsi(\beta)\calP(d\beta). 
 \label{eqn:gamma-intro}
\end{equation}
More precisely, the following property holds. 
\begin{pro} 
It holds that 
\begin{equation} 
\tgam=\frac{4}{\int_{I_+}\beta\calP(d\beta)}, 
 \label{eqn:2.17}
\end{equation}
in other words, 
\begin{equation}
\tpsi=1\quad \mbox{$\calP$-a.e. on $I_+$.}
 \label{eqn:2.17'}
\end{equation}
 \label{pro:7}
\end{pro} 
Note that (\ref{eqn:2.17}) and (\ref{eqn:2.17'}) are equivalent by (\ref{eqn:gamma-intro}) and (\ref{eqn:1.13}). 
By virtue of Proposition \ref{pro:7}, 
we can apply the method of \cite{csl07} to prove Proposition \ref{pro:yyl-type-estimate}. 
Finally, we use Proposition \ref{pro:yyl-type-estimate} and the another representation of $J_{\lambda_k}(v_k)$ denoted by 
\begin{align*} 
J_{\lambda_k}(v_k)&=\frac{\lambda_k}{2}\left\{\int_{I_+}(\bar{w}_{k,\alpha}+\wka(0))\calP(d\alpha) \right. \\ 
&\quad \left. +\int_{I_+}\calP(d\alpha)\int_\Omega (\wka(x)-\wka(0))e^{\wka(x)}dx\right\} 
\end{align*} 
to show (\ref{eqn:1.9h}), where $\bar{w}_{k,\alpha}=\frac{1}{|\Omega|}\int_\Omega \wka$. \\ 

This paper consists of five sections and Appendix. 
Sections \ref{sec:mass} and \ref{sec:mass-continued} are devoted to the preliminary and the proof of Proposition \ref{pro:7}, respectively. 
Then, we prove Proposition \ref{pro:yyl-type-estimate} in Section \ref{sec:yyl}. 
The proof of Theorem \ref{thm:main} is provided in Section \ref{sec:proof}. 
An auxiliary lemma in Section \ref{sec:mass} is shown in Appendix. 

\section{Preliminaries}\label{sec:mass}

We start with the following monotonicity properties. 

\begin{lem} 
For $\alpha\in I_+$, we have 
\begin{align}
&\frac{d}{d\alpha}\wka(0)\geq 0
 \label{eqn:monotone-0} \\ 
&\frac{d}{d\alpha}\int_\Omega e^{\alpha v_k}\geq 0. 
 \label{eqn:monotone-1}
\end{align}
\end{lem}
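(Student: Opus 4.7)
The plan is to reduce both monotonicity statements to elementary manipulations of the scalar function $\phi(\alpha) := \int_\Omega e^{\alpha v_k}\,dx$, using only two ingredients: the mean-zero normalization $\int_\Omega v_k = 0$ built into the admissible class $E$, and the fact that after the isothermal chart identification the origin is a maximizer of $v_k$ on $\Omega$.

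First I would establish (\ref{eqn:monotone-1}). Differentiating under the integral twice gives
\[
\phi'(\alpha) = \int_\Omega v_k\, e^{\alpha v_k}\,dx, \qquad \phi''(\alpha) = \int_\Omega v_k^2\, e^{\alpha v_k}\,dx \geq 0,
\]
so $\phi$ is convex on $\mathbb{R}$. Since $v_k \in E$, we have $\phi'(0) = \int_\Omega v_k = 0$. Convexity then forces $\phi'(\alpha) \geq \phi'(0) = 0$ for every $\alpha \geq 0$, which is exactly (\ref{eqn:monotone-1}).

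For (\ref{eqn:monotone-0}), I would differentiate $\wka(0) = \alpha v_k(0) - \log \phi(\alpha)$ in $\alpha$ to obtain
\[
\frac{d}{d\alpha}\wka(0) = v_k(0) - \frac{\int_\Omega v_k\, e^{\alpha v_k}\,dx}{\int_\Omega e^{\alpha v_k}\,dx}.
\]
The subtracted term is the expectation of $v_k$ against the probability measure $e^{\alpha v_k}\,dx/\phi(\alpha)$, hence bounded above by $\max_\Omega v_k = v_k(0)$, yielding the required nonnegativity. No real obstacle arises: both estimates are one-line consequences once $\phi$ is introduced, and the only point worth noting is that the identification $x_k = 0$ is made for $v_k$ itself and is therefore independent of $\alpha$, so $v_k(0) = \max_\Omega v_k$ provides the correct upper bound uniformly in $\alpha \in I_+$.
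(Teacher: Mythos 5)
Your proof is correct and follows essentially the same route as the paper: differentiate under the integral and use $\int_\Omega v_k=0$ together with the fact that $0$ is the maximizer of $v_k$. Your versions of the two estimates (convexity of $\phi(\alpha)=\int_\Omega e^{\alpha v_k}$ with $\phi'(0)=0$ for the second inequality, and bounding the weighted average of $v_k$ directly by $\max_\Omega v_k=v_k(0)$ for the first) are, if anything, slightly cleaner than the paper's, which instead uses the pointwise inequality $s(e^{\alpha s}-1)\geq 0$ and a detour through the set $\{v_k>0\}$.
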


\begin{proof}
We calculate 
\begin{align*}
\frac{d}{d\alpha}\wka(0)
&=v_k(0)-\frac{\int_\Omega v_k e^{\alpha v_k}}{\int_\Omega e^{\alpha v_k}}
\geq v_k(0)-\frac{\int_{\{v_k>0\}} v_k e^{\alpha v_k}}{\int_\Omega e^{\alpha v_k}} \\
&\geq v_k(0)\left(1-\frac{\int_{\{v_k>0\}} e^{\alpha v_k}}{\int_\Omega e^{\alpha v_k}}\right)
\geq 0
\end{align*}
for $k$ and $\alpha\in I_+$, recalling that $0$ is the maximizer of $v_k$, and 
\[
\frac{d}{d\alpha}\int_\Omega e^{\alpha v_k}=\int_\Omega v_k e^{\alpha v_k}=\int_\Omega v_k(e^{\alpha v_k}-1)\geq 0
\]
by using $\int_\Omega v_k=0$ and $s(e^{\alpha s}-1)\geq 0$ which is true for $s\in{\bf R}$ and $\alpha\geq 0$. 
\end{proof}

Henceforth, we put 
\[
w_k(x)=w_{k,1}(x). 
\]
It follows from (\ref{eqn:monotone-0}) that 
\begin{equation} 
w_{k,1}(0)=\max_{\alpha\in I_+} \wka(0). 
 \label{eqn:1.14}
\end{equation} 
The following lemma is the starting point of our blowup analysis. 

\begin{lem}
For every $\alpha\in I_+\setminus\{0\}$, it holds that 
\[ \wka(0)=\max_{\overline{B_{3R_0}}}\wka\rightarrow +\infty. \] 
 \label{lem:2-1}
\end{lem}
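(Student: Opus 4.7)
The statement has two parts: the maximum identity $\wka(0)=\max_{\bar{B}_{3R_0}}\wka$ and the divergence $\wka(0)\to+\infty$.

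The maximum identity is immediate from the setup. For each fixed $\alpha>0$, $\wka=\alpha v_k-\log\int_\Omega e^{\alpha v_k}$ is a strictly increasing affine transformation of $v_k$, hence attains its maximum on $\Omega$ at the same point as $v_k$, namely $x_k$. In the isothermal chart $x_k=0$, and since $\bar{B}_{3R_0}\subset\subset\Psi_k(U_k)$, the global maximum on $\Omega$ coincides with the maximum over $\bar{B}_{3R_0}$.

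For the divergence, my plan is to first reduce the general case $\alpha\in(0,1]$ to the case $\alpha=1$ via H\"older's inequality. Applying H\"older with exponents $1/\alpha$ and $1/(1-\alpha)$ to $\int_\Omega (e^{v_k})^\alpha\cdot 1^{1-\alpha}$ yields
\[
\log\int_\Omega e^{\alpha v_k}\leq\alpha\log\int_\Omega e^{v_k}+(1-\alpha)\log|\Omega|,
\]
whence
\[
\wka(0)=\alpha v_k(0)-\log\int_\Omega e^{\alpha v_k}\geq\alpha\wk(0)-(1-\alpha)\log|\Omega|.
\]
Since $I_+=[0,1]$, it therefore suffices to prove $\wk(0)=w_{k,1}(0)\to+\infty$.

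For this remaining divergence I would argue by contradiction. Suppose $\wk(0)\leq M$ along a subsequence; then $\wk\leq M$ on $\Omega$ because $0$ maximizes $\wk$. Using $\int_\Omega v_k=0$, one computes
\[
\bar{w}_k:=\frac{1}{|\Omega|}\int_\Omega\wk=-\log\int_\Omega e^{v_k}=\wk(0)-v_k(0)\to-\infty
\]
since $v_k(0)\to+\infty$. On the other hand, the right-hand side of equation (\ref{eqn:eqnw}) for $\wk$ is uniformly bounded in $L^1(\Omega)$, and the residual vanishing $s=0$ combined with Proposition~\ref{pro:4} localizes the concentration in (\ref{eqn:concentration}) at the single point $0$. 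An elliptic oscillation estimate for $\wk-\bar{w}_k$ on the harmonic-like region $\Omega\setminus B_\epsilon$ then gives $\int_{\Omega\setminus B_\epsilon}e^{\wk}\to 0$, while the $L^\infty$-bound yields $\int_{B_\epsilon}e^{\wk}\leq e^M|B_\epsilon|$, which can be made arbitrarily small by choice of $\epsilon$. Together these contradict $\int_\Omega e^{\wk}=1$, forcing $\wk(0)\to+\infty$.

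The main obstacle is making the oscillation estimate above rigorous, because the $L^1$-norm of the right-hand side of (\ref{eqn:eqnw}) need not lie below the Brezis--Merle threshold $4\pi$; one cannot invoke a classical Brezis--Merle dichotomy on all of $\Omega$ at once. The single-point localization provided by residual vanishing is essential: it lets one treat $\Omega\setminus B_\epsilon$ as a nearly mass-free region where local Brezis--Merle-type bounds for $\wk-\bar{w}_k$ apply, confining all the mass of $e^{\wk}$ to a shrinking neighborhood of $0$. The H\"older reduction and the monotonicity argument for the maximum identity are otherwise routine.
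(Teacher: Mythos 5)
Your proof is correct and follows essentially the same route as the paper: your H\"older reduction to $\alpha=1$ is precisely the paper's inequality $e^{\wka(0)}\geq|\Omega|^{\alpha-1}e^{\alpha w_{k,1}(0)}$, and the contradiction with $\int_\Omega e^{\wk}=1$ obtained by splitting $\Omega$ into a small ball around $0$ and its complement is the paper's argument as well. The ``main obstacle'' you flag is not actually an obstacle: the paper simply invokes the local uniform boundedness of $v_k$ in $\Omega\setminus\{x_0\}$, which is part of the blow-up result of \cite{ors10} quoted in the introduction, and in any case under your contradiction hypothesis one has $\wkb(0)\leq\wk(0)\leq M$ for every $\beta$ by (\ref{eqn:1.14}), so the nonlinearity in (\ref{eqn:eqnw}) is uniformly bounded in $L^\infty$ and no Brezis--Merle threshold issue arises.
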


\begin{proof} 
Since 
$e^{\wka(0)}=e^{\alpha v_k(0)}/\int_\Omega e^{\alpha v_k}\geq |\Omega|^{\alpha-1} e^{\alpha w_{k,1}(0)}$ for $\alpha\in I_+\setminus\{0\}$, 
it suffices to show that $w_k(0)=w_{k,1}(0)\rightarrow +\infty$. 

Residual vanishing and (\ref{eqn:monotone-1}) imply $\int_\Omega e^{v_k} \rightarrow +\infty$, 
and then the local uniform boundedness of $v_k$ in $\Omega\setminus\{x_0\}$ shows that $w_k\rightarrow -\infty$ locally uniformly in $\Omega\setminus\{x_0\}$. 
Hence if $\lim_{k\rightarrow\infty} w_k(0)=\lim_{k\rightarrow\infty} \|w_k\|_{L^\infty(\Omega)}<+\infty$ then $\lim_{k\rightarrow\infty}\int_\Omega e^{w_k}=0$, 
which contradicts $\int_\Omega e^{w_k}=1$. 
\end{proof}

We put 
\[
\twka(x)=\wka(\sigk x)+2\log\sigk,\quad 
\sigk=e^{-\wk(0)/2}\rightarrow 0,\quad
\twk(x)=\tw_{k,1}(x). 
\]
The last notation is consistent under the agreement of $\wk=w_{k,1}$. 
For each $\alpha\in I_+\setminus \{0\}$ we have 
\begin{align}
&-\Delta\twka=\alpha (\tfk-\tdk e^{\txik}), \quad \twka\leq \twka(0)\leq\twk(0)=0 \quad \mbox{in $B_{R_0/\sigk}$}
 \label{eqn:scale-eqn}\\
&\int_{B_{R_0/\sigk}}e^{\twka+\tilde{\xi}_k}\leq 1, \quad \int_{B_{R_0/\sigk}}\tfk\leq\lambda_k\int_{I_+}\beta\calP(d\beta), 
 \label{eqn:2.5} 
\end{align}
where 
\begin{equation}
\tfk=\lambda_k\int_{I_+}\beta e^{\twkb+\txik}\calP(d\beta),\quad 
\tdk=\frac{\sigk^2\lambda_k\int_{I_+}\beta\calP(d\beta)}{\vert \Omega\vert},\quad \txik(x)=\xi(\sigk x).  
 \label{eqn:def-tilde-f_n}
\end{equation}

We shall use a fundamental fact of which proof is provided in Appendix. 
\begin{lem}
Given $f\in L^1\cap L^\infty({\bf R}^2)$, let 
\[ z(x)=\frac{1}{2\pi}\int_{{\bf R}^2} f(y)\log\frac{|x-y|}{1+|y|}dy. \] 
Then, it holds that 
\[ \lim_{\vert x\vert\rightarrow+\infty}\frac{z(x)}{\log|x|}=\gamma\equiv \frac{1}{2\pi}\int_{{\bf R}^2}f. \]
 \label{lem:2-a}
\end{lem}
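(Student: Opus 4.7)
The plan is to reduce the claim to showing
\[ z(x) - \gamma \log|x| \;=\; \frac{1}{2\pi} \int_{{\bf R}^2} f(y) \log\frac{|x-y|}{|x|(1+|y|)}\,dy \;=\; o(\log|x|) \]
as $|x|\to\infty$, and to estimate the right-hand side by splitting ${\bf R}^2$ into three annular regions in terms of $R=|x|$: an inner region $A=\{|y|\leq R/2\}$, an intermediate region $B=\{R/2<|y|\leq 2R\}$, and an outer region $C=\{|y|>2R\}$.

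In $A$ one has $|x-y|/|x|\in[1/2,3/2]$, so $\log(|x-y|/|x|)$ is uniformly bounded and the contribution reduces, up to $O(\|f\|_1)$, to $-F(R/2)$ where $F(R):=\int_{|y|\leq R} f(y)\log(1+|y|)\,dy$. On $C$ a direct estimate yields $|x-y|/(1+|y|)\in[1/3,3/2]$ (for $|y|\geq 2$), so $\log\frac{|x-y|}{|x|(1+|y|)}=O(1)-\log|x|$, and the contribution is bounded by $\log|x|\cdot\int_{|y|>2R}|f|+O(\|f\|_1)=o(\log|x|)$. On $B$ I would split further according to whether $|x-y|<1$ or $|x-y|\geq 1$: in the former case $f\in L^\infty$ combined with the local integrability of $\log$ handles the singularity, while in the latter $|\log|x-y||\leq\log(3|x|)$ combines with $\int_B|f|\to 0$ to give $o(\log|x|)$; the remaining pieces $\log|x|$ and $\log(1+|y|)\leq\log(1+2|x|)$ are each multiplied by $\int_B|f|\to 0$.

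The crux is the inner region: one needs $F(R)=o(\log R)$, which is not a priori obvious since $F$ may well diverge as $R\to\infty$ (for instance when $f(y)\sim|y|^{-2}(\log|y|)^{-2}$ near infinity one has $\int f(y)\log(1+|y|)\,dy=+\infty$). The $L^1$ hypothesis rescues us: given $\varepsilon>0$, choose $M$ with $\int_{|y|\geq M}|f|<\varepsilon$, then split at $|y|=M$ to get $|F(R)|\leq\log(1+M)\|f\|_1+\varepsilon\log(1+R)$, whence $\limsup_{R\to\infty}|F(R)|/\log R\leq\varepsilon$, and letting $\varepsilon\downarrow 0$ yields the needed decay.

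The main obstacle I anticipate is precisely this inner region. A naive approach would split the defining integral of $z$ into $\int f(y)\log|x-y|\,dy$ and $\int f(y)\log(1+|y|)\,dy$, but under the sole hypothesis $f\in L^1\cap L^\infty$ neither piece need converge individually; only their difference does. Keeping the two logarithms combined as in the displayed formula above, and carrying the $L^1$-truncation argument over to the cumulative integral $F(R)$ rather than $F(\infty)$, is what makes the proof go through.
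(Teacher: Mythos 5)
Your proof is correct, and it rests on the same two pillars as the paper's (the $L^\infty$ bound tames the logarithmic singularity at $y=x$; the $L^1$ tail controls everything far out), but the decomposition is genuinely different. The paper first isolates the singularity on $\{|y-x|\le 1\}$ and then cuts the remainder at a \emph{fixed} radius $K=K_\varepsilon$ chosen so that $\int_{|y|>K}f<\varepsilon$: on $\{|y|\le K,\ |y-x|>1\}$ the normalized kernel tends to $0$ uniformly as $|x|\to\infty$, while on $\{|y|>K,\ |y-x|>1\}$ it is bounded by $3$ via $\frac{1}{2+|x|}\le\frac{|x-y|}{1+|y|}\le 1+|x|$, so that region contributes at most $3\varepsilon$. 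You instead cut at the moving radii $|x|/2$ and $2|x|$, which forces you to prove the auxiliary fact $F(R)=\int_{|y|\le R}f(y)\log(1+|y|)\,dy=o(\log R)$ on the inner region; the truncation at $|y|=M$ you use there is the paper's fixed-$K$ cut in disguise. The paper's arrangement never meets the possibly divergent integral $\int f\log(1+|y|)$, because on its bounded middle region $\log(1+|y|)\le\log(1+K)$ is just a constant; your arrangement makes the danger you correctly identified (that $\int_{{\bf R}^2} f(y)\log(1+|y|)\,dy$ can be $+\infty$ for $f\in L^1\cap L^\infty$, so the naive two-term split fails) explicit and then defuses it, which is a nice expository point. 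A minor bonus of your version: you work with $|f|$ throughout and so never need $f\ge 0$, whereas the paper's write-up tacitly uses nonnegativity (e.g.\ in dropping absolute values on $\int_{D_1(x)}f$ and on $\int_{D_{3,K}}f$); this is harmless in the application, where $f=\tilde f\ge 0$, but your argument matches the lemma as literally stated.
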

The following lemma is also classical (see \cite{pw} p. 130).  
\begin{lem}
If $\phi=\phi(x)$ is a harmonic function on the whole space ${\bf R}^2$ such that 
\begin{equation*}
\phi(x)\leq C_{\const\insb}(1+\log|x|), \quad x\in {\bf R}^2\setminus B_1
\end{equation*}
then it is a constant function. 
 \label{lem:2-b}
\end{lem}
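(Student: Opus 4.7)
The plan is to adapt the classical Liouville argument for harmonic functions on the plane to the setting of one-sided logarithmic growth. The driving idea is that the gradient of a harmonic function at any point can be recovered from a boundary integral over an arbitrary enclosing circle, and that one is free to subtract a constant from the integrand without changing the value. Subtracting the supremum $M_R:=\sup_{\partial B_R(x_0)}\phi$ turns the one-sided hypothesis into a genuinely signed estimate on $\phi-M_R$, which is precisely what allows $|\partial_i\phi(x_0)|$ to be bounded using only an upper bound on $\phi$.

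Concretely, I would fix $x_0\in{\bf R}^2$ and $R>0$ and start from the representation
$$\partial_i\phi(x_0)=\frac{1}{\pi R^2}\int_{\partial B_R(x_0)}\nu_i(x)\,\phi(x)\,dS(x),$$
obtained by combining the harmonicity of $\partial_i\phi$ with the divergence theorem applied to $\phi\,e_i$. Since $\int_{\partial B_R(x_0)}\nu_i\,dS=0$, the integrand may be shifted by $M_R$, after which $\phi-M_R\le 0$ on $\partial B_R(x_0)$. Passing to absolute values and invoking the mean value property for $\phi$ then yields
$$|\partial_i\phi(x_0)|\le\frac{1}{\pi R^2}\int_{\partial B_R(x_0)}(M_R-\phi)\,dS=\frac{2(M_R-\phi(x_0))}{R}.$$

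The hypothesis gives $M_R\le C(1+\log(|x_0|+R))$ for all sufficiently large $R$, so the right-hand side is $O((\log R)/R)$ and tends to $0$ as $R\to\infty$. Since $x_0\in{\bf R}^2$ and $i\in\{1,2\}$ were arbitrary, $\nabla\phi\equiv 0$, and therefore $\phi$ is constant. I do not foresee any real obstacle; the only nontrivial move is the subtraction of $M_R$, which is exactly the device that converts a one-sided growth estimate into effective control of the gradient. Everything else reduces to standard two-dimensional harmonic function theory, so the argument is essentially self-contained and short.
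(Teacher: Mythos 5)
Your argument is correct. Note that the paper does not actually prove this lemma: it simply labels it as classical and cites Protter--Weinberger \cite{pw}, p.~130, so there is no in-paper proof to compare against. Your self-contained derivation is the standard gradient-estimate route to the one-sided Liouville theorem, and every step checks out: the identity $\partial_i\phi(x_0)=\frac{1}{\pi R^2}\int_{\partial B_R(x_0)}\phi\,\nu_i\,dS$ follows from the mean value property for the harmonic function $\partial_i\phi$ together with the divergence theorem, the subtraction of $M_R$ is legitimate because $\int_{\partial B_R(x_0)}\nu_i\,dS=0$, and the surface mean value property converts $\int_{\partial B_R(x_0)}(M_R-\phi)\,dS$ into $2\pi R\,(M_R-\phi(x_0))$, giving $|\partial_i\phi(x_0)|\le 2(M_R-\phi(x_0))/R=O((\log R)/R)\to 0$. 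The only implicit requirement is $R>|x_0|+1$ so that $\partial B_R(x_0)\subset{\bf R}^2\setminus B_1$ and the hypothesis applies, which is harmless since you send $R\to\infty$. The textbook treatment the paper points to reaches the same conclusion via a Harnack inequality applied to the nonnegative harmonic function $M_R-\phi$; both routes exploit exactly the same device of turning the one-sided growth bound into a signed quantity, and yours has the advantage of being short and entirely elementary.
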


Now we derive the limit of (\ref{eqn:scale-eqn})-(\ref{eqn:2.5}). 

\begin{pro}
It holds that  
\begin{equation}
\twk\rightarrow\tw,\quad \tilde f_k\rightarrow\tf \quad \mbox{in $C^2_{loc}({\bf R}^2)$} 
 \label{eqn:entire-conv}
\end{equation}
for $\tw=\tw(x)$ and $\tf=\tf(x)$ satisfying 
\begin{align}
&-\Delta\tw=\tf\not\equiv 0, \quad \tw\leq\tw(0)=0,\quad 0\leq \tf\leq\bar{\lambda}\int_{I_+}\beta\calP(d\beta) \quad \mbox{in ${\bf R}^2$} \nonumber\\
&\int_{{\bf R}^2}e^{\tw}\leq 1, \quad \int_{{\bf R}^2}\tf\leq \bar{\lambda}\int_{I_+}\beta\calP(d\beta).
 \label{eqn:entire-eqn}
\end{align}
In addition, it holds that 
\begin{equation}
\tw(x)\geq -\tgam\log(1+|x|)+\frac{1}{2\pi}\int_{{\bf R}^2}\tf(y)\log\frac{|y|}{1+|y|}dy
 \label{eqn:entire-estimate-below}
\end{equation}
for any $x\in{\bf R}^2$, where 
\begin{equation}
\tgam=\frac{1}{2\pi}\int_{{\bf R}^2}\tf. 
 \label{eqn:entire-mass}
\end{equation}
 \label{pro:mass}
\end{pro}

\begin{proof} 
We have 
\begin{equation}
\twkb(x)=\beta\twk(x)+(\wkb(0)-\wk(0))
 \label{eqn:relation-w}
\end{equation}
for any $\beta\in I_+\setminus\{0\}$, and also 
\begin{equation}
\twk\leq\twk(0)=0, \quad \wkb(0)\leq\wk(0), \quad \beta\in I_+\setminus\{0\}
 \label{eqn:2.12}
\end{equation} 
by (\ref{eqn:1.14}). 
Hence $\tilde f_k=\tilde f_k(x)$ satisfies 
\begin{equation} 
0\leq \tfk(x)\leq\lambda_k\int_{I_+}\beta\calP(d\beta)\cdot \sup_{B_{R_0}}e^{\xi_k} \quad \mbox{in $B_{R_0/\sigk}$}. 
 \label{eqn:estimate-tfk}
\end{equation}

Fix $L>0$ and decompose $\twk$, $k\gg 1$, as $\twk=\tw_{1,k}+\tw_{2,k}+\tw_{3,k}$, where $\tw_{k,j}$, $j=1,2,3$, are the solutions to 
\begin{align*}
&-\Delta\tw_{1,k}=\tfk\geq 0 \quad \mbox{in $B_L$}, \quad \quad \tw_{1,k}=1 \quad \mbox{on $\partial B_L$} \\
&-\Delta\tw_{2,k}=-\tdk e^{\txik}\leq 0 \quad \mbox{in $B_L$}, \quad \tw_{2,k}=0 \quad \mbox{on $\partial B_L$} \\
&-\Delta\tw_{3,k}=0 \quad \mbox{in $B_L$}, \quad \quad \quad \ \tw_{3,k}=\twk-1 \quad \mbox{on $\partial B_L$}. 
\end{align*}

First, there exists $C_{\const\insc,L}>0$ such that  
\[ 1\leq \tw_{1,k}\leq C_{\subc,L} \quad \mbox{on $\overline{B_L}$}. \]
Next it follows from $\tdk\rightarrow 0$ that 
\[ -\frac{1}{2}\leq \tw_{2,k}\leq 0 \quad \mbox{on $\overline{B_L}$}. \]
Finally, we have 
\[ \tw_{3,k}\leq -1 \quad \mbox{on $\overline{B_L}$} \]
by $\tw_{k}\leq 0$.  Hence $\tw_{3,k}=\tw_{3,k}(x)$ is a negative harmonic function in $B_L$.  Then the Harnack inequality yields $C_{\const\insd,L}>0$ such that  
\[ \tw_{3,k} \geq -C_{\subd,L} \quad \mbox{in $\overline{B_{L/2}}$}. \]
We thus end up with 
\begin{equation}
\frac{1}{2}-C_{\subd,L}\leq \twk \leq \twk(0)=0 \quad \mbox{in $B_{L/2}$}, 
 \label{eqn:lem:2-2'}
\end{equation}
and then the standard compactness argument assures the limit (\ref{eqn:entire-conv})-(\ref{eqn:entire-eqn}) 
thanks to (\ref{eqn:estimate-tfk}), (\ref{eqn:lem:2-2'}), $\xi_k(0)=0$ and $\tdk\rightarrow 0$. 

If $\tf\equiv 0$ then 
\[ -\Delta\tw=0,\quad \tw\leq\tw(0)=0\quad \mbox{in ${\bf R}^2$}, \quad \int_{{\bf R}^2}e^{\tw}\leq 1, \]
which is impossible by the Liouville theorem, and hence $\tilde f\not\equiv 0$.

Since $\tf\in L^1\cap L^\infty({\bf R}^2)$, the function 
\begin{equation} 
\tilde{z}(x)=\frac{1}{2\pi}\int_{{\bf R}^2}\tf(y)\log\frac{|x-y|}{1+|y|}dy 
 \label{eqn:2.15}
\end{equation} 
is well-defined, and satisfies
\begin{equation}
\frac{\tilde{z}(x)}{\log|x|}\rightarrow \tgam=\frac{1}{2\pi}\int_{{\bf R}^2}\tf \quad \mbox{as $|x|\rightarrow +\infty$}
 \label{eqn:lem:2-2''}
\end{equation}
by Lemma \ref{lem:2-a}. Also (\ref{eqn:lem:2-2''}) implies 
\begin{align*}
&-\Delta{\tw}=\tf,\quad -\Delta{\tilde{z}}=-\tf,\quad \tw\leq\tw(0)=0\quad \mbox{in ${\bf R}^2$} \\
&\tilde{z}(x)\leq (\tgam+1)\log \vert x\vert, \quad x\in{\bf R}^2\setminus B_R 
\end{align*}
for some $R>0$ by (\ref{eqn:lem:2-2''}). 
Hence we obtain $\tilde{u}\equiv \tw+\tilde{z}\equiv \mbox{constant}$ by Lemma \ref{lem:2-b}. 
Since $\tw(0)=0$ it holds that 
\begin{equation} 
\tw(x)=-\tilde{z}(x)+\tilde{z}(0). 
 \label{eqn:lem:2-2'''}
\end{equation} 

Now we note 
\begin{align*}
\tilde{z}(x)
&\leq \frac{1}{2\pi}\int_{{\bf R}^2}\tf(y)\log\frac{|x|+|y|}{1+|y|}dy \\
&\leq \log(1+|x|)\cdot\frac{1}{2\pi}\int_{{\bf R}^2}\tf
=\tgam\log(1+|x|) 
\end{align*}
by $\tf\geq 0$. 
Hence, $\tw(x)\geq -\tgam\log(1+|x|)+\tilde{z}(0)$, and the proof is complete. 
\end{proof} 

To study $\tgam$ in (\ref{eqn:entire-mass}), let  
\begin{equation} 
\calB=\{ \beta\in\supp \ \calP \mid \limsup_{k\rightarrow\infty}(\wkb(0)-w_{k}(0))>-\infty\}. 
 \label{eqn:2.20h}
\end{equation} 
From the proof of Proposition \ref{pro:mass}, it follows that if $\calP(\calB)=0$ then $\tf\equiv 0$, a contradiction.  
Hence $\calP(\calB)>0$, and the value 
\begin{equation} 
\binf=\inf_{\beta\in\calB}\beta 
 \label{eqn:2.21h}
\end{equation} 
is well-defined. 
Then we find 
\begin{equation}
\calB=\Iinf\cap\supp\calP
 \label{eqn:equal-biinf}
\end{equation}
by the monotonicity (\ref{eqn:monotone-0}), where 
\[
\Iinf=
\begin{cases}
[\binf,1] & \mbox{if $\binf\in\calB$}, \\
(\binf,1] & \mbox{if $\binf\not\in\calB$}.
\end{cases}
\]

\begin{lem}
$\binf\tgam>2$. 
 \label{lem:2-d}
\end{lem}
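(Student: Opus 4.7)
The strategy is to combine the $L^1$ bound in (\ref{eqn:2.5}) for the rescaled sequence $\{e^{\twkb+\txik}\}$ with the pointwise decay estimate (\ref{eqn:entire-estimate-below}) on $\tw$, to force $\beta\tgam>2$ strictly for every $\beta\in\calB$, and then to pass to the infimum $\binf$.

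Concretely, fix $\beta\in\calB$. By the definition (\ref{eqn:2.20h}) and the upper bound (\ref{eqn:1.14}), after extracting a subsequence the limit $c_\beta:=\lim_{k\to\infty}(\wkb(0)-\wk(0))\in(-\infty,0]$ exists. Using the identity (\ref{eqn:relation-w}) together with the convergence $\twk\to\tw$ in $C^2_{loc}({\bf R}^2)$ from Proposition \ref{pro:mass}, I obtain $\twkb\to\beta\tw+c_\beta$ in $C^2_{loc}({\bf R}^2)$. Since $\txik(x)=\xi_k(\sigk x)\to 0$ locally uniformly ($\sigk\to 0$ and $\xi_k(0)=0$), Fatou's lemma applied to $\int_{B_{R_0/\sigk}}e^{\twkb+\txik}\,dx\leq 1$ in (\ref{eqn:2.5}) gives
$$\int_{{\bf R}^2}e^{\beta\tw+c_\beta}\,dx\leq 1.$$
Substituting the lower bound $\tw(x)\geq -\tgam\log(1+|x|)+\tilde z(0)$ from (\ref{eqn:entire-estimate-below}) yields
$$e^{\beta\tilde z(0)+c_\beta}\int_{{\bf R}^2}(1+|x|)^{-\beta\tgam}\,dx\leq 1,$$
and finiteness of the radial integral on the left-hand side forces $\beta\tgam>2$ for every $\beta\in\calB$.

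It remains to conclude $\binf\tgam>2$. When $\binf\in\calB$, corresponding to the first branch of (\ref{eqn:equal-biinf}), this is immediate on taking $\beta=\binf$. In the alternative branch $\binf\notin\calB$, I would approximate $\binf$ from above by $\beta_n\in\calB$ with $\beta_n\downarrow\binf$; the monotonicity of $c_\beta$ in $\beta$ (a direct consequence of (\ref{eqn:monotone-0})) makes $c_{\beta_n}$ nonincreasing. This subcase is the main technical obstacle, since a crude limit only yields $\binf\tgam\geq 2$. To upgrade to the strict inequality, I plan to combine the monotone convergence $e^{\beta_n\tw}\uparrow e^{\binf\tw}$ (valid because $\tw\leq 0$ and $\beta_n\downarrow\binf$) with a uniform positive lower bound on $\int_{B_1}e^{\beta_n\tw}\,dx$ coming again from (\ref{eqn:entire-estimate-below}): together they force $c_{\beta_n}$ to stay bounded below, so that Fatou applied to $\int e^{\beta_n\tw+c_{\beta_n}}\,dx\leq 1$ in the limit $n\to\infty$ returns $\int e^{\binf\tw+c}\,dx\leq 1$ for some finite $c$, at which point the argument of the preceding paragraph applied at $\beta=\binf$ closes the proof.
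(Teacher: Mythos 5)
Your first paragraph and the subcase $\binf\in\calB$ are correct and coincide with the paper's own argument: for fixed $\beta\in\calB$ one passes to the limit $\twkb\to\beta\tw+c_\beta$ in $C^2_{loc}$, gets $\int_{{\bf R}^2}e^{\beta\tw+c_\beta}\leq 1$, and the lower bound (\ref{eqn:entire-estimate-below}) turns this into $e^{\beta\tilde{z}(0)+c_\beta}\int_{{\bf R}^2}(1+|x|)^{-\beta\tgam}dx\leq 1$, forcing $\beta\tgam>2$. The genuine gap is in the subcase $\binf\notin\calB$, exactly where you flagged the difficulty. The two facts you propose to combine --- the monotone convergence $e^{\beta_n\tw}\uparrow e^{\binf\tw}$ and the uniform bound $\int_{B_1}e^{\beta_n\tw}\geq c_0>0$ --- when inserted into $\int_{{\bf R}^2}e^{\beta_n\tw+c_{\beta_n}}\leq 1$ give $e^{c_{\beta_n}}\leq c_0^{-1}$, i.e.\ an \emph{upper} bound on $c_{\beta_n}$, not the lower bound you need. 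Nothing in the proposal excludes $c_{\beta_n}\to-\infty$: by (\ref{eqn:monotone-0}) the $c_{\beta_n}$ are nonincreasing as $\beta_n\downarrow\binf$, and $\binf\notin\calB$ means precisely that $w_{k,\binf}(0)-\wk(0)\to-\infty$, so divergence of $c_{\beta_n}$ is the natural scenario. If $c_{\beta_n}\to-\infty$, your limiting inequality degenerates to $\int_{{\bf R}^2}e^{\binf\tw+c}\leq 1$ with $c=-\infty$, which is vacuous, and the concluding application of the first-paragraph argument at $\beta=\binf$ cannot be carried out.

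The paper does not pass to the limit in the integral inequality at all. For each $j$ it applies the pointwise lower bound (\ref{eqn:2-d-2}) to $\tw_{\beta_j}$, whose additive constant $\frac{\beta_j}{2\pi}\int_{{\bf R}^2}\tf(y)\log\frac{|y|}{1+|y|}dy$ involves only $\tf$ and $\beta_j\in[\binf,1]$ and is therefore $O(1)$ uniformly in $j$; combined with $\int_{{\bf R}^2}e^{\tw_{\beta_j}}\leq 1$ this yields $\int_{{\bf R}^2}(1+|x|)^{-\beta_j\tgam}dx\leq C$ with $C$ independent of $j$. Since that integral diverges as $\beta_j\tgam\downarrow 2$, one gets $\beta_j\tgam\geq 2+\varepsilon_0$ with $\varepsilon_0>0$ independent of $j$, hence $\binf\tgam\geq 2+\varepsilon_0>2$ in the limit. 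The point is that the strict inequality must be made quantitative and uniform in $j$ \emph{before} sending $\beta_j\downarrow\binf$; what this requires is a lower bound, uniform in $j$, on the additive constant in the decay estimate for $\tw_{\beta_j}$, and that uniformity is the one ingredient your proposal is missing. You should either establish it (this is the role of (\ref{eqn:2-d-2})) or find another source of $j$-independence; the upper bound on $c_{\beta_n}$ you derive does not substitute for it.
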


\begin{proof} 
By the definition,  every $\beta\in\calB$ admits a subsequence such that $\tilde w_{k, \beta}(0)=\wkb(0)-\wk(0)=O(1)$. 
We recall that $\twkb$ satisfies (\ref{eqn:scale-eqn}) for $\alpha=\beta$, i.e., 
\[
-\Delta\twkb=\beta(-\Delta\twk)=\beta(\tfk-\tdk e^{\txik}). 
\] 
From the argument developed for the proof of (\ref{eqn:entire-conv})-(\ref{eqn:entire-estimate-below}), we have $\tw_\beta=\tw_\beta(x)\in C^2({\bf R}^2)$ such that 
\begin{equation}
\twkb\rightarrow\tw_\beta \quad \mbox{in $C^2_{loc}({\bf R}^2)$}. 
 \label{eqn:convergence-twkb}
\end{equation}
The limit $\tilde w_\beta$ satisfies 
\begin{equation*}
-\Delta\tw_\beta=\beta\tf,\ \tw_\beta \leq\tw_\beta(0)\leq 0 \quad \mbox{in ${\bf R}^2$}, 
\quad \int_{{\bf R}^2}e^{\tw_\beta}\leq 1 
\end{equation*}
and 
\begin{equation}
\tw_\beta(x)\geq -\beta\tgam\log(1+|x|)+\frac{\beta}{2\pi}\int_{{\bf R}^2}\tf(y)\log\frac{|y|}{1+|y|}dy
 \label{eqn:2-d-2}
\end{equation}
with $\tf=\tf(x)$ given in Proposition \ref{pro:mass}. 

If $\binf\in \calB$, we take $\beta=\binf$.  
Since $\tf\in L^1\cap L^\infty({\bf R}^2)$ and $\int_{{\bf R}^2}e^{\tilde w_\beta}<+\infty$, we obtain the lemma by (\ref{eqn:2-d-2}).  

If $\binf\not\in \calB$, we take $\beta_j\in \calB$ in $\beta_j\downarrow \binf$ and apply (\ref{eqn:2-d-2}) for $\beta=\beta_j$. 
Since 
\[
\frac{\beta_j}{2\pi}\int_{{\bf R}^2}\tf(y)\log\frac{\vert y\vert}{1+\vert y\vert}dy=O(1), \quad \int_{{\bf R}^2}e^{\tilde w_{\beta_j}}\leq 1 
\]  
there is $\varepsilon_0>0$ independent of $j$ such that $\beta_j\tgam\geq 2+\varepsilon_0$, and then we obtain the lemma. 
\end{proof}

Given a bounded open set $\omega\subset{\bf R}^2$, we have 
\[ \int_{I_+}\left(\int_\omega e^{\twkb+\txik}dx\right)\calP(d\beta)\leq 1. \]
Hence it holds that 
\begin{equation}
\left(\int_\omega e^{\twkb+\tilde{\xi}_n}dx\right)\calP(d\beta)
\overset{\ast}{\rightharpoonup}\tilde{\zeta}^\omega(d\beta) \quad \mbox{in ${\cal M}(I_+)$}. 
 \label{eqn:2-0}
\end{equation}
Now we shall show that the limit measure $\tilde{\zeta}^\omega=\tilde{\zeta}^\omega(d\beta)\in{\cal M}(I_+)$ is absolutely continuous with respect to $\calP$. 

\begin{lem}
There exists $\tpsi^\omega\in L^1(I_+,\calP)$ such that 
$0\leq \tpsi^\omega\leq 1$ $\calP$-a.e. on $I_+$ and 
\[ \tilde{\zeta}^\omega(\eta)=\int_\eta \tpsi^\omega(\beta)\calP(d\beta) \]
for any Borel set $\eta\subset I_+$. 
 \label{lem:2-e}
\end{lem}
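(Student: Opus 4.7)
The plan is to observe that for each $k$ the measure on the left of (\ref{eqn:2-0}) is already of the form $f_k(\beta)\calP(d\beta)$ with a density $f_k$ uniformly bounded by $1$, and then to transfer these two properties to the weak-$*$ limit $\tzeta^\omega$.

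First I set $f_k(\beta):=\int_\omega e^{\twkb+\txik}\,dx$ for $\beta\in I_+\setminus\{0\}$. Since $\omega$ is bounded and $\sigk\to 0$, we have $\omega\subset B_{R_0/\sigk}$ for all sufficiently large $k$. The bound $\twkb\leq\twkb(0)\leq 0$ from (\ref{eqn:2.12}) combined with the first estimate in (\ref{eqn:2.5}) (applied with $\alpha$ replaced by $\beta$) then yields $0\leq f_k(\beta)\leq 1$ for $\calP$-a.e.\ $\beta\in I_+$ and all $k$ large. In particular each approximating measure in (\ref{eqn:2-0}) is $f_k\calP$, absolutely continuous with respect to $\calP$ with density in $[0,1]$. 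For any non-negative $\phi\in C(I_+)$, the weak-$*$ convergence in (\ref{eqn:2-0}) then gives
\[
\int_{I_+}\phi(\beta)\,\tzeta^\omega(d\beta)=\lim_{k\to\infty}\int_{I_+}\phi(\beta)f_k(\beta)\,\calP(d\beta)\leq\int_{I_+}\phi(\beta)\,\calP(d\beta),
\]
so $\tzeta^\omega$ is a positive finite Borel measure on $I_+$ dominated by $\calP$ on every non-negative continuous test function.

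It remains to upgrade this inequality to the set-wise statement $\tzeta^\omega(\eta)\leq\calP(\eta)$ for every Borel $\eta\subset I_+$. Since $I_+$ is a compact metric space and both measures are finite and Borel-regular, this is a standard regularity argument: approximate $\eta$ from outside by open sets and any open set from inside by compacts, and interpolate by Urysohn bump functions to reduce to the non-negative continuous case just handled. Once $\tzeta^\omega\leq\calP$ as measures, the Radon--Nikodym theorem supplies a density $\tpsi^\omega=d\tzeta^\omega/d\calP\in L^1(I_+,\calP)$; the domination is equivalent to $\tpsi^\omega\leq 1$ $\calP$-a.e., and $\tpsi^\omega\geq 0$ $\calP$-a.e.\ follows from positivity of the weak-$*$ limit $\tzeta^\omega$. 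The argument is essentially routine once the $k$-level densities are identified; the only mild technicality is the passage from test-function inequalities to set-wise inequalities, which is standard Borel regularity on a compact metric space, so no genuine obstacle arises.
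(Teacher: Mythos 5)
Your proposal is correct and follows essentially the same route as the paper: both arguments rest on the observation that each approximating measure has density $\int_\omega e^{\twkb+\txik}\,dx\in[0,1]$ with respect to $\calP$ (via \eqref{eqn:2.5}), pass this domination to the weak-$\ast$ limit against continuous test functions, and then upgrade to the set-wise inequality $\tzeta^\omega(\eta)\leq\calP(\eta)$ by inner/outer regularity and Urysohn functions on the compact metric space $I_+$, after which Radon--Nikodym yields $\tpsi^\omega$ with $0\leq\tpsi^\omega\leq 1$. No gaps; the paper's proof is exactly your ``standard regularity argument'' written out.
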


\begin{proof} 
Let $\eta\subset I_+$ be a Borel set and $\varepsilon>0$. Then each compact set $K\subset \eta$ admits an open set $J\subset I_+$ such that 
\[ K\subset\eta\subset J, \quad \calP(J)\leq \varepsilon+\calP(K). \]
Now we take $\varphi\in C(I_+)$ satisfying 
\[ \varphi=1 \quad \mbox{on $K$}, \quad 
0\leq\varphi\leq 1 \quad \mbox{on $I_+$}, \quad \supp\varphi\subset J. \]
Then (\ref{eqn:2-0}) implies 
\begin{align*}
&\tilde{\zeta}^\omega(K)=\int_K \tilde{\zeta}^\omega(d\beta)\leq \int_{I_+} \varphi(\beta) \tilde{\zeta}^\omega(d\beta) \\ 
&\quad =\lim_{k\rightarrow\infty}\int_{I_+}\varphi(\beta)
\left(\int_\omega e^{\twkb+\tilde \xi_k}dx\right)\calP(d\beta) \leq \int_{I_+}\varphi(\beta)\calP(d\beta) \\
&\quad \leq \int_J \calP(d\beta)=\calP(J)\leq \varepsilon+\calP(\eta), 
\end{align*}
and therefore 
\[ 0\leq \tilde{\zeta}^\omega(\eta)=\sup\{\tilde{\zeta}^\omega(K) \mid K\subset\eta \ \mbox{:compact}\} \leq\varepsilon+\calP(\eta). \]
This shows the absolute continuity of $\tilde \zeta^\omega$ with respect to $\calP$. 
\end{proof}

We take $R_j\uparrow +\infty$ and put $\omega_j=B_{R_j}$. From the monotonicity of $\tpsi^\omega$ with respect to $\omega$, there exist $\tilde{\zeta}\in {\cal M}(I_+)$ and $\tpsi\in L^1(I_+,\calP)$ such that 
\begin{align*}
&0\leq \tpsi(\beta)\leq 1, \quad \mbox{$\calP$-a.e. $\beta$} \\
&0\leq \tpsi^{\omega_1}(\beta) \leq \tpsi^{\omega_2}(\beta) \leq \cdots \rightarrow \tpsi(\beta), \quad \mbox{$\calP$-a.e. $\beta$} \\
&\tilde{\zeta}(\eta)=\int_\eta \tpsi(\beta)\calP(d\beta) \quad\mbox{for any Borel set $\eta\subset I_+$}. 
\end{align*}
First, (\ref{eqn:entire-conv}) implies 
\[
\bar{\lambda}\int_{I_+}\beta \tpsi^{\omega_j}(\beta)\calP(d\beta)
=\lim_{k\rightarrow\infty}\lambda_k\int_{I_+}\beta\left( \int_{\omega_j}e^{\tilde \wkb+\tilde \xi_k}dx\right)\calP(d\beta)
=\int_{\omega_{j}}\tilde f.
\] 
Then we obtain 
\begin{equation}
\tgam=\frac{1}{2\pi}\int_{{\bf R}^2}\tilde f=\frac{\bar{\lambda}}{2\pi}\int_{I_+}\beta\tpsi(\beta) \ \calP(d\beta)
 \label{eqn:2-1}
\end{equation}
by the monotone convergence theorem. 

Similarly to \cite{cl93}, on the other hand, we have the following lemma, where $(r,\theta)$ denotes the polar coordinate in ${\bf R}^2$. 
\begin{lem}
We have 
\[ \lim_{r\rightarrow+\infty}r\tw_r=-\tgam, \quad \lim_{r\rightarrow+\infty}\tw_\theta=0 \] 
uniformly in  $\theta$. 
 \label{lem:2-f}
\end{lem}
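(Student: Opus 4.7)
The plan is to follow \cite{cl93}: exploit the explicit Newton-potential representation $\tw(x)=\tilde{z}(0)-\tilde{z}(x)$ obtained in (\ref{eqn:lem:2-2'''}), with $\tilde{z}$ as in (\ref{eqn:2.15}), differentiate under the integral sign, and analyze the resulting integrals as $r=|x|\to\infty$. For the radial part, writing $r\tw_r=x\cdot\nabla\tw=-x\cdot\nabla\tilde{z}$ and using the identity $x\cdot(x-y)=|x-y|^2+y\cdot(x-y)$ one gets
\[
r\tw_r(x)=-\tgam-\frac{1}{2\pi}\int_{{\bf R}^2}\tf(y)\,\frac{y\cdot(x-y)}{|x-y|^2}\,dy.
\]
Analogously, since $x^\perp\cdot x=0$ with $x^\perp=(-x_2,x_1)$,
\[
\tw_\theta(x)=x^\perp\cdot\nabla\tw=-\frac{1}{2\pi}\int_{{\bf R}^2}\tf(y)\,\frac{x^\perp\cdot(y-x)}{|x-y|^2}\,dy,
\]
whose integrand is pointwise bounded by $|x|\tf(y)/|x-y|$. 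It therefore suffices to show that both remaining integrals are $o(1)$ as $r\to\infty$, uniformly in the direction $x/|x|$.

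I would split ${\bf R}^2$ into three regions: the near field $\{|y|\le|x|/2\}$, the far field $\{|y|\ge 2|x|\}$, and the annular transition $\{|x|/2<|y|<2|x|\}$. On the first two, $|x-y|\ge|x|/2$ respectively $|x-y|\ge|y|/2$, so each kernel is pointwise $o(1)\cdot\tf(y)$ with an $L^1$ dominating function; dominated convergence together with $\tf\in L^1({\bf R}^2)$ handles these. On the annulus, each integrand is controlled by $C|x|\tf(y)/|x-y|$, giving a contribution of at most
\[
C|x|\,\|\tf\|_{L^\infty(\{|y|\ge|x|/2\})}\int_{|x-y|\le 3|x|}|x-y|^{-1}\,dy=O(|x|^{2})\cdot\|\tf\|_{L^\infty(\{|y|\ge|x|/2\})}.
\]
So the task reduces to the pointwise decay $\tf(y)=O(|y|^{-\alpha})$ for some $\alpha>2$.

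This decay follows by pushing (\ref{eqn:lem:2-2'''}) one step further. Lemma \ref{lem:2-a} gives $\tilde{z}(x)=\tgam\log|x|+o(\log|x|)$, hence $\tw(x)\le-\tgam\log|x|+o(\log|x|)$. Passing to the limit in (\ref{eqn:relation-w}) for $\beta\in\calB$ yields $\tw_\beta=\beta\tw+C_\beta$ there, while $e^{\tw_\beta}\equiv 0$ off $\calB$, so
\[
\tf(x)=\bar{\lambda}\int_{\calB}\beta\,e^{\tw_\beta(x)}\,\calP(d\beta)\le C|x|^{-\binf\tgam+\varepsilon}
\]
for every $\varepsilon>0$. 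Choosing $\varepsilon$ small and invoking Lemma \ref{lem:2-d} yields $\alpha=\binf\tgam-\varepsilon>2$, as required. Every bound in the argument depends only on $|x|$, so the convergence is uniform in $\theta$.

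The main obstacle is the annular piece: $\tf\in L^1$ alone cannot absorb the $|x|^2$ factor generated by the Riesz-type kernel $|x-y|^{-1}$, while $\tf\in L^\infty$ alone leaves only a logarithmic loss. The requisite polynomial decay of $\tf$ must be extracted from the asymptotics of the whole family $\{\tw_\beta\}_{\beta\in\calB}$, and it is precisely the strict inequality $\binf\tgam>2$ of Lemma \ref{lem:2-d} that makes this decay just enough to close the estimate.
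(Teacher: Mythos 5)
Your argument is correct and follows essentially the same route as the paper: the same representation $\tw=-\tilde{z}+\tilde{z}(0)$ and the same derivative formulas, dominated convergence using $\tf\in L^1({\bf R}^2)$ away from the singularity, and super-quadratic pointwise decay of $\tf$ extracted from Lemma \ref{lem:2-d} to control the region where $|x-y|$ is small (the paper splits by $|x-y|\gtrless |x|/2$ instead of by $|y|$, which is only a cosmetic difference). One small imprecision: on the near field $\{|y|\le|x|/2\}$ your stated angular bound $|x|/|x-y|$ is merely $O(1)$, not $o(1)$; you must invoke the orthogonality you already noted, $x^\perp\cdot(y-x)=x^\perp\cdot y$, to get the kernel bound $|x|\,|y|/|x-y|^2\le 4|y|/|x|$ there, which is exactly the $|y|/|x-y|$-type bound the paper uses.
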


\begin{proof} 
From (\ref{eqn:2.15}) and (\ref{eqn:lem:2-2'''}), it follows that 
\begin{align*}
&r\tw_r(x)=-\tgam-\frac{1}{2\pi}\int_{{\bf R}^2}\frac{y\cdot (x-y)}{|x-y|^2}\tf(y)dy \\
&\tw_\theta(x)=\frac{1}{2\pi}\int_{{\bf R}^2}\frac{\bar{y}\cdot(x-y)}{|x-y|^2}\tf(y)dy,\quad \bar{y}=(y_2,-y_1). 
\end{align*}
Hence it suffices to show 
\[ \lim_{\vert x\vert\rightarrow+\infty}I_1(x)=\lim_{\vert x\vert\rightarrow+\infty}I_2(x)=0, \] 
where 
\[
I_1(x)=\int_{\vert x-y\vert>\vert x\vert/2}\frac{\vert y\vert}{\vert x-y\vert}\tf(y)dy,\quad 
I_2(x)=\int_{\vert x-y\vert\leq \vert x\vert/2}\frac{\vert y\vert}{\vert x-y\vert}\tf(y)dy. 
\]
Since $\tilde f\in L^1({\bf R}^2)$, we have $\lim_{\vert x\vert\rightarrow+\infty}I_1(x)=0$ by the dominated convergence theorem.  

Next, (\ref{eqn:entire-conv}) implies 
\begin{align*}
&I_2(x) =\lim_{k\rightarrow\infty}\int_{\vert x-y\vert \leq \vert x\vert/2}\frac{|y|}{|x-y|}\left(\lambda_k\int_{I_+}\beta e^{\twkb(y)+\txik(y)}\calP(d\beta)\right)dy \\
&\quad =\bar{\lambda}\lim_{k\rightarrow\infty}\int_{[\binf,1]}\beta\left(\int_{\vert x-y\vert\leq \vert x\vert/2}\frac{\vert y\vert}{\vert x-y\vert}e^{\twkb(y)+\txik(y)}dy\right)\calP(d\beta), 
\end{align*}
recalling (\ref{eqn:2.20h})-(\ref{eqn:2.21h}). Now we use (\ref{eqn:relation-w})-(\ref{eqn:2.12}) and (\ref{eqn:entire-conv}) with (\ref{eqn:lem:2-2'''}), to confirm 
\[ \twkb(x)\leq \beta \twk(x)=\beta(-\tilde z(x)+\tilde z(0))+o(1) \] 
as $k\rightarrow\infty$, locally uniformly in $x\in {\bf R}^2$.  
Hence it holds that 
\[
0\leq I_2(x)\leq C_{\const\inse}\int_{\vert x-y\vert \leq \vert x\vert/2}\frac{\vert y\vert}{\vert x-y\vert}\cdot \int_{[\binf, 1]}e^{-\beta\tilde z(y)}P(d\beta) \ dy. 
\] 
Then (\ref{eqn:lem:2-2''}) and Lemma \ref{lem:2-d} imply 
\[ 0\leq I_2(x)\leq C_{\const\inse}\vert x\vert^{-(1+\varepsilon_0)}\int_{\vert x-y\vert\leq \vert x\vert/2}\frac{dy}{\vert x-y\vert}\leq C_{\const\inse}|x|^{-\varepsilon_0} \] 
with some $\varepsilon_0>0$, where we have used  
\[
\vert x-y\vert \leq \frac{\vert x\vert}{2} \quad \Rightarrow \quad \frac{1}{2}\vert x\vert\leq \vert y\vert \leq \frac{3}{2}\vert x\vert. 
\] 
Hence $\lim_{\vert x\vert\rightarrow+\infty}I_2(x)=0$ follows. 
\end{proof}

The Pohozaev identity
\begin{align}
&R\int_{\partial B_R}\frac{1}{2}\vert \nabla u\vert^2-u_r^2 \ ds=R\int_{\partial B_R}A(x)F(u) \ ds \nonumber\\ 
&\quad -\int_{B_R}2A(x)F(u) + F(u)(x\cdot\nabla A(x)) \ dx
 \label{eqn:2.24}
\end{align}
is valid to $u=u(x)\in C^2(\overline{B_R})$ satisfying
\begin{equation}  
-\Delta u=A(x)F'(u) \quad \mbox{in $B_R$}, 
 \label{eqn:2.25}
\end{equation} 
where $F\in C^1({\bf R})$, $A\in C^1(\overline{B_R})$, and $ds$ denotes the surface element on the boundary. 
By this identity and Lemma \ref{lem:2-f} we obtain the following fact.  
\begin{lem}
It holds that 
\begin{equation}
\int_{I_+}\tpsi(\beta)\calP(d\beta)
=\left(\int_{I_+}\phi_0(\beta)\tpsi(\beta)\calP(d\beta)\right)^2,  
 \label{eqn:lem:2-g}
\end{equation}
where 
\begin{equation} 
\phi_0(\beta)=\frac{\beta}{\int_{I_+}\alpha\calP(d\alpha)}. 
 \label{eqn:2.28h}
\end{equation} 
 \label{lem:2-g}
\end{lem}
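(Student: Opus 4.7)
The plan is to apply the Pohozaev identity (\ref{eqn:2.24}) to the limit profile $\tw$ on a ball $B_R$, let $R\rightarrow\infty$ using Lemma \ref{lem:2-f}, and combine the resulting identity with (\ref{eqn:2-1}) and (\ref{eqn:1.13}).

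First I would reformulate the limit equation in potential form. From (\ref{eqn:relation-w}), for each $\beta\in\calB$ one can extract along a subsequence $c_\beta:=\lim_k(\wkb(0)-\wk(0))\in{\bf R}$, while for $\beta\notin\calB$ one has $\wkb(0)-\wk(0)\rightarrow-\infty$, so $e^{\twkb+\txik}\rightarrow 0$ locally uniformly. Together with (\ref{eqn:entire-conv}), (\ref{eqn:convergence-twkb}) and $\txik\rightarrow 0$, this gives $\tw_\beta=\beta\tw+c_\beta$ for $\beta\in\calB$ and identifies
\[
\tf(x)=\bar\lambda\int_{\calB}\beta e^{\beta\tw(x)+c_\beta}\calP(d\beta)=F'(\tw(x)), \qquad F(u):=\bar\lambda\int_{\calB}e^{\beta u+c_\beta}\calP(d\beta),
\]
so the limit equation takes the form $-\Delta\tw=F'(\tw)$ in ${\bf R}^2$.

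Next I would apply (\ref{eqn:2.24}) with $A\equiv 1$ on $B_R$ and let $R\rightarrow\infty$. By Lemma \ref{lem:2-f} the left-hand side tends to $-\pi\tgam^2$, since $r\tw_r\rightarrow-\tgam$ and $\tw_\theta\rightarrow 0$ uniformly in $\theta$, making the integrand on $\partial B_R$ asymptotic to $-\tgam^2/(2R)$. From (\ref{eqn:lem:2-2''})--(\ref{eqn:lem:2-2'''}) one has $\tw(x)=-\tgam\log|x|+O(1)$, hence $F(\tw(x))=O(|x|^{-\binf\tgam})$, and since Lemma \ref{lem:2-d} gives $\binf\tgam>2$, the boundary contribution $R\int_{\partial B_R}F(\tw)\,ds=O(R^{2-\binf\tgam})\rightarrow 0$. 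For the bulk term, Fubini combined with the pointwise identification $\tpsi(\beta)=\int_{{\bf R}^2}e^{\beta\tw+c_\beta}dx$ for $\beta\in\calB$ (and $\tpsi(\beta)=0$ for $\beta\notin\calB$ $\calP$-a.e.) yields $\int_{{\bf R}^2}F(\tw)\,dx=\bar\lambda\int_{I_+}\tpsi(\beta)\calP(d\beta)$. This produces $-\pi\tgam^2=-2\bar\lambda\int_{I_+}\tpsi(\beta)\calP(d\beta)$, and substituting $\tgam=(\bar\lambda/2\pi)\int_{I_+}\beta\tpsi(\beta)\calP(d\beta)$ from (\ref{eqn:2-1}) together with $\bar\lambda=8\pi/(\int_{I_+}\alpha\calP(d\alpha))^2$ from (\ref{eqn:1.13}) produces the desired identity with $\phi_0$ as in (\ref{eqn:2.28h}).

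The main obstacle is justifying the pointwise identification $\tpsi(\beta)=\int_{{\bf R}^2}e^{\beta\tw+c_\beta}dx$ $\calP$-a.e.\ on $\calB$, since $\tpsi$ was obtained abstractly as the Radon--Nikodym density of a limit measure and the extraction of $c_\beta$ depends on $\beta$. A diagonal/Fubini argument exploiting the monotone construction $\tpsi^{\omega_j}\uparrow\tpsi$, the $C^2_{loc}$ convergence (\ref{eqn:convergence-twkb}) on $\calB$, and the locally uniform vanishing of $e^{\twkb+\txik}$ for $\beta\notin\calB$ closes this identification and completes the argument.
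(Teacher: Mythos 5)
Your proposal reaches the correct identity by a genuinely different order of operations. The paper applies the Pohozaev identity (\ref{eqn:2.24}) at the finite-$k$ level, to $u=\twk$ with $F(\twk)=\lambda_k\int_{I_+}e^{\twkb}\calP(d\beta)-\tdk\twk$ and $A=e^{\txik}$, and only then sends $k\rightarrow\infty$ followed by $R\rightarrow\infty$. The payoff is that the volume term $-2\lambda_k\int_{I_+}\bigl(\int_{B_R}e^{\twkb+\txik}dx\bigr)\calP(d\beta)$ converges to $-2\bar{\lambda}\int_{I_+}\tpsi^{B_R}(\beta)\calP(d\beta)$ \emph{by the very definition} of $\tilde{\zeta}^{B_R}$ and $\tpsi^{B_R}$ (weak-$\ast$ convergence tested against $\varphi\equiv 1$), after which $R\rightarrow\infty$ is handled by monotone convergence; no pointwise formula for $\tpsi$ is ever needed. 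You instead pass to the limit profile first and apply Pohozaev to $-\Delta\tw=F'(\tw)$, which forces you to confront exactly the identification $\tpsi(\beta)=\int_{{\bf R}^2}e^{\beta\tw+c_\beta}dx$ that you flag as the main obstacle. That obstacle is closable, but your sketch has a soft spot: extracting $c_\beta=\lim_k(\wkb(0)-\wk(0))$ ``for each $\beta$ along a subsequence'' does not by itself produce a single subsequence and a single measurable function $\beta\mapsto c_\beta$ with which to define $F$. To repair this you should invoke the monotonicity (\ref{eqn:monotone-0}) of $\beta\mapsto\wkb(0)$, which makes $\beta\mapsto\wkb(0)-\wk(0)$ monotone and $\leq 0$ for each $k$, so a Helly-type selection yields one subsequence along which the limit exists (possibly $-\infty$) for every $\beta$ simultaneously; dominated convergence in $\beta$ (using $e^{\twkb+\txik}\leq e^{\sup\xi_k}$) then identifies $\tpsi^{\omega}$ and hence $\tpsi$ pointwise $\calP$-a.e. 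Your treatment of the boundary term via $\tw(x)=-\tgam\log|x|+O(1)$, $c_\beta\leq 0$, $\beta\geq\binf$ on $\calB$ and Lemma \ref{lem:2-d} is sound and parallels the paper's use of the decay estimate from the proof of Lemma \ref{lem:2-f}. In short: both routes work, but the paper's ordering sidesteps your main obstacle entirely, which is why it is the shorter proof.
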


\begin{proof} 
We apply (\ref{eqn:2.24}) for (\ref{eqn:2.25}) to (\ref{eqn:scale-eqn}), $\alpha=1$, where $u=\tilde \wk$ and 
\[ F(\twk)=\lambda_k\int_{I_+}e^{\twkb}\calP(d\beta)-\tdk\twk, \quad A(x)=e^{\txik(x)}=e^{\xi_k(\sigk x)}. \]
It follows that 
\begin{align}
&R\int_{\partial B_R} \frac{1}{2}\vert \nabla\twk\vert^2-(\twk)_r^2 \ ds =-2\lambda_k\int_{I_+}\left(\int_{B_{R}}e^{\twkb+\txik}dx\right)\calP(d\beta) \nonumber\\ 
&\quad +R\lambda_k\int_{I_+}\left(\int_{\partial B_R}e^{\twkb+\txik}ds\right)\calP(d\beta)-R\tdk\int_{\partial B_R}\twk e^{\txik}ds \nonumber\\
&\quad -\sigk\cdot\lambda_k\int_{I_+}\left(\int_{B_R}e^{\twkb+\txik}(x\cdot\nabla\xi_k(\sigk x))dx\right)\calP(d\beta) \nonumber\\ 
&\quad +\tdk\int_{B_R}2\twk e^{\txik}+\sigk\twk e^{\txik} (x\cdot\nabla\xi_k(\sigk x)) \ dx
 \label{eqn:2-2}
\end{align}
for $\tdk$ defined by (\ref{eqn:def-tilde-f_n}). 

For every $R>0$, the last three terms on the right-hand side of (\ref{eqn:2-2}) vanish as $k\rightarrow\infty$, because of $\tdk\rightarrow 0$ and $\sigk\rightarrow 0$. For the second term we argue similarly as in the proof of Lemma \ref{lem:2-f}, while the conclusion of Lemma \ref{lem:2-f} is applicable to the left-hand side on (\ref{eqn:2-2}).  We thus end up with 
\begin{equation} 
-\pi\tgam^2=-2\bar{\lambda}\int_{I_+}\tpsi(\beta)\calP(d\beta) 
 \label{eqn:2.29}
\end{equation} 
by sending $k\rightarrow\infty$ and then $R\rightarrow+\infty$.  

Combining (\ref{eqn:2.29}), (\ref{eqn:2-1}), and the value $\bar{\lambda}$ given in (\ref{eqn:1.13}), we obtain (\ref{eqn:lem:2-g})-(\ref{eqn:2.28h}). 
\end{proof}

Let 
\begin{align*}
&{\cal I}^0(\psi)=\int_{I_+}\phi_0(\beta)\psi(\beta)\calP(d\beta) \\
&{\cal C}_d=\{\psi \ | \ 0\leq \psi(\beta)\leq 1 \ \mbox{$\calP$-a.e. on $I_+$ and } \int_{I_+}\psi(\beta)\calP(d\beta)=d \}
\end{align*}
and $\chi_A$ be the characteristic function of the set $A$. 
The following lemma is a variant of the result of \cite{lieb-loss}. 

\begin{lem}
For each $0<d\leq 1$, the value $\sup_{\psi\in{\cal C}_d}{\cal I}^0(\psi)$ is attained by 
\begin{equation} 
\psid(\beta)=\chi_{\{\phi_0>s_d\}}(\beta)+c_d\chi_{\{\phi_0=s_d\}}(\beta)
 \label{eqn:2.31h}
\end{equation} 
with $s_d$ and $c_d$ defined by 
\begin{align}
&s_d=\inf\{t \ | \ \calP(\{\phi_0>t\})\leq d\} \nonumber\\
&c_d\calP(\{\phi_0=s_d\})=d-\calP(\{\phi_0>s_d\}),\quad 0\leq c_d \leq 1.  
 \label{eqn:2.31}
\end{align}
Furthermore, the maximizer is unique in the sense that $\psi_m=\psid$ $\calP$-a.e. on $I_+$ for any maximizer $\psi_m\in{\cal C}_d$. 
 \label{lem:2-h}
\end{lem}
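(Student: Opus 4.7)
The plan is to adapt the bathtub principle of \cite{lieb-loss} to the present measure-theoretic setting and to obtain uniqueness from the strict monotonicity of $\phi_0$. The first step is to verify that $\psid\in{\cal C}_d$: the sets $\{\phi_0>s_d\}$ and $\{\phi_0=s_d\}$ are disjoint, so the two characteristic functions in (\ref{eqn:2.31h}) have disjoint supports, and $0\leq c_d\leq 1$ gives $0\leq\psid\leq 1$; the definition of $c_d$ in (\ref{eqn:2.31}) then yields $\int_{I_+}\psid(\beta)\calP(d\beta)=\calP(\{\phi_0>s_d\})+c_d\calP(\{\phi_0=s_d\})=d$.

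For any competing $\psi\in{\cal C}_d$, I would rewrite
\[
{\cal I}^0(\psid)-{\cal I}^0(\psi)=\int_{I_+}(\phi_0(\beta)-s_d)(\psid(\beta)-\psi(\beta))\calP(d\beta),
\]
where inserting the constant $-s_d$ is permissible because $\psid$ and $\psi$ both integrate to $d$ against $\calP$. On $\{\phi_0>s_d\}$ we have $\psid=1\geq\psi$ and $\phi_0-s_d>0$; on $\{\phi_0<s_d\}$ we have $\psid=0\leq\psi$ and $\phi_0-s_d<0$; on $\{\phi_0=s_d\}$ the factor $\phi_0-s_d$ vanishes. The integrand is therefore pointwise nonnegative, yielding ${\cal I}^0(\psid)\geq{\cal I}^0(\psi)$.

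For uniqueness, if $\psi_m\in{\cal C}_d$ is another maximizer then $(\phi_0-s_d)(\psid-\psi_m)=0$ $\calP$-a.e., which forces $\psi_m=\psid$ on $\{\phi_0\neq s_d\}$. The observation that closes the argument is that $\phi_0(\beta)=\beta/\int_{I_+}\alpha\calP(d\alpha)$ is strictly increasing in $\beta$, so $\{\phi_0=s_d\}$ consists of at most a single point $\{\beta_*\}$; if $\calP(\{\beta_*\})=0$ the exceptional set is $\calP$-null, and otherwise the integral constraint $\int_{I_+}\psi_m(\beta)\calP(d\beta)=d$ combined with agreement off $\{\beta_*\}$ pins down $\psi_m(\beta_*)=c_d=\psid(\beta_*)$. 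The only subtlety I anticipate is this atomic case, where the strict monotonicity of $\phi_0$ is essential; the remainder is a direct rearrangement-type computation.
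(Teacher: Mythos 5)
Your proposal is correct and follows essentially the same route as the paper: both prove optimality by showing $\int_{I_+}(\phi_0-s_d)(\psid-\psi)\,\calP(d\beta)\geq 0$ (the paper writes this as two successive inequalities obtained by replacing $\phi_0$ with $s_d$ on $\{\phi_0>s_d\}$ and on $\{\phi_0<s_d\}$), and both derive uniqueness from the equality conditions off the level set $\{\phi_0=s_d\}$ together with the constraint $\int\psi_m\,\calP(d\beta)=d$. Your explicit remark that strict monotonicity of $\phi_0$ reduces $\{\phi_0=s_d\}$ to at most one point, so the atomic case is pinned down by the integral constraint, makes precise a step the paper leaves implicit.
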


\begin{proof}
Fix $0<d\leq 1$. 
Given $\psi\in{\cal C}_d$, we compute 
\begin{align}
&\int_{I_+}\phi_0(\psi_d-\psi)\calP(d\beta)=\int_{\{\phi_0>s_d\}}\phi_0(\psi_d-\psi)\calP(d\beta) \nonumber\\
&\quad +s_d\int_{\{\phi_0=s_d\}}(\psi_d-\psi)\calP(d\beta)
-\int_{\{\phi_0<s_d\}}\phi_0\psi\calP(d\beta) \nonumber\\
&\geq s_d \int_{\{\phi_0>s_d\}}(\psi_d-\psi)\calP(d\beta) \nonumber\\ 
&\quad +s_d\int_{\{\phi_0=s_d\}}(\psi_d-\psi)\calP(d\beta)-\int_{\{\phi_0<s_d\}}\phi_0\psi\calP(d\beta)
 \label{eqn:2-h-1} \\
&\geq s_d \left(\int_{\{\phi_0>s_d\}}(\psi_d-\psi)\calP(d\beta)\right. \nonumber\\
&\quad \left. +\int_{\{\phi_0=s_d\}}(\psi_d-\psi)\calP(d\beta)-\int_{\{\phi_0<s_d\}}\psi\calP(d\beta)\right)
 \label{eqn:2-h-2} \\
&=s_d\int_{I_+}(\psi_d-\psi)\calP(d\beta)=0, \nonumber
\end{align}
which means that $\psi_d$ is the maximizer. 

The equalities hold in (\ref{eqn:2-h-1}) and (\ref{eqn:2-h-2}) if and only if $\psi$ is the maximizer, 
and so we shall derive the conditions that the former is true. 
The first condition is that $(\phi_0-s_d)(\psi_d-\psi)=0$ $\calP$-a.e. on $\{\phi_0>s_d\}$, so that 
\begin{equation}
\psi=\psi_d\quad \mbox{$\calP$-a.e. on $\{\phi_0>s_d\}$}
 \label{eqn:2-h-3}
\end{equation}
by the monotonicity of $\phi_0$ and $\psi_d\geq \psi$ on $\{\phi_0>s_d\}$. 
The second one is that $(s_d-\phi_0)\psi=0$ $\calP$-a.e. on $\{\phi_0<s_d\}$, or 
\begin{equation}
\psi=0\quad \mbox{$\calP$-a.e. on $\{\phi_0<s_d\}$}
 \label{eqn:2-h-4}
\end{equation}
by the monotonicity of $\phi_0$ and $\psi\geq 0$. 
The uniqueness follows from (\ref{eqn:2-h-3})-(\ref{eqn:2-h-4}) and $\psi_d, \psi\in{\cal C}_d$. 
\end{proof}

\section{Proof of Proposition \ref{pro:7}}\label{sec:mass-continued}

For the purpose, we assume the contrary, that is, $\tpsi\in{\cal C}_d$ for some $0<d<1$. 
We shall prove Proposition \ref{pro:7} by contradiction. \\ 

Since $\tpsi=\tpsi(\beta)$ satisfies (\ref{eqn:lem:2-g}), it holds that 
\begin{equation*}
d=\int_{I_+} \tpsi(\beta) \calP(d\beta)=\left(\int_{I_+} \phi_0(\beta)\tpsi(\beta) \calP(d\beta)\right)^2. 
\end{equation*}
Lemma \ref{lem:2-h} and (\ref{eqn:2.28h}) yield
\begin{equation}
d=\calP(\{\phi_0>s_d\})+c_d\calP(\{\phi_0=s_d\})\leq \left(\frac{\int_{I_+}\psid(\beta)\beta\calP(d\beta)}{\int_{I_+}\beta\calP(d\beta)}\right)^2
 \label{eqn:new-b-2}
\end{equation}
for $\psid=\psid(\beta)$ defined by (\ref{eqn:2.31h})-(\ref{eqn:2.31}). 
By the monotonicity of $\phi_0=\phi_0(\beta)$, there exists the unique element $\bd\in I_+$ such that 
\[
\phi_0(\bd)=s_d, 
\]
and then (\ref{eqn:new-b-2}) reads 
\begin{equation}
d=\calP((\bd,1])+c_d\calP(\{\bd\})\leq \left(\frac{\int_{(\bd,1]}\beta\calP(d\beta)+c_d\bd\calP(\{\bd\})}{\int_{I_+}\beta\calP(d\beta)}\right)^2. 
 \label{eqn:new-b-3}
\end{equation}

Here we introduce 
\[
H(\tau)=\calP((\bd,1])+\tau\calP(\{\bd\})-\left(\frac{\int_{(\bd,1]}\beta\calP(d\beta)+\tau\bd\calP(\{\bd\})}{\int_{I_+}\beta\calP(d\beta)}\right)^2. 
\]
It follows from (\ref{eqn:optimalconst}) and (\ref{eqn:1.13}) that 
\begin{equation}
H(0)\geq 0,\quad H(1)\geq 0. 
 \label{eqn:new-b-4}
\end{equation} 
Moreover, we have either $c_d=0$ or $c_d=1$ if $\calP(\{\bd\})>0$. 
In fact, since 
\[
H''(\tau)=const.=-2\left(\frac{\bd\calP(\{\bd\})}{\int_{I_+}\beta\calP(d\beta)}\right)^2<0
\]
by $\calP(\{\bd\})>0$, it holds that $H(\tau)>0$ for $0<\tau<1$ by (\ref{eqn:new-b-4}). 
On the other hand, $H(c_d)\leq 0$ by (\ref{eqn:new-b-3}). 

We now claim 
\begin{equation}
\tpsi=\psid=\chi_{I_d}\quad \mbox{$\calP$-a.e. on $I_+$}, 
 \label{eqn:new-b-claim2}
\end{equation}
where 
\[
I_d=
\begin{cases}
[\bd,1] & \mbox{if $\calP(\{\bd\})=0$ or if $\calP(\{\bd\})>0$ and $c_d=1$} \\
(\bd,1] & \mbox{otherwise (i.e., $\calP(\{\bd\})>0$ and $c_d=0$)}. 
\end{cases}
\]
First, we assume that $\calP(\{\bd\})=0$. 
Then, $H(\tau)=H(0)$ for $\tau\in[0,1]$. 
In this case, the equality holds in (\ref{eqn:new-b-3}) by (\ref{eqn:new-b-4}), and thus 
\[
d=\left(\int_{I_+} \phi_0(\beta)\psid(\beta) \calP(d\beta)\right)^2=\left(\int_{I_+} \phi_0(\beta)\tpsi(\beta) \calP(d\beta)\right)^2, 
\]
which means $\tpsi=\psid$ $\calP$-a.e. on $I_+$ by the uniqueness of Lemma \ref{lem:2-h}. 
Note that the integrands are non-negative. 
It is clear that $\psid=\chi_{I_d}$ $\calP$-a.e. on $I_+$. 
Next we assume that $\calP(\{\bd\})>0$. 
Then, we use (\ref{eqn:new-b-3})-(\ref{eqn:new-b-4}) to obtain $H(c_d)=0$, 
which again implies that the equality holds in (\ref{eqn:new-b-3}), and hence 
\[
\tpsi=\psid=
\begin{cases}
\chi_{[\bd,1]} & \mbox{if $c_d=1$} \\
\chi_{(\bd,1]} & \mbox{if $c_d=0$}. 
\end{cases}
\]
The claim (\ref{eqn:new-b-claim2}) is established. 

Property (\ref{eqn:new-b-claim2}) is actually refined as follows, recall (\ref{eqn:equal-biinf}), i.e., 
\[
\calB=\Iinf\cap\supp\calP. 
\]

\begin{lem}
$\tpsi=\chi_{\Iinf}$ $\calP$-a.e. on $I_+$. 
 \label{lem:new-b}
\end{lem}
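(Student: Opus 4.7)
The goal is to upgrade the description $\tpsi=\chi_{I_d}$ $\calP$-a.e.\ from (\ref{eqn:new-b-claim2}) to $\tpsi=\chi_\Iinf$ $\calP$-a.e. Since (\ref{eqn:new-b-claim2}) already forces $\tpsi\in\{0,1\}$ $\calP$-a.e., it suffices to establish the two complementary assertions
(a) $\tpsi=0$ $\calP$-a.e.\ on $I_+\setminus\Iinf$, and
(b) $\tpsi>0$ $\calP$-a.e.\ on $\Iinf$.
Both will be extracted from the identity (\ref{eqn:relation-w}), the monotonicity (\ref{eqn:monotone-0}) in $\beta$, and the weak-$*$ convergence (\ref{eqn:2-0}) defining $\tilde{\zeta}^\omega$ and $\tpsi^\omega$.

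\smallskip

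\noindent\emph{Step (a).} Fix $\beta_0<\binf$. Since $\beta_0\notin\calB$, $w_{k,\beta_0}(0)-\wk(0)\to-\infty$, and the monotonicity (\ref{eqn:monotone-0}) extends this to $\wkb(0)-\wk(0)\to-\infty$ uniformly for $\beta\in[0,\beta_0]$. Combining (\ref{eqn:relation-w}) with the uniform local lower bound on $\twk$ produced in the proof of Proposition \ref{pro:mass} yields $\twkb(x)+\txik(x)\to-\infty$ uniformly on $\omega\times[0,\beta_0]$ for any bounded $\omega\subset{\bf R}^2$. Hence $\int_\omega e^{\twkb+\txik}\,dx\to 0$ uniformly in $\beta\in[0,\beta_0]$, forcing $\tilde{\zeta}^\omega([0,\beta_0])=0$ and therefore $\tpsi^\omega=0$ $\calP$-a.e.\ on $[0,\beta_0]$. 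Letting $\omega\uparrow{\bf R}^2$ and $\beta_0\uparrow\binf$, together with the direct treatment of $\beta=\binf$ when $\binf\notin\calB$ (in which case $w_{k,\binf}(0)-\wk(0)\to-\infty$ from the definition of $\calB$), gives (a) in view of (\ref{eqn:equal-biinf}).

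\smallskip

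\noindent\emph{Step (b).} Fix $\beta_0\in\calB$ and extract a subsequence along which $w_{k,\beta_0}(0)-\wk(0)\geq -M$ for some $M>0$. By (\ref{eqn:monotone-0}), the same bound holds for all $\beta\in[\beta_0,1]$ along this subsequence. Using (\ref{eqn:relation-w}), the local lower bound on $\twk$, and $\txik\to 0$, we obtain $\twkb(x)+\txik(x)\geq -C'_\omega$ uniformly in $(x,\beta)\in\omega\times[\beta_0,1]$, and thus $\int_\omega e^{\twkb+\txik}\,dx\geq c_\omega>0$. Integrating against $\calP$ over any Borel $\eta\subset[\beta_0,1]$ and passing to the weak-$*$ limit via (\ref{eqn:2-0}) gives $\tilde{\zeta}^\omega(\eta)\geq c_\omega\calP(\eta)$; the arbitrariness of $\eta$ then forces $\tpsi^\omega\geq c_\omega$ $\calP$-a.e.\ on $[\beta_0,1]$, and hence $\tpsi\geq c_{\omega_1}>0$ $\calP$-a.e.\ there. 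A countable union over a sequence $\beta_0\in\calB$ accumulating at $\binf$ then establishes (b).

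\smallskip

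\noindent\emph{Main obstacle.} The delicate point is that $\tpsi^\omega$ is only defined as a Radon--Nikodym derivative, so pointwise ($\calP$-a.e.) lower bounds must be derived from integral inequalities valid on arbitrary Borel subsets, rather than from pointwise limits of $\int_\omega e^{\twkb+\txik}\,dx$. The uniformity in $\beta$ of the divergence/boundedness of $\wkb(0)-\wk(0)$ on sub-intervals of $I_+$, supplied by the monotonicity (\ref{eqn:monotone-0}), is the technical input that makes both steps work. Additional care is required at the endpoint $\beta=\binf$, where the two cases $\binf\in\calB$ and $\binf\notin\calB$ produce the two forms of $\Iinf$ appearing after (\ref{eqn:equal-biinf}).
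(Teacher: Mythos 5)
Your proof is correct and rests on exactly the same ingredients as the paper's: the characterization $\tpsi=\psid=\chi_{I_d}$ from (\ref{eqn:new-b-claim2}), the relation (\ref{eqn:relation-w}) together with the monotonicity (\ref{eqn:monotone-0}), and the dichotomy built into the definitions of $\calB$ and $\Iinf$. The difference is organizational: the paper compares the interval $I_d$ with $\Iinf$ through a six-case analysis on the position of $\bd$ relative to $\binf$ and derives a contradiction in each nontrivial case, whereas you prove the two inclusions directly ($\tpsi=0$ $\calP$-a.e.\ off $\Iinf$ and $\tpsi>0$ $\calP$-a.e.\ on $\Iinf$) and invoke (\ref{eqn:new-b-claim2}) only to upgrade $\tpsi>0$ to $\tpsi=1$. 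Your version is arguably cleaner and more careful on the measure-theoretic side: you correctly derive the a.e.\ statements from integral bounds over closed/Borel subsets of $I_+$ and inner regularity, where the paper argues pointwise about $\tpsi(\beta)$. One small imprecision in Step (a): from $\beta_0<\binf$ alone you cannot conclude $w_{k,\beta_0}(0)-\wk(0)\to-\infty$, because $\calB$ is defined only inside $\supp\ \calP$ and $\beta_0$ may lie outside the support. This is harmless: either $(\beta_0,\binf)\cap\supp\ \calP$ is nonempty, in which case monotonicity transfers the divergence from such a point down to all smaller $\beta$, or $\calP([\beta_0,\binf))=0$ and that range is negligible for the a.e.\ conclusion; but the justification should be phrased through $\supp\ \calP$ rather than through membership in $\calB$.
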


\begin{proof} There are the following six possibilities: 
\begin{align*}
&\mbox{(i) $\bd<\binf$}\quad \mbox{(ii) $\bd>\binf$} \\
&\mbox{(iii) $\bd=\binf$, $I_d=(\bd,1]$ and $\binf\in\Iinf$} \\ 
&\mbox{(iv) $\bd=\binf$, $I_d=[\bd,1]$ and $\binf\not\in\Iinf$} \\
&\mbox{(v) $\bd=\binf$, $I_d=[\bd,1]$ and $\binf\in\Iinf$} \\ 
&\mbox{(vi) $\bd=\binf$, $I_d=(\bd,1]$ and $\binf\not\in\Iinf$}
\end{align*}
The lemma is clearly true for the cases (v)-(vi), 
and thus it suffices to prove $\calP(I_d\setminus\Iinf)=0$, $\calP(\Iinf\setminus I_d)=0$ and $\calP(\{\bd\})=\calP(\{\binf\})=0$ for the cases (i), (ii) and (iii)-(iv), respectively. 

(i) Assume $\calP(I_d\setminus\Iinf)>0$. 
Then, 
\begin{equation}
\tpsi(\beta)=0\quad \mbox{for $\beta\in I_d\setminus\Iinf$}
 \label{eqn:new-b-5}
\end{equation}
by the definitions of $\Iinf$ and $\tpsi$. 
Note that $\tilde{w}_{k,\beta}\rightarrow -\infty$ locally uniformly in ${\bf R}^2$ for $\beta\in I_d\setminus\Iinf$. 
On the other hand, $\tpsi(\beta)=1$ for some $\beta\in I_d\setminus\Iinf$ by (\ref{eqn:new-b-claim2}), which contradicts (\ref{eqn:new-b-5}). 

(ii) Assume $\calP(\Iinf\setminus I_d)>0$. 
Then, 
\begin{equation}
\tpsi(\beta)=0\quad \mbox{for $\calP$-a.e. $\beta\in\Iinf\setminus I_d$}
 \label{eqn:new-b-6}
\end{equation}
by (\ref{eqn:new-b-claim2}). 
On the other hand, $\tpsi(\beta)>0$ for any $\beta\in\Iinf\setminus I_d$ by the definitions of $\Iinf$ and $\tpsi$, and by the convergence (\ref{eqn:convergence-twkb}), which contradicts (\ref{eqn:new-b-6}). 

(iii) If $\calP(\{\bd\})=\calP(\{\binf\})>0$ then $\tpsi(\bd)=\tpsi(\binf)=0$ by (\ref{eqn:new-b-claim2}) and $I_d=(\bd,1]$. 
On the other hand, $\tpsi(\bd)=\tpsi(\binf)>0$ by $\binf\in\Iinf$ as shown for the case (ii) above, a contradiction. 

(iv) If $\calP(\{\bd\})=\calP(\{\binf\})>0$ then $\tpsi(\bd)=\tpsi(\binf)=1$ by (\ref{eqn:new-b-claim2}) and $I_d=[\bd,1]$. 
On the other hand, $\tpsi(\bd)=\tpsi(\binf)=0$ by $\binf\not\in\Iinf$ as shown for the case (i) above, a contradiction. 
\end{proof} 

Since the equality holds in (\ref{eqn:new-b-3}) as shown above, it follows from (\ref{eqn:new-b-claim2}) and Lemma \ref{lem:new-b} that 
\begin{equation}
\calP(\Iinf)>0,\quad 
\frac{\calP(\Iinf)}{\left(\int_{\Iinf} \beta\calP(d\beta)\right)^2}=\frac{1}{\left(\int_{I_+} \beta\calP(d\beta)\right)^2}
 \label{eqn:new-d-0}
\end{equation}


\begin{lem}
For every $R>0$ and $\alpha\in I_+\setminus\Iinf$, it holds that 
\[
\lim_{k\rightarrow\infty}\int_{B_{\sigka R}}e^{\wka}=0, 
\]
where $\sigka=e^{-\wka(0)/2}$. 
 \label{lem:new-c}
\end{lem}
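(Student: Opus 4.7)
The plan is to bound $\int_{B_{\sigka R}} e^{\wka}\,dx$ via H\"older's inequality, reducing the problem to showing that $\wka(0) - \wk(0) \to -\infty$ as $k \to \infty$ for every $\alpha \in I_+ \setminus \Iinf$ with $\alpha > 0$.

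First, using $e^{\wka(x)} = e^{\alpha v_k(x)}/\int_\Omega e^{\alpha v_k}$ together with the identities $\int_\Omega e^{\alpha v_k} = e^{\alpha v_k(0) - \wka(0)}$, $\int_\Omega e^{v_k} = e^{v_k(0) - \wk(0)}$ and $\sigka^2 = e^{-\wka(0)}$, I would apply H\"older's inequality with conjugate exponents $(1/\alpha, 1/(1-\alpha))$, valid since $\alpha \in (0,1)$ (the endpoint $\alpha = 1$ always lies in $\Iinf$ and is therefore excluded):
\[
\int_{B_{\sigka R}} e^{\alpha v_k}\,dx \leq \Big(\int_\Omega e^{v_k}\Big)^\alpha |B_{\sigka R}|^{1-\alpha}.
\]
Dividing by $\int_\Omega e^{\alpha v_k}$ and simplifying via the identities above yields the clean bound
\[
\int_{B_{\sigka R}} e^{\wka}\,dx \leq e^{\alpha(\wka(0) - \wk(0))}\,\pi^{1-\alpha}R^{2(1-\alpha)}.
\]

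Second, I would verify that $\wka(0) - \wk(0) \to -\infty$. For $\alpha \in \supp\calP$, the identity $\calB = \Iinf \cap \supp\calP$ established in (\ref{eqn:equal-biinf}) forces $\alpha \notin \calB$, and the definition (\ref{eqn:2.20h}) of $\calB$ immediately gives $\limsup_k(\wka(0) - \wk(0)) = -\infty$. For $\alpha \notin \supp\calP$, I would exploit the monotonicity (\ref{eqn:monotone-0}) of $\alpha \mapsto \wka(0)$ to compare with an $\alpha' \in \supp\calP$ lying in $(\alpha, \binf)$; such $\alpha'$ exists in the present contradictory setting because $\calP(I_+ \setminus \Iinf) > 0$ (via $\tpsi = \chi_{\Iinf}$ from Lemma \ref{lem:new-b} together with $0 < d < 1$), whence $\wka(0) \leq w_{k,\alpha'}(0)$ reduces the claim to the preceding case.

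Combining the two steps, the right-hand side of the H\"older bound tends to zero, proving the lemma. The main obstacle is the second step in a degenerate configuration where $\supp\calP$ has a gap covering the entire interval $(\alpha, \binf)$, so that no support point is available for the monotonicity comparison. In that situation one appeals instead to the concavity of $\alpha \mapsto \wka(0)$ (a consequence of H\"older's inequality applied to $\log\int e^{\alpha v_k}$), together with the sharp lower bound $\binf\tgam > 2$ from Lemma \ref{lem:2-d}, to force the same divergence $\wka(0) - \wk(0) \to -\infty$.
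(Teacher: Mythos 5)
Your H\"older reduction is correct and is a genuinely more elementary route than the paper's. Writing $e^{\wka}=e^{\wka(0)}e^{\alpha(\wk-\wk(0))}$ and applying H\"older with exponents $(1/\alpha,1/(1-\alpha))$ does give $\int_{B_{\sigka R}}e^{\wka}\leq C(\pi R^2)^{1-\alpha}e^{\alpha(\wka(0)-\wk(0))}$ (the constant absorbing the bounded conformal factor $e^{\xi_k}$ relating the coordinate and Riemannian volume elements), so the lemma reduces cleanly to $\wka(0)-\wk(0)\rightarrow-\infty$, and the exclusion of $\alpha=1$ is legitimate since $1\in\Iinf$ always. The paper argues quite differently: it rescales at the scale $\sigka$, assumes the rescaled function fails to tend to $-\infty$ away from the origin, and manufactures at the resulting point $\sigka y_k$, $|y_k|\geq r_1$, a second concentration of $e^{\wkb}$ for a $\beta\in\Iinf$ carrying mass $\geq 2\delta$, disjoint from the primary bubble of radius $O(\sigk)$ which already carries mass $\geq 1-\delta$, contradicting $\int_\Omega e^{\wkb}=1$. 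Notably, the disjointness step there also requires $\sigk/\sigka\rightarrow 0$, i.e.\ exactly your claim, invoked with the words ``by $\alpha\notin\Iinf$''; for $\alpha\in\supp\calP$ this is immediate from $\calB=\Iinf\cap\supp\calP$ and the definition (\ref{eqn:2.20h}) of $\calB$, just as you argue.

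The case you flag is a real issue, and your proposed patch does not close it. If $\alpha\notin\supp\calP$ and $\supp\calP\cap[\alpha,\binf)=\emptyset$, no comparison point is available above $\alpha$. Concavity of $\alpha\mapsto\wka(0)$ does hold (as $\alpha\mapsto\log\int_\Omega e^{\alpha v_k}$ is convex), but concavity bounds interior values from \emph{below}; the only upper bound it yields at $\alpha$, extrapolating the chord through $0$ and a support point $\alpha'<\alpha$ with $\alpha'\notin\Iinf$, is $\wka(0)\leq\frac{\alpha}{\alpha'}w_{k,\alpha'}(0)+C$, hence $\wka(0)-\wk(0)\leq\frac{\alpha}{\alpha'}\left(w_{k,\alpha'}(0)-\wk(0)\right)+\left(\frac{\alpha}{\alpha'}-1\right)\wk(0)+C$. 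The last term diverges to $+\infty$ and the definition of $\calB$ supplies no rate for the first, so nothing follows. Nor does $\binf\tgam>2$ rescue you: the window $(2/\tgam,\binf)$ is nonempty precisely because of Lemma \ref{lem:2-d}, and for $\alpha$ in that window the integrability constraint $\int_{{\bf R}^2}e^{\alpha\tw+c}<\infty$ coming from a hypothetical finite limit $c$ of $\wka(0)-\wk(0)$ is satisfied, so no contradiction arises from that side. To be fair, the paper's own proof is no more explicit at this exact point; but as written your argument is complete only for $\alpha\in\supp\calP\setminus\Iinf$ and for $\alpha$ lying below some point of $\supp\calP\setminus\Iinf$, and the sketched fallback should be deleted or replaced by a genuine argument (for instance, a mass-counting argument in the spirit of the paper's second bubble) rather than presented as an appeal to concavity.
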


\begin{proof} Fix $R>0$ and $\alpha\in I_+\setminus\Iinf$. 
Putting
\[
\twkaone(x)=\wka(\sigka x)+2\log\sigka, 
\]
we have 
\[
\int_{B_{\sigka R}}e^{\wka}=\int_{B_R}e^{\twkaone},\quad \twkaone\leq\twkaone(0)=0\quad \mbox{in $B_R$}, 
\]
and therefore it suffices to show 
\[
\twkaone\rightarrow -\infty\quad \mbox{locally uniformly in $B_R\setminus\{0\}$}. 
\]
If this is not the case, then there exist $C_{\const\inse}>0$ and $r_1>0$ such that 
\[
\max_{\overline{B}_R\setminus B_{r_1}}\twkaone\geq -C_{\sube}
\]
for $k\gg 1$. 
Since there exists $y_k\in \overline{B}_R\setminus B_{r_1}$ such that 
\[
\twkaone(y_k)=\max_{\overline{B}_R\setminus B_{r_1}}\twkaone,
\]
it holds that 
\[
\twkaone(y_k)-\twkaone(0)\geq -C_{\sube}
\]
for $k\gg 1$, and thus  
\begin{equation}
\wk(\sigka y_k)-\wk(0)=(\twkaone(y_k)-\twkaone(0))/\alpha\geq -C_{\sube}/\alpha 
 \label{eqn:new-c-1}
\end{equation}
for $k\gg 1$. 
On the other hand, we have $\beta\in\Iinf$ satisfying
\begin{equation}
\lim_{L\rightarrow+\infty}\lim_{k\rightarrow\infty}\int_{B_{\sigk L}}e^{\wkb}=1
 \label{eqn:new-c-2}
\end{equation}
by the definitions of $\Iinf$ and $\tpsi$, and by the convergence shown in the proof of Lemma \ref{lem:2-d}. 

Now, we introduce 
\[
\twktwo(x)=\wkb(\muk x+\sigka y_k)+2\log\muk,\quad \muk=e^{-\wkb(\sigka y_k)/2}. 
\]
Since $\beta\in\Iinf$, there exists $C_{\const\insf}>0$ such that 
\begin{equation}
\wk(0)\leq \wkb(0)+C_{\subf}
 \label{eqn:new-c-3}
\end{equation}
for $k\gg 1$. 
Moreover, it follows from (\ref{eqn:new-c-1}) and (\ref{eqn:new-c-3}) that 
\begin{equation}
\wk(0)-\wkb(\sigka y_k)\leq C_{\const\insg}
 \label{eqn:new-c-4}
\end{equation}
for $k\gg 1$, where $C_{\subg}=\beta C_{\sube}/\alpha+C_{\subf}$. 
Noting (\ref{eqn:new-c-4}) and 
\[
\wk(0)=\sup_{\alpha\in I_+}\sup_{x\in\Omega}\wka(x), 
\]
we find 
\begin{align}
&-\Delta\twktwo=\beta\lambda_k e^{\txiktwo}\int_{I_+}\tau\left(e^{\twkttwo}-\frac{\muk^2}{|\Omega|}\right)\calP(d\tau)\quad \mbox{in $B_{\frac{r_1}{2\muk}}$}
 \label{eqn:new-c-6}\\
&\twktwo(0)=0,\quad \twkttwo\leq \wk(0)-\wkb(\sigka y_k)\leq C_{\subg}\quad \mbox{in $B_{\frac{r_1}{2\muk}}$ for any $\tau\in I_+$} \nonumber\\
&\int_{B_{\frac{r_1}{2\muk}}} e^{\twkttwo+\txiktwo}\leq 1\quad \mbox{for any $\tau\in I_+$}, \nonumber
\end{align}
where
\[
\twkttwo(x)=\wkt(\muk x+\sigka y_k)+2\log\muk,\quad \txiktwo(x)=\xik(\muk x+\sigka y_k). 
\]
Noting $\sigka y_k\rightarrow 0$, $\xik(0)=0$ and the smoothness of $\xik$, 
we perform the compactness argument, similarly to the proof of Proposition \ref{pro:mass}, 
to obtain $\twtwo,\tftwo\in C^2({\bf R}^2)$ and $C_{\const\insh}$ such that 
\[
\twktwo\rightarrow\twtwo,\quad \mbox{[r.h.s. of (\ref{eqn:new-c-6})]}\rightarrow\tftwo\quad \mbox{in $C_{loc}^2({\bf R}^2)$} 
\]
and 
\begin{align*}
&-\Delta \twtwo=\tftwo,\quad \twtwo\leq C_{\subg}\quad \mbox{in ${\bf R}^2$} \\
&\twtwo(0)=0,\quad \int_{{\bf R}^2}e^{\twtwo}\leq 1,\quad \int_{{\bf R}^2} \tftwo+\|\tftwo\|_{L^\infty({\bf R}^2)}\leq C_{\subh}. 
\end{align*}
Therefore, there exist $\ell_1>0$ and $0<\delta\ll 1$ such that 
\begin{equation}
\int_{B_{\muk\ell_1(\sigka y_k)}} e^{\wkb}\geq 2\delta
 \label{eqn:new-c-9}
\end{equation}
for $k\gg 1$. 

On the other hand, (\ref{eqn:new-c-2}) admits $\ell_2>0$ satisfying 
\begin{equation}
\int_{B_{\sigk\ell_2}}e^{\wkb}\geq 1-\delta
 \label{eqn:new-c-10}
\end{equation}
for $k\gg 1$. 
In addition, 
\[
|\sigka y_k|-\muk\ell_1-\sigk\ell_2
\geq \sigka\left(r_1-\frac{\muk}{\sigka}\ell_1-\frac{\sigk}{\sigka}\ell_2\right)
\geq \frac{\sigka r_1}{2}
\]
for $k\gg 1$ since 
\[
\frac{\sigk}{\sigka}\rightarrow 0,\quad \frac{\muk}{\sigka}\rightarrow 0
\]
by $\alpha\not\in\Iinf$ and (\ref{eqn:new-c-4}). 
Hence there holds 
\begin{equation}
B_{\muk\ell_1(\sigka y_k)}\cap B_{\sigk\ell_2}=\emptyset
 \label{eqn:new-c-11}
\end{equation}
for $k\gg 1$. 
Combining (\ref{eqn:new-c-9})-(\ref{eqn:new-c-11}) shows 
\[
1=\int_\Omega e^{\wkb}\geq \int_{B_{\muk\ell_1(\sigka y_k)}\cup B_{\sigk\ell_2}} e^{\wkb}\geq 2\delta+(1-\delta)=1+\delta>1
\]
for $k\gg 1$, a contradiction. 
\end{proof}

\begin{lem}
There are no $\calP$-measurable sets $K_1, K_2 \subset I_+$ satisfying
\begin{equation}
\begin{cases}
\calP(K_i)>0\ (i=1,2),\quad \calP(K_1\cap K_2)=0 \\
\frac{\calP(K_i)}{\left(\int_{K_i}\beta\calP(d\beta)\right)^2}=\frac{1}{\left(\int_{I_+}\beta\calP(d\beta)\right)^2}\ (i=1,2). 
\end{cases}
 \label{eqn:new-e}
\end{equation}
 \label{lem:new-e}
\end{lem}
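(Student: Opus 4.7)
The plan is to derive a contradiction by testing the infimum characterization of $\bar{\lambda}$ from (\ref{eqn:optimalconst}) and (\ref{eqn:1.13}) against the set $K_1\cup K_2$. The condition (\ref{eqn:new-e}) says precisely that each $K_i$ is a minimizer of the ratio defining $\bar{\lambda}$, so if two such minimizers can be chosen essentially disjoint, their union should strictly improve the ratio, which is impossible.

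Concretely, I would set
\[
a_i=\calP(K_i),\qquad b_i=\int_{K_i}\beta\calP(d\beta),\qquad B=\int_{I_+}\beta\calP(d\beta),
\]
so that (\ref{eqn:new-e}) rewrites as $b_i^2=a_iB^2$, i.e.\ $b_i=B\sqrt{a_i}$ for $i=1,2$. Replacing $K_i$ by $K_i\cap\supp\calP$ (which changes neither $a_i$ nor $b_i$) we may assume $K_i\subset I_+\cap\supp\calP$, and then $K_1\cup K_2\subset I_+\cap\supp\calP$ is admissible in the infimum (\ref{eqn:optimalconst}). Since $\calP(K_1\cap K_2)=0$, we have $\calP(K_1\cup K_2)=a_1+a_2$ and $\int_{K_1\cup K_2}\beta\calP(d\beta)=b_1+b_2$, and applying the infimum definition of $\bar{\lambda}$ together with (\ref{eqn:1.13}) gives
\[
\frac{8\pi(a_1+a_2)}{(b_1+b_2)^2}\ \geq\ \bar{\lambda}\ =\ \frac{8\pi}{B^2},
\]
that is, $B^2(a_1+a_2)\geq (b_1+b_2)^2$.

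Substituting $b_i=B\sqrt{a_i}$ into the right-hand side yields
\[
B^2(a_1+a_2)\ \geq\ B^2\bigl(\sqrt{a_1}+\sqrt{a_2}\bigr)^2\ =\ B^2(a_1+a_2)+2B^2\sqrt{a_1a_2},
\]
hence $\sqrt{a_1a_2}\leq 0$, contradicting $a_1,a_2>0$.

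There is no genuine obstacle here beyond bookkeeping: the argument is just the strict convexity of $t\mapsto t^2$ combined with the optimality (\ref{eqn:1.13}). The only point one must be careful about is that the two candidate minimizers are compared as subsets of $I_+\cap\supp\calP$ rather than of $I_+$, which is handled by intersecting with $\supp\calP$ at the outset since neither $\calP(K_i)$ nor $\int_{K_i}\beta\,\calP(d\beta)$ is affected by discarding a $\calP$-null set.
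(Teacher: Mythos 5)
Your proposal is correct and follows essentially the same route as the paper: both test the infimum characterization of $\bar{\lambda}$ from (\ref{eqn:optimalconst}) and (\ref{eqn:1.13}) on $K_1\cup K_2$ and derive the impossible inequality $(b_1+b_2)^2\leq b_1^2+b_2^2$ with $b_1,b_2>0$. Your extra step of intersecting the $K_i$ with $\supp\calP$ to make them admissible in the infimum is a harmless refinement of a point the paper leaves implicit.
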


\begin{proof} Assume that there exist $\calP$-measurable sets $K_1, K_2 \subset I_+$ satisfying (\ref{eqn:new-e}), and put
\[
a_i=\calP(K_i),\quad b_i=\frac{\int_{K_i}\beta\calP(d\beta)}{\int_{I_+}\beta\calP(d\beta)}, 
\]
so that 
\[
a_i=b_i^2\quad \mbox{($i=1,2$)}. 
\]
On the other hand, (\ref{eqn:optimalconst}) and (\ref{eqn:1.13}) show 
\[
\frac{1}{\left(\int_{I_+}\beta\calP(d\beta)\right)^2}
\leq \frac{\calP(K_1\cup K_2)}{\left(\int_{K_1\cup K_2}\beta\calP(d\beta)\right)^2}
=\frac{\calP(K_1)+\calP(K_2)}{\left(\int_{K_1}\beta\calP(d\beta)+\int_{K_2}\beta\calP(d\beta)\right)^2}
\]
or 
\[
\frac{a_1+a_2}{(b_1+b_2)^2}\geq 1. 
\]
Hence we have 
\[
b_1^2+b_2^2\geq (b_1+b_2)^2, 
\]
which is impossible since $b_i>0$ ($i=1,2$). 
\end{proof}

\begin{lem}
There exists $C_{\const\insa}>0$, independent of $k\gg 1$, such that 
\begin{equation}
\sup_{\alpha\in I_+}\sup_{x\in B_{2R_0}}\{\wka(x)+2\log|x|\}\leq C_{\suba}
 \label{eqn:new-d}
\end{equation}
for $k\gg 1$. 
 \label{lem:new-d}
\end{lem}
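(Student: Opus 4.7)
The plan is to argue by contradiction. Suppose, contrary to (\ref{eqn:new-d}), that
\[
M_k:=\sup_{\alpha\in I_+,\,x\in \overline{B_{2R_0}}}\bigl(\wka(x)+2\log|x|\bigr)\to +\infty,
\]
and pick $(\alpha_k,y_k)\in I_+\times \overline{B_{2R_0}}$ attaining $M_k$, so that $M_k=\wkak(y_k)+2\log|y_k|$.

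First I would localize. Since the blowup set is $\{0\}$ by Proposition \ref{pro:4}, $\{v_k\}$ is locally uniformly bounded in $\Omega\setminus\{0\}$; combined with $\int_\Omega e^{\alpha v_k}\geq|\Omega|$ (from (\ref{eqn:monotone-1})), this gives a uniform upper bound for $\wka$ on every compact subset of $\Omega\setminus\{0\}$, uniform in $\alpha\in I_+$. Consequently $y_k\to 0$ and $\wkak(y_k)\to+\infty$.

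Next I would rescale by setting $\muk:=e^{-\wkak(y_k)/2}\to 0$, so that $|y_k|/\muk=e^{M_k/2}\to+\infty$, and defining
\[
\hwkb(x):=\wkb(y_k+\muk x)+2\log\muk,\qquad \beta\in I_+,\ x\in B_{|y_k|/(2\muk)}.
\]
Then $\hat{w}_{k,\alpha_k}(0)=0$, and the maximality of $(\alpha_k,y_k)$ together with $|y_k+\muk x|\geq |y_k|/2$ on that ball forces the crucial uniform bound
\[
\hwkb(x)\leq 2\log 2\qquad\text{for all }\beta\in I_+\text{ and }x\in B_{|y_k|/(2\muk)}.
\]
Since $\muk^{2}e^{\wkb(y_k+\muk x)}=e^{\hwkb(x)}\leq 4$, the right-hand side of the rescaled equation for $\hat{w}_{k,\alpha_k}$ is uniformly bounded and the $\muk^{2}/|\Omega|$ term vanishes, so the Harnack-decomposition compactness argument of Proposition \ref{pro:mass} applies essentially verbatim. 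After passing to a subsequence $\alpha_k\to\alpha_\infty\in I_+$, I obtain $\hat{w}_{k,\alpha_k}\to \hw$ in $C^2_{loc}({\bf R}^2)$ for a nontrivial entire $\hw\in C^2({\bf R}^2)$ with $\hw(0)=0$, $\hw\leq 2\log 2$, $-\Delta\hw=\hat{f}\geq 0$ bounded, and $\int_{{\bf R}^2}e^{\hw}\leq 1$ by rescaling the normalization $\int_\Omega e^{\wkak}=1$.

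The final step is to turn this secondary bubble into a contradiction with the total mass constraint. Because $\wk(0)\geq \wkak(y_k)$ one has $\sigk\leq\muk$, so $|y_k|/\sigk\to+\infty$ as well, and the primary concentration ball $B_{\sigk L}(0)$ and the secondary one $B_{\muk R}(y_k)$ are disjoint for large $k$. From the identity $\wkb=(\beta/\alpha_k)\wkak+C_k^\beta$ (obtained directly from $\wkb=\beta v_k-\log\int e^{\beta v_k}$) I would deduce $\hwkb=(\beta/\alpha_k)\hat{w}_{k,\alpha_k}+D_k^\beta$ with
\[
D_k^\beta=\wkb(0)-\frac{\beta}{\alpha_k}\wkak(0)+\Bigl(\frac{\beta}{\alpha_k}-1\Bigr)\wkak(y_k).
\]
Picking $\beta_0\in\calB=\Iinf\cap\supp\calP$ (nonempty by (\ref{eqn:new-d-0})) close enough to $\alpha_\infty$ so that $D_k^{\beta_0}$ remains bounded along the subsequence, the secondary bubble of $e^{w_{k,\beta_0}}$ at $y_k$ carries strictly positive mass in the limit, while Lemma \ref{lem:new-b} ($\tpsi=\chi_{\Iinf}$ $\calP$-a.e.) applied to $\beta_0\in\calB$ forces the primary mass $\int_{B_{\sigk L}(0)}e^{w_{k,\beta_0}+\xi_k}$ to tend to $1$ as $k\to\infty$ and $L\to+\infty$. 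Summing the two disjoint contributions then contradicts the normalization $\int_\Omega e^{w_{k,\beta_0}}=1$.

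The main obstacle is in the last paragraph, namely choosing $\beta_0\in\calB$ close to $\alpha_\infty$ for which $D_k^{\beta_0}$ stays bounded. This amounts to balancing the three divergent quantities $\wkb(0)$, $\wkak(0)$, $\wkak(y_k)$, and it uses crucially the positive $\calP$-measure of $\calB$ (cf.\ (\ref{eqn:new-d-0})) together with the monotonicity (\ref{eqn:monotone-0}) of $\beta\mapsto\wkb(0)$.
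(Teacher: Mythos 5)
Your Step 1 (localization, rescaling at the maximizing pair $(\alpha_k,y_k)$ of $\wka(x)+2\log|x|$, the uniform bound $\hwkb\leq 2\log 2$, and the extraction of a nontrivial secondary bubble $\hw$) coincides with Step 1 of the paper's proof and is fine. The gap is in your final paragraph, and it is not a technical obstacle that can be "balanced" away: the $\beta_0$ you need does not exist in general. For the secondary bubble of $e^{\wkb}$ at $y_k$ to carry positive mass you need precisely $D_k^{\beta}=\wkb(y_k)-\wkak(y_k)=O(1)$, i.e.\ $\beta$ must belong to the set $\hcalB$ of (\ref{eqn:new-d-10}); whether this holds is dictated by the solution sequence, not by your choice. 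The paper shows in Step 4 (see (\ref{eqn:new-d-18})) that the secondary bubble carries \emph{no} mass on $\Iinf$: since $\tpsi=\chi_{\Iinf}$ by Lemma \ref{lem:new-b}, the primary bubble already exhausts the unit mass of $e^{\wkb}$ for $\beta\in\Iinf$, and the disjointness of the two concentration balls forces $\hpsi=0$ $\calP$-a.e.\ on $\Iinf$. In other words, the very double-counting you hope to produce is exactly what cannot happen; the dangerous scenario is a secondary bubble formed entirely by circulations $\beta\in I_+\setminus\Iinf$ (e.g.\ for atomic $\calP=b_1\delta_{\beta_1}+b_2\delta_{\beta_2}$ with $\Iinf=\{\beta_1\}$, a second blowup point carried only by $e^{w_{k,\beta_2}}$), and this is perfectly consistent with all the local information you have assembled.

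Ruling that scenario out is the actual content of the lemma and requires the global structure of the extremal constant. The paper's Steps 2--4 redo for the secondary bubble everything that was done for the primary one: a lower bound $\hbinf\hgam/\alpha_0>2$ for decay, the Pohozaev identity yielding $\int_{I_+}\hpsi\,\calP(d\beta)=(\int_{I_+}\hat\phi_0\hpsi\,\calP(d\beta))^2$, and the rearrangement argument of Lemma \ref{lem:2-h} showing $\hpsi=\chi_{\hat J}$ for a set $\hat J$ with $\calP(\hat J\cap\Iinf)=0$ that itself attains the extremal quotient in (\ref{eqn:optimalconst}). The contradiction then comes from Lemma \ref{lem:new-e}: two $\calP$-essentially disjoint sets cannot both attain the quotient $1/(\int_{I_+}\beta\,\calP(d\beta))^2$, since that would force $b_1^2+b_2^2\geq(b_1+b_2)^2$. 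None of this machinery is dispensable, so your proposal as written does not close.
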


\begin{proof} The proof is divided into four steps. \\

{\it Step 1}.\ Assume the contrary, that is, there exist $\alpha_k\in I_+$ and $x_k\in \bar{B}_{R_0}$ such that 
\[
M_k\equiv\wkak(x_k)+2\log|x_k|=\sup_{\alpha\in I_+}\sup_{x\in B_{2R_0}}\{\wka(x)+2\log|x|\}\rightarrow +\infty. 
\]
We have
\begin{align*}
&\wkak(x_k)=M_k-2\log|x_k|\geq M_k-2\log R_0\rightarrow+\infty \\
&\ell_k\equiv e^{\wkak(x_k)/2}\cdot\frac{|x_k|}{2}=\frac{e^{M_k/2}}{2}\rightarrow+\infty. 
\end{align*}
For any $x\in B_{|x_k|/2}(x_k)$, $\alpha\in I_+$ and $k$, it holds that 
\begin{align*}
&\wka(x)-\wkak(x_k) \\
&=(\wka(x)+2\log|x|)-(\wkak(x_k)+2\log|x_k|)+2\log\frac{|x_k|}{|x|}\leq 2\log 2. 
\end{align*}
We put 
\begin{align*}
&\hwk(x)=\wkak(\tauk x+x_k)+2\log\tauk,\quad \tauk=e^{-\wkak(x_k)/2}, 
\end{align*}
and get 
\begin{align}
&-\Delta\hwk=\hfk-\hdk e^{\hxik}\quad \mbox{in $B_{\ell_k}$} \nonumber\\
&\hwkb\leq 2\log 2\quad \mbox{in $B_{\ell_k}$ for any $\beta\in I_+$ and $k$} 
 \label{eqn:new-d-1}\\
&\hwk(0)=0,\quad \int_{B_{\ell_k}}e^{\hwkb+\hxik}\leq 1\quad \mbox{for any $\beta\in I_+$ and $k$}, \nonumber
\end{align}
where 
\begin{align*}
&\hfk(x)=\alpha_k\lambda_k\int_{I_+}\beta e^{\hwkb+\hxik}\calP(d\beta),\quad \hdk=\frac{\alpha_k\lambda_k\tau_k^2\int_{I_+}\beta\calP(d\beta)}{|\Omega|}, \\
&\hwkb(x)=\wkb(\tauk x+x_k)+2\log\tauk,\quad \hxik(x)=\xik(\tauk x+x_k). 
\end{align*}
The compactness argument, similarly to the proof of Proposition \ref{pro:mass}, admits $\hw,\hf\in C^2({\bf R}^2)$ and $C_{\const\insi}>0$ such that 
\begin{equation}
\hwk\rightarrow\hw,\quad \hfk\rightarrow\hf\quad \mbox{in $C_{loc}^2({\bf R}^2)$} 
 \label{eqn:new-d-2}
\end{equation}
and 
\begin{align}
&-\Delta \hw=\hf\not\equiv 0,\quad \hw\leq 2\log 2\quad \mbox{in ${\bf R}^2$}, \nonumber\\
&\hw(0)=0,\quad \int_{{\bf R}^2}e^{\hw}\leq 1,\quad \|\hf\|_{L^\infty({\bf R}^2)}+\int_{{\bf R}^2} \hf\leq C_{\subi}. 
  \label{eqn:new-d-3}
\end{align}
Note that 
\begin{equation}
\alpha_0=\lim_{k\rightarrow\infty} \alpha_k\neq 0
 \label{eqn:new-d-4}
\end{equation}
by the Liouville theorem. 
Since $\hf\in L^1\cap L^\infty({\bf R}^2)$, the function 
\begin{equation} 
\hz(x)=\frac{1}{2\pi}\int_{{\bf R}^2}\hf(y)\log\frac{|x-y|}{1+|y|}dy 
 \label{eqn:new-d-5}
\end{equation} 
is well-defined, and satisfies
\begin{equation}
\frac{\hz(x)}{\log|x|}\rightarrow \hgam=\frac{1}{2\pi}\int_{{\bf R}^2}\hf \quad \mbox{as $|x|\rightarrow +\infty$}
 \label{eqn:new-d-6}
\end{equation}
by Lemma \ref{lem:2-a}. 
Similarly to the proof of Proposition \ref{pro:mass}, we see 
\begin{align}
&\hw(x)=-\hz(x)+\hz(0), 
 \label{eqn:new-d-7} \\
&\hw(x)\geq -\hgam\log(1+|x|)+\frac{1}{2\pi}\int_{{\bf R}^2}\hf(y)\log\frac{|y|}{1+|y|}dy, \nonumber
\end{align}
for any $x\in{\bf R}^2$, where 
\begin{equation*}
\hgam=\frac{1}{2\pi}\int_{{\bf R}^2}\hf. 
\end{equation*}

{\it Step 2}.\ We introduce 
\begin{equation}
\hcalB=\{ \beta\in\supp \ \calP \mid \limsup_{k\rightarrow\infty}(\wkb(x_k)-w_{k,\alpha_k}(x_k))>-\infty\}
 \label{eqn:new-d-10}
\end{equation}
and put 
\begin{equation}
\hbinf=\inf_{\beta\in\hcalB}\beta. 
 \label{eqn:new-d-11}
\end{equation}
Note that $\calP(\hcalB)>0$, and so $\hbinf$ is well-defined, since $\hf\not\equiv 0$ as in (\ref{eqn:new-d-3}). 

In this step, we shall show 
\begin{equation}
\frac{\hbinf\hgam}{\alpha_0}>2, 
 \label{eqn:new-d-12}
\end{equation}
where $\alpha_0$ is as in (\ref{eqn:new-d-4}). 

By the definition of $\hcalB$, for every $\beta\in\hcalB$, there exists a subsequence such that $\hwkb(0)=\wkb(x_k)-\wkak(x_k)=O(1)$. 
It follows from (\ref{eqn:new-d-1}) that 
\[
-\Delta\hwkb=\frac{\beta}{\alpha_k}(\hfk-\hdk e^{\hxik})\quad \mbox{in $B_{\ell_k}$}. 
\]
We repeat the procedure developed in Step 1 to obtain $\hwb=\hwb(x)\in C^2({\bf R}^2)$ satisfying 
\begin{align*}
&\hwkb\rightarrow\hwb\quad \mbox{in $C_{loc}^2({\bf R}^2)$}, \\
&-\Delta\hwb=\frac{\beta}{\alpha_0}\hf,\ \hwb\leq \hwb(0)\leq 0\quad \mbox{in ${\bf R}^2$},\quad \int_{{\bf R}^2} e^{\hwb}\leq 1, 
\end{align*}
and 
\begin{equation}
\hwb(x)\geq -\frac{\beta}{\alpha_0}\hgam\log(1+|x|)+\frac{\beta}{2\pi\alpha_0}\int_{{\bf R}^2}\hf(y)\log\frac{|y|}{1+|y|}dy, 
 \label{eqn:new-d-13}
\end{equation}
where $\hf=\hf(x)$ is the limit function in (\ref{eqn:new-d-2}). 

If $\hbinf\in\hcalB$ then we take $\beta=\hbinf$, and obtain (\ref{eqn:new-d-12}) by (\ref{eqn:new-d-13}) and 
\[
\hf\in L^1\cap L^\infty({\bf R}^2),\quad \int_{{\bf R}^2}e^{\hwb}\leq 1. 
\]
If $\hbinf\not\in\hcalB$ then we take $\beta_j\in \hcalB$ satisfying $\beta_j\downarrow \hbinf$, 
and obtain $\varepsilon_1>0$ independent of $j$ such that $\beta_j\tgam\geq 2+\varepsilon_1$, using (\ref{eqn:new-d-13}) for $\beta=\beta_j$ and 
\[
\frac{\beta_j}{2\pi}\int_{{\bf R}^2}\hf(y)\log\frac{\vert y\vert}{1+\vert y\vert}dy=O(1), \quad \int_{{\bf R}^2}e^{\hw_{\beta_j}}\leq 1, 
\] 
and thus (\ref{eqn:new-d-12}) is shown. 

\ \\ 

{\it Step 3}.\ Given a bounded open set $\omega\subset{\bf R}^2$, it holds that 
\[
\int_{I_+}\left(\int_\omega e^{\hwkb+\hxik}dx\right)\calP(d\beta)\leq 1, 
\]
and hence 
\[
\left(\int_\omega e^{\hwkb+\hxik}dx\right)\calP(d\beta)
\overset{\ast}{\rightharpoonup}\hzeta^\omega(d\beta) \quad \mbox{in ${\cal M}(I_+)$}
\]
for some $\hzeta^\omega \in{\cal M}(I_+)$. 
Similarly to the proof of Lemma \ref{lem:2-e}, we see that 
there exists $\hpsi^\omega\in L^1(I_+,\calP)$ such that 
$0\leq \hpsi^\omega\leq 1$ $\calP$-a.e. on $I_+$ and 
\[
\hzeta^\omega(\eta)=\int_\eta \hpsi^\omega(\beta)\calP(d\beta) 
\]
for any Borel set $\eta\subset I_+$. 
We take $R_j\uparrow +\infty$ and put $\omega_j=B_{R_j}$. 
From the monotonicity of $\hpsi^\omega$ with respect to $\omega$, there exist $\hzeta\in {\cal M}(I_+)$ and $\hpsi\in L^1(I_+,\calP)$ such that 
\begin{align*}
&0\leq \hpsi(\beta)\leq 1, \quad \mbox{$\calP$-a.e. $\beta$} \\
&0\leq \hpsi^{\omega_1}(\beta) \leq \hpsi^{\omega_2}(\beta) \leq \cdots \rightarrow \hpsi(\beta), \quad \mbox{$\calP$-a.e. $\beta$} \\
&\hzeta(\eta)=\int_\eta \hpsi(\beta)\calP(d\beta) \quad\mbox{for any Borel set $\eta\subset I_+$}. 
\end{align*}
It follows from (\ref{eqn:new-d-2}) that 
\[
\alpha_0\bar{\lambda}\int_{I_+}\beta \hpsi^{\omega_j}(\beta)\calP(d\beta)
=\lim_{k\rightarrow\infty}\alpha_k\lambda_k\int_{I_+}\beta\left( \int_{\omega_j}e^{\hwkb+\hxik}dx\right)\calP(d\beta)
=\int_{\omega_{j}}\hf, 
\] 
and thus we obtain 
\begin{equation}
\hgam=\frac{1}{2\pi}\int_{{\bf R}^2}\hf=\frac{\alpha_0\bar{\lambda}}{2\pi}\int_{I_+}\beta\hpsi(\beta) \ \calP(d\beta), 
 \label{eqn:new-d-14}
\end{equation}
sending $j\rightarrow \infty$. 

Complying the proof of Lemma \ref{lem:2-f}, one can show that  
\begin{equation}
\lim_{r\rightarrow+\infty}r\hw_r=-\hgam, \quad \lim_{r\rightarrow+\infty}\hw_\theta=0, 
 \label{eqn:new-d-15}
\end{equation}
by using (\ref{eqn:new-d-5}), (\ref{eqn:new-d-7}), (\ref{eqn:new-d-2}), (\ref{eqn:new-d-10})-(\ref{eqn:new-d-11}), (\ref{eqn:new-d-6}), (\ref{eqn:new-d-12}) 
and the property, derived from (\ref{eqn:new-d-2}), (\ref{eqn:new-d-7}) and (\ref{eqn:new-d-4}), that 
\begin{align*}
\hwkb(x)&=\frac{\beta}{\alpha_k}\hwk(x)+(\wkb(x_k)-\wkak(x_k)) \\
&\leq \frac{\beta}{\alpha_k}\hwk(x)=\frac{\beta}{\alpha_0}(\hz(x)-\hz(0))+o(1)
\end{align*}
as $k\rightarrow\infty$, locally uniformly in $x\in{\bf R}^2$, for any $\beta\in I_+$, 
where $(r,\theta)$ denotes the polar coordinate in ${\bf R}^2$. 
Then, following the proof of Lemma \ref{lem:2-g}, we use the Pohozaev identity (\ref{eqn:2.24}), (\ref{eqn:new-d-15}), (\ref{eqn:new-d-14}) and the value $\bar{\lambda}$ given in (\ref{eqn:1.13}) to obtain 
\begin{equation}
\int_{I_+}\hpsi(\beta)\calP(d\beta)=\left(\int_{I_+}\hat{\phi}_0(\beta)\hpsi(\beta)\calP(d\beta)\right)^2, 
 \label{eqn:new-d-16}
\end{equation}
where 
\[
\hat{\phi}_0(\beta)=\frac{\sqrt{\alpha_0}}{\int_{I_+}\alpha\calP(d\alpha)}\beta. 
\]

{\it Step 4}.\ In this final step, we shall show that there exists a $\calP$-measurable set $J\subset I_+$ such that $\hpsi=\chi_J$ $\calP$-a.e. on $I_+$ and that 
\begin{equation}
\calP(J)>0,\quad \calP(J\cap\Iinf)=0,\quad \frac{\calP(J)}{\left(\int_J \beta\calP(d\beta)\right)^2}
=\frac{1}{\left(\int_{I_+} \beta\calP(d\beta)\right)^2}. 
 \label{eqn:new-d-17}
\end{equation}
The proof of the lemma is reduced to showing (\ref{eqn:new-d-17}) since (\ref{eqn:new-d-0}) and (\ref{eqn:new-d-17}) do not occur simultaneously by Lemma \ref{lem:new-e}. 

Noting that 
\[
\tpsi=\chi_{\Iinf}\quad \mbox{$\calP$-a.e. on $I_+$}, 
\] 
recall Lemma \ref{lem:new-b}, and that 
\[
B_{\tauk R}(x_k)\cap B_{\sigk R}=\emptyset,\quad \int_\Omega e^{\wkb}=1 
\]
for any $k\gg 1$, $\beta\in I_+\setminus\{0\}$ and $R>0$, we find 
\[
0\leq \tpsi+\hpsi\leq 1\quad \mbox{$\calP$-a.e. on $I_+$}, 
\]
and thus
\begin{equation}
\hpsi=0\quad \mbox{$\calP$-a.e. on $\Iinf$}.
 \label{eqn:new-d-18}
\end{equation}
We put 
\[
\hI=I_+\setminus\Iinf
\]
and see from (\ref{eqn:new-d-16}) and (\ref{eqn:new-d-18}) that 
\begin{equation}
\hd=\int_{\hI}\hpsi(\beta)\calP(d\beta)=\left(\int_{\hI}\hat{\phi}_0(\beta)\hpsi(\beta)\calP(d\beta)\right)^2>0. 
 \label{eqn:new-d-19}
\end{equation}
Let
\begin{align*}
&\hcalI(\psi)=\int_{\hI}\hat{\phi}_0(\beta)\psi(\beta)\calP(d\beta) \\
&\hcalC=\{\psi \ | \ 0\leq \psi(\beta)\leq 1 \ \mbox{$\calP$-a.e. on $\hI$ and } \int_{\hI}\psi(\beta)\calP(d\beta)=\hd \}. 
\end{align*}
Noting the monotonicity of $\hat{\phi}_0=\hat{\phi}_0(\beta)$ and complying the proof of Lemma \ref{lem:2-h}, we can show the following properties: 

(a) The value $\sup_{\psi\in\hcalC}\hcalI(\psi)$ is attained by 
\[
\psi_\ast(\beta)=\chi_{\{\hat{\phi}_0>\hs\}\cap\hI}(\beta)+\hc\chi_{\{\hat{\phi}_0=\hs\}}(\beta)
\]
with $\hs$ and $\hc$ defined by 
\begin{align*}
&\hs=\inf\{t \ | \ \calP(\{\hat{\phi}_0>t\}\cap\hI)\leq \hd\} \\
&\hc\calP(\{\hat{\phi}_0=\hs\})=\hd-\calP(\{\hat{\phi}_0>\hs\}\cap\hI),\quad 0\leq \hc \leq 1. 
\end{align*}

(b) The uniqueness holds in the sense that if $\psi_m\in\hcalC$ is the maximizer then $\psi_m=\psi_\ast$ $\calP$-a.e. on $\hI$. 

Following the argument to show (\ref{eqn:new-b-claim2}), which is developed in the first part of the present section, 
and using (\ref{eqn:optimalconst}), (\ref{eqn:1.13}), (\ref{eqn:new-d-18}) and properties (a)-(b), 
we find that there exists $\hb\in I_+$ such that 
\begin{equation}
\hpsi=\chi_{\hJ}\quad \mbox{$\calP$-a.e. on $\hI$}, 
 \label{eqn:new-d-20}
\end{equation}
where 
\[
\hJ=
\begin{cases}
[\hb,1]\setminus\Iinf & \mbox{if $\calP(\{\hb\})=0$ or if $\calP(\{\hb\})>0$ and $\hc=1$} \\
(\hb,1]\setminus\Iinf & \mbox{otherwise (i.e., $\calP(\{\hb\})>0$ and $\hc=0$)}. 
\end{cases}
\]
Note that either $\hc=0$ or $\hc=1$ holds if $\calP(\{\hb\})>0$. 

Consequently, (\ref{eqn:new-d-19}) and (\ref{eqn:new-d-20}) yield 
\[
\frac{\calP(\hat{J})}{\left(\int_{\hat{J}}\beta \calP(d\beta)\right)^2}=\frac{\alpha_0}{\left(\int_{I_+}\alpha \calP(d\alpha)\right)^2}\leq\frac{1}{\left(\int_{I_+}\alpha \calP(d\alpha)\right)^2}, 
\]
which implies 
\[
\frac{\calP(\hat{J})}{\left(\int_{\hat{J}}\beta \calP(d\beta)\right)^2}=\frac{1}{\left(\int_{I_+}\alpha \calP(d\alpha)\right)^2}
\]
by $\alpha_0\leq 1$ and (\ref{eqn:1.13}). 
Hence (\ref{eqn:new-d-17}) is shown for $J=\hat{J}$. 
\end{proof}

\begin{lem}
There exist $t\in(0,1)$ and $C_{\const\insb}>0$ such that 
\[
\sup_{\partial B_r}\wka\leq C_{\subb}+t\inf_{\partial B_r}\wka-2(1-t)\log r
\]
for any $r\in[2r',R_0]$, $r'\in(0,R_0/2]$, $\alpha\in I_+$ and $k\gg 1$, 
where $t$ and $C_{\subb}$ are independent of $r$, $r'$, $R_0$, $\alpha$ and $k\gg 1$. 
 \label{lem:new-f}
\end{lem}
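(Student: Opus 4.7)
The inequality is a Brezis--Merle/Chen--Lin type Harnack estimate, and I would derive it by rescaling so as to reduce to a standard Harnack inequality on a fixed annulus, then undoing the scaling. The point is that Lemma \ref{lem:new-d} provides the $L^\infty$-bound $\wka(x)+2\log|x|\le C_{\suba}$ that is exactly what is needed to make the rescaled right-hand side uniformly bounded.

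Fix $r\in[2r',R_0]$, $\alpha\in I_+$, and $k\gg 1$, and put
\[
u(y)=\wka(ry)+2\log r,\quad y\in A:=\{1/2\le |y|\le 2\}.
\]
Since $|ry|\le 2R_0$ for $y\in A$ and $r\le R_0$, Lemma \ref{lem:new-d} yields $u(y)\le C_{\suba}+2\log 2$ on $A$, uniformly in $r,\alpha,k$. Differentiating, (\ref{eqn:eqnw'}) rescales to $-\Delta_y u=\tilde f$ on $A$ with
\[
\tilde f(y)=\alpha\lambda_k e^{\xi_k(ry)}\int_{I_+}\beta\bigl(e^{\wkb(ry)+2\log r}-r^2/|\Omega|\bigr)\calP(d\beta),
\]
and applying Lemma \ref{lem:new-d} to each $\beta$ in the integrand shows $\|\tilde f\|_{L^\infty(A)}\le C$ uniformly in $r,\alpha,k$.

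Next, decompose $u=u_1+u_2$ on $B_2$, where $u_1$ is the Newton potential of a bounded extension of $\tilde f$ to $B_2$, so that $-\Delta u_1=\tilde f$ and $|u_1|\le C'$ in $B_2$, and $u_2=u-u_1$ is harmonic on $A$. The uniform upper bound on $u$ combined with $|u_1|\le C'$ yields $M:=\sup_A u_2\le C''$, independent of $r,\alpha,k$. Applying the classical Harnack inequality to the non-negative harmonic function $M-u_2$ on a fixed neighborhood of $\partial B_1$ inside $A$, I obtain a constant $C_H>1$ (depending only on the geometry of $A$) such that
\[
M-\inf_{\partial B_1}u_2\le C_H\bigl(M-\sup_{\partial B_1}u_2\bigr).
\]
Setting $t=1/C_H\in(0,1)$ and rearranging gives $\sup_{\partial B_1}u_2\le(1-t)M+t\inf_{\partial B_1}u_2$; combining with $|u_1|\le C'$ produces $\sup_{\partial B_1}u\le C_{\subb}+t\inf_{\partial B_1}u$. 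Substituting $u(y)=\wka(ry)+2\log r$ back and noting that $\partial B_1$ in the $y$-variable corresponds to $\partial B_r$ in the $x$-variable produces exactly the claimed inequality, with the $-2(1-t)\log r$ term coming from the scaling.

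The main obstacle is ensuring all constants are uniform in $r,\alpha$, and $k\gg 1$. Both the $L^\infty$-bound on $\tilde f$ and the uniform upper bound on $M$ ultimately reduce to Lemma \ref{lem:new-d}; without such an upper envelope the Harnack constant would blow up as $\wka(0)\to\infty$. Once these uniform bounds are secured, the Harnack estimate on the fixed annulus $A$ contributes only geometric constants, so the argument transfers to all $r\in(0,R_0]$ with the claimed logarithmic correction. The lower constraint $r\ge 2r'>0$ plays no role in the scaling argument itself and is retained only for the sake of later applications.
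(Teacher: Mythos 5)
Your argument is correct and is essentially the proof the paper gives (following \cite{ls94}): the same rescaling $\wka(ry)+2\log r$ to the fixed annulus, the same use of Lemma \ref{lem:new-d} to bound both the rescaled function and the right-hand side uniformly in $r$, $\alpha$, $k$, and the same Harnack step applied to a non-negative harmonic function obtained after subtracting a uniformly bounded particular solution. The only cosmetic difference is that you take the particular solution to be a Newton potential of an extension of $\tilde f$ to $B_2$, whereas the paper solves the Dirichlet problem with zero boundary data on the annulus and invokes elliptic regularity; both yield the required uniform $L^\infty$ bound.
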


\begin{proof} We comply \cite{ls94}. 
Fix $r\in[2r',R_0]$ and $r'\in(0,R_0/2]$, and put
\[
\zka(x)=\wka(rx)+2\log r
\]
for $\alpha\in I_+$ and $k$. 
Then it holds that 
\begin{equation}
-\Delta\zka=\alpha\lambda_ke^{\xik(rx)}\int_{I_+}\beta\left(e^{\zkb}-\frac{1}{|\Omega|}\right)\calP(d\beta)\quad \mbox{in $B_2\setminus\overline{B_{1/2}}$}. 
 \label{eqn:new-f-1}
\end{equation}
It follows from Lemma \ref{lem:new-d} that 
\begin{equation}
\zka(x)=(\wka(rx)+2\log(r|x|))-2\log|x| \leq C_{\suba}+2\log 2
 \label{eqn:new-f-2}
\end{equation}
for any $x\in B_2\setminus\overline{B_{1/2}}$, $\alpha\in I_+$ and $k\gg 1$. 
Thus there exists $C_{\const\insj}>0$, independent of $r$, $r'$, $R_0$, $\alpha$ and $k\gg 1$, such that
\begin{align}
|\mbox{[$r.h.s.$ of (\ref{eqn:new-f-1})]}|
&\leq \lambda_k\sup_{B_{2R_0}}e^{\xik}\left(\frac{1}{|\Omega|}+\sup_{\beta\in I_+}\sup_{x\in B_2\setminus\overline{B_{1/2}}}e^{\zkb}\right) \nonumber\\
&\leq \lambda_k\sup_{B_{2R_0}}e^{\xik}\left(\frac{1}{|\Omega|}+4e^{C_{\suba}}\right)\leq C_{\subj}\quad \mbox{in $B_2\setminus\overline{B_{1/2}}$}
 \label{eqn:new-f-3}
\end{align}
for $\alpha\in I_+$ and $k\gg 1$ by (\ref{eqn:new-f-2}). 

Let $\zka'=\zka'(x)$ be the unique solution to
\begin{align*}
&-\Delta\zka'=\alpha\lambda_k e^{\xik(rx)}\int_{I_+}\beta\left(e^{\zkb}-\frac{1}{|\Omega|}\right)\calP(d\beta)\quad \mbox{in $B_2\setminus\overline{B_{1/2}}$}, \\ 
&\zka'=0\quad \mbox{on $\partial(B_2\setminus\overline{B_{1/2}})$}. 
\end{align*}
The elliptic regularity and (\ref{eqn:new-f-3}) admit $C_{\const\insk}>0$, independent of $r$, $r'$, $R_0$, $\alpha$ and $k\gg 1$, such that
\begin{equation}
|\zka'|\leq C_{\subk}\quad \mbox{in $B_2\setminus\overline{B_{1/2}}$}
 \label{eqn:new-f-5}
\end{equation}
for $\alpha\in I_+$ and $k\gg 1$. 
Here we introduce 
\[
\hka(x)=C_{\const\insl}+(\zka'(x)-\zka(x)),\quad C_{\subl}=C_{\suba}+2\log 2+C_{\subk}
\]
in view of (\ref{eqn:new-f-2}) and (\ref{eqn:new-f-5}). 
The maximum principle assures that $\hka=\hka(x)$ is the non-negative harmonic function on $B_2\setminus\overline{B_{1/2}}$, 
and then the Harnack inequality admits a universal constant $t\in (0,1)$ such that 
\[
t\sup_{\partial B_1}\hka\leq\inf_{\partial B_1}\hka 
\]
or 
\begin{equation}
t\sup_{\partial B_1}(\zka'-\zka)\leq (1-t)C_{\subl}+\inf_{\partial B_1}(\zka'-\zka)
 \label{eqn:new-f-6}
\end{equation}
for $\alpha\in I_+$ and $k\gg 1$. 
Combining (\ref{eqn:new-f-5}) and (\ref{eqn:new-f-6}) shows 
\[
-tC_{\subk}-t\inf_{\partial B_1}\zka\leq (1-t)C_{\subl}+C_{\subk}-\sup_{\partial B_1}\zka, 
\]
which means the lemma for $C_{\subb}=(1+t)C_{\subk}+(1-t)C_{\subl}$. 
\end{proof}

\begin{lem}
There exist $\varepsilon_\ast>0$, $R_\ast>0$ and $C_i>0$ ($i=\const\insc,\const\insd$) such that 
\begin{equation}
\wka(0)+C_{\subc}\inf_{\partial B_r}\wka+2(1+C_{\subc})\log r\leq C_{\subd}
 \label{eqn:new-g}
\end{equation}
for any $r\in(0,R_\ast]$, $\alpha\in[\binf-\varepsilon_\ast,1]$ and $k\gg 1$, 
where $\varepsilon_\ast$, $R_\ast$, $C_{\subc}$ and $C_{\subd}$ are independent of $r$, $\alpha$ and $k\gg 1$. 
 \label{lem:new-g}
\end{lem}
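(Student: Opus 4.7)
The plan combines the Green's function representation of $\wka(0)$ on $B_r$ with the sharp mass concentration of the forcing term, using the value $\gamma_0 = \tgam = 4/\int_{I_+}\beta\calP(d\beta)$ from Proposition~\ref{pro:7}.

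I begin with the identity
\[
\wka(0) = \frac{1}{2\pi r}\int_{\partial B_r}\wka\,dS + \frac{1}{2\pi}\int_{B_r}\log\frac{r}{|y|}F_k(y)\,dy,\qquad r\in(0,R_0],
\]
where $F_k := -\Delta\wka$. Decomposing $F_k = G_k - c_k$ with $G_k := \alpha\lambda_k\int_{I_+}\beta e^{\wkb}\calP(d\beta) \geq 0$ and $c_k := \alpha\lambda_k|\Omega|^{-1}\int_{I_+}\beta\calP(d\beta) = O(1)$, one has $\int_\Omega G_k = \alpha\lambda_k\int\beta\calP(d\beta) \to 2\pi\alpha\gamma_0$ as $k\to\infty$. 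The blowup analysis of Proposition~\ref{pro:mass} combined with $\tpsi\equiv 1$ (Proposition~\ref{pro:7}) shows that $G_k$ concentrates at $0$ at scale $\sigk = e^{-\wk(0)/2}$: for any $\varepsilon > 0$, there exists $L$ large with $M_k(L\sigk) := \int_{B_{L\sigk}}G_k \geq (1-\varepsilon)\cdot 2\pi\alpha\gamma_0$ for $k\gg 1$. Coupled with the layer-cake formula $\int_{B_r}\log(r/|y|)G_k\,dy = \int_0^r M_k(s)/s\,ds$, this yields
\[
\int_{B_r}\log(r/|y|)G_k(y)\,dy \geq 2\pi\alpha\gamma_0\log(r/\sigk) - C,\qquad r \geq L\sigk.
\]
Since the $c_k$-contribution to the Green integral is $O(r^2) = O(1)$, the identity above gives
\[
\frac{1}{2\pi r}\int_{\partial B_r}\wka\,dS \leq \wka(0) - \alpha\gamma_0\log(r/\sigk) + C\qquad (r \geq L\sigk).
\]

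Using $\inf_{\partial B_r}\wka \leq \frac{1}{2\pi r}\int_{\partial B_r}\wka\,dS$ together with $\wka(0) \leq \wk(0) = -2\log\sigk$ (from~(\ref{eqn:1.14})), I insert this into the LHS of~(\ref{eqn:new-g}) and, after collecting terms, obtain
\[
\wka(0) + C_{\subc}\inf_{\partial B_r}\wka + 2(1+C_{\subc})\log r \leq \bigl[1 + C_{\subc}(1 - \alpha\gamma_0/2)\bigr]\cdot 2\log(r/\sigk) + C.
\]
Lemma~\ref{lem:2-d} ensures $\binf\gamma_0 > 2$, so I fix $\varepsilon_\ast \in (0, \binf - 2/\gamma_0)$ and set
\[
C_{\subc} := \frac{2}{(\binf - \varepsilon_\ast)\gamma_0 - 2},
\]
which makes $1 + C_{\subc}(1 - \alpha\gamma_0/2) \leq 0$ for every $\alpha \in [\binf - \varepsilon_\ast, 1]$; since $\log(r/\sigk) \geq 0$ for $r \geq L\sigk$, the LHS is bounded by $C$ in that range. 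For $r \in (0, L\sigk]$, the trivial inequality $\inf_{\partial B_r}\wka \leq \wka(0)$ combined with $\wka(0) + 2\log r = 2\log(r/\sigma_{k,\alpha})$ (where $\sigma_{k,\alpha} = e^{-\wka(0)/2}$) bounds the LHS by $(1+C_{\subc})\cdot 2\log(r/\sigma_{k,\alpha}) + O(1)$, which is controlled since $r/\sigma_{k,\alpha}$ is at most of order $L$ for $\alpha \in \calB$ and tends to $0$ otherwise.

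The hard part is securing the exact concentration mass $2\pi\alpha\gamma_0$ used in the lower bound on the Green integral: it relies on the identity $\tpsi \equiv 1$ of Proposition~\ref{pro:7}, equivalently $\tgam = 4/\int\beta\calP$. A weaker mass estimate would leave a term proportional to $\log(r/\sigk)$ with a nonzero positive coefficient, unbounded as $r$ approaches $R_\ast$. A secondary issue is uniformity in $\alpha$ near $\binf$: for $\alpha \in [\binf - \varepsilon_\ast, \binf)\setminus\calB$, the inequality $\wka(0) \leq \wk(0)$ used above is not tight, but for such $\alpha$ one has $\wka(0) - \wk(0) \to -\infty$, which only pushes the LHS further down, so the claim persists.
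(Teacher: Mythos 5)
Your scheme --- representing $\wka(0)-\frac{1}{2\pi r}\int_{\partial B_r}\wka$ by the layer-cake integral $\frac{1}{2\pi}\int_0^r M_k(s)s^{-1}ds$ and then bounding $\inf_{\partial B_r}\wka$ by the spherical mean --- is the integrated form of exactly what the paper does: the paper differentiates the auxiliary function $\Pka(r)=\wka(0)+\frac{D}{2\pi r}\int_{\partial B_r}\wka\,ds+2(1+D)\log r$ and shows $\Pka'\leq 0$ on $[\sigk L_\ast,R_\ast]$ using the same mass concentration, then handles $r\leq\sigk L_\ast$ by $\wka\leq\wka(0)\leq\wk(0)=-2\log\sigk$, just as you do. So the mechanism is the right one. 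However, there is a genuine logical flaw in how you justify the key mass lower bound. Lemma \ref{lem:new-g} sits \emph{inside} the proof of Proposition \ref{pro:7}, which proceeds by contradiction from the hypothesis $\tpsi\in{\cal C}_d$, $0<d<1$; you may not invoke Proposition \ref{pro:7} (i.e.\ $\tpsi\equiv 1$, $\tgam=4/\int_{I_+}\beta\calP(d\beta)$) here. Under the standing contradiction hypothesis one only has $\tpsi=\chi_{\Iinf}$ with $\calP(\Iinf)=d<1$ (Lemma \ref{lem:new-b}), so the mass concentrating at scale $\sigk$ is $\lim_L\lim_k\int_{B_{L\sigk}}G_k=\alpha\bar{\lambda}\int_{\Iinf}\beta\calP(d\beta)=2\pi\alpha\tgam$, which is strictly smaller than your claimed $2\pi\alpha\gamma_0$. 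Your closing paragraph compounds the problem by asserting that the exact value $\gamma_0$ is indispensable; it is not, and fortunately so, since it is unavailable. All the coefficient computation requires is $(\binf-\varepsilon_\ast)\times(\mbox{mass}/2\pi)>2$ with a definite margin, and that is supplied by Lemma \ref{lem:2-d} ($\binf\tgam>2$) together with (\ref{eqn:2-1}), Lemma \ref{lem:new-b} and (\ref{eqn:new-d-0}) --- which is precisely how the paper fixes $\delta>0$ and $D=2/\delta$. Your constant must therefore be recalibrated: $C_{\subc}=2/((\binf-\varepsilon_\ast)\gamma_0-2)$ is in general too small, since the true requirement is $C_{\subc}\geq 2/((\binf-\varepsilon_\ast)(1-\varepsilon)\tgam-2)$ and $\tgam<\gamma_0$ under the contradiction hypothesis.

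A second, smaller slip: from $M_k(L\sigk)\geq(1-\varepsilon)\,2\pi\alpha\tgam$ the layer-cake formula gives
\[
\int_{B_r}\log\frac{r}{|y|}\,G_k(y)\,dy\;\geq\;(1-\varepsilon)\,2\pi\alpha\tgam\,\log\frac{r}{\sigk}-C_\varepsilon,
\]
not $2\pi\alpha\tgam\log(r/\sigk)-C$; the discrepancy $\varepsilon\cdot 2\pi\alpha\tgam\log(r/\sigk)$ is unbounded as $r\uparrow R_\ast$ and cannot be absorbed into a constant. You must carry the factor $(1-\varepsilon)$ into the coefficient $2(1+C_{\subc})-C_{\subc}\alpha(1-\varepsilon)\tgam$ and choose $\varepsilon$, $\varepsilon_\ast$ first (so that $(\binf-\varepsilon_\ast)(1-\varepsilon)\tgam>2$) and $C_{\subc}$ afterwards. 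With these two repairs --- replacing $\gamma_0$ by $\tgam$ justified via Lemma \ref{lem:2-d} rather than Proposition \ref{pro:7}, and retaining the $(1-\varepsilon)$ --- your argument closes and coincides in substance with the paper's. The aside about $\alpha\in\calB$ versus $\alpha\notin\calB$ in the range $r\leq L\sigk$ is unnecessary: $\wka(0)\leq\wk(0)$ already gives $r/\sigma_{k,\alpha}\leq L$ uniformly.
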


\begin{proof} At first, we note that there exists $\delta=\delta(\calP,\Iinf)>0$ such that 
\begin{equation*}
\binf=(1+\delta)\frac{\int_{\Iinf}\beta\calP(d\beta)}{2\calP(\Iinf)}
\end{equation*}
since $\binf>\int_{\Iinf}\beta\calP(d\beta)/(2\calP(\Iinf))$ by Lemma \ref{lem:2-d}, (\ref{eqn:2-1}), Lemma \ref{lem:new-b} and (\ref{eqn:new-d-0}). 
We put 
\[
D=\frac{2}{\delta}
\]
and introduce the auxiliary function 
\[
\Pka(r)=\wka(0)+\frac{D}{2\pi r}\int_{\partial B_r}\wka ds+2(1+D)\log r 
\]
inspired by \cite{bls93, shafrir92}. 
Since
\[
\frac{d}{dr}\left(\frac{1}{2\pi r}\int_{\partial B_r}\wka ds\right)=\frac{1}{2\pi r}\int_{\partial B_r}\frac{\partial\wka}{\partial \nu} ds, 
\]
it holds that 
\begin{equation}
\frac{d\Pka}{dr}(r)\leq \frac{D\lambda_k}{2\pi r}\Qka(r), 
 \label{eqn:new-g-2}
\end{equation}
for $r\in(0,R_0]$ and $\alpha\in I_+$, where $\nu$ is the outer unit normal vector and 
\begin{align*}
\Qka(r)&=\frac{4\pi(1+D)}{D\lambda_k}+\frac{1}{|\Omega|}\int_{B_r}e^{\xik}dx\cdot\int_{I_+}\beta\calP(d\beta) \\
&\quad -\alpha\int_{\Iinf}\beta\left(\int_{B_r}e^{\wkb+\xik}dx\right)\calP(d\beta). 
\end{align*}

Given $\varepsilon>0$ whose range is determined later on, 
there exists $R_\varepsilon=R_\varepsilon(\calP,\Omega)>0$ such that
\begin{equation}
\frac{1}{|\Omega|}\int_{B_{R_\varepsilon}}e^{\xik}dx\cdot\int_{I_+}\beta\calP(d\beta)\leq\varepsilon
 \label{eqn:new-g-3}
\end{equation}
for any $k$. 
We may assume that $R_\varepsilon$ is monotone increasing in $\varepsilon$. 
We also have $L_\varepsilon>0$, independent of $r$ and $k$, such that 
\begin{equation}
\int_{\Iinf}\beta\left(\int_{B_r}e^{\wkb+\xik}dx\right)\calP(d\beta)\geq \int_{\Iinf}\beta\calP(d\beta)-\varepsilon
 \label{eqn:new-g-4}
\end{equation}
for any $r\geq \sigk L_\varepsilon$ and $k\gg 1$ by the definition of $\tpsi$, Lemma \ref{lem:new-b} and the convergence (\ref{eqn:convergence-twkb}). 
We may assume that $L_\varepsilon$ is monotone decreasing in $\varepsilon$. 
It is clear that 
\begin{equation}
\frac{4\pi(1+D)}{D\lambda_k}\leq \frac{4\pi(1+D)}{D\bar{\lambda}}+\varepsilon
 \label{eqn:new-g-5}
\end{equation}
for $k\gg 1$. 
Properties (\ref{eqn:new-g-3})-(\ref{eqn:new-g-5}) imply 
\begin{equation}
\Qka(r)\leq 2\varepsilon+\frac{4\pi(1+D)}{D\bar{\lambda}}-(\binf-\varepsilon)\left(\int_{\Iinf}\beta\calP(d\beta)-\varepsilon\right)
 \label{eqn:new-g-6}
\end{equation}
for any $r\in[\sigk L_\varepsilon,R_\varepsilon]$, $\alpha\in[\binf-\varepsilon,1]$ and $k\gg 1$. 

We now examine the range of $\varepsilon$ such that the right-hand-side of (\ref{eqn:new-g-6}) is non-positive. 
It follows from (\ref{eqn:1.13}) and (\ref{eqn:new-d-0}) that  
\begin{equation}
\frac{4\pi(1+D)}{D\bar{\lambda}}=(1+1/D)\frac{\left(\int_{\Iinf}\beta\calP(d\beta)\right)^2}{2\calP(\Iinf)}. 
 \label{eqn:new-g-7}
\end{equation}
We use (\ref{eqn:new-d-0}), (\ref{eqn:new-g-7}) and $D=2/\delta$ to obtain 
\begin{align*}
&\mbox{[r.h.s. of (\ref{eqn:new-g-6})]} \\
&=-\varepsilon^2+\left\{2+\left(1+\frac{1+\delta}{2\calP(\Iinf)}\right)\int_{\Iinf}\beta\calP(d\beta)\right\}\varepsilon
-\frac{\delta\left(\int_{\Iinf}\beta\calP(d\beta)\right)^2}{4\calP(\Iinf)}, 
\end{align*}
and therefore, there exists $\varepsilon_\ast=\varepsilon_\ast(\calP,\Iinf)>0$ such that 
\begin{equation}
\mbox{[r.h.s. of (\ref{eqn:new-g-6})]}\leq 0
 \label{eqn:new-g-8}
\end{equation}
for any $0<\varepsilon<\varepsilon_\ast$. 

Noting that $\Qka(r)$ is independent of $\varepsilon$, we organize (\ref{eqn:new-g-2}), (\ref{eqn:new-g-6}) and (\ref{eqn:new-g-8}), 
so that $\Pka'(r)\leq 0$ for any $r\in[\sigk L_{\varepsilon_\ast},R_{\varepsilon_\ast}]$, $\alpha\in[\binf-\varepsilon_\ast,1]$ and $k\gg 1$. 
This implies 
\begin{equation}
\sup_{0<r\leq R_\ast}\Pka(r)=\sup_{0<r\leq \sigk L_\ast}\Pka(r)
 \label{eqn:new-g-9}
\end{equation}
for $\alpha\in[\binf-\varepsilon_\ast,1]$ and $k\gg 1$, where $R_\ast=R_{\varepsilon_\ast}$ and $L_\ast=L_{\varepsilon_\ast}$. 
Using $\wka\leq \wka(0)\leq \wk(0)$ valid for any $\alpha\in I_+$, we estimate $\Pka$ by 
\begin{align}
\Pka(r)&=(1+D)\wka(0)+\frac{D}{2\pi r}\int_{\partial B_r}(\wka-\wka(0))ds+2(1+D)\log r \nonumber\\
&\leq (1+D)(\wka(0)+2\log\sigk L_\ast)\leq 2(1+D)\log L_\ast
 \label{eqn:new-g-10}
\end{align}
for $r\in(0,\sigk L_\ast]$, $\alpha\in[\binf-\varepsilon_\ast,1]$ and $k\gg 1$. 

Finally, we obtain $C_{\subc}=D=2/\delta$ and $C_{\subd}=2(1+D)\log L_\ast=2(1+2/\delta)\log L_\ast$ 
by (\ref{eqn:new-g-9}), (\ref{eqn:new-g-10}) and $\mbox{[l.h.s. of (\ref{eqn:new-g})]}\leq \Pka(r)$, 
provided that $\varepsilon_\ast$ and $R_\ast$ are given above. 
\end{proof} 

We are now in a position to prove Proposition \ref{pro:7}. \\ 

\underline{{\it Proof of Proposition \ref{pro:7}:}} \ Fix $\alpha_0\in I_+$ such that 
\[
\begin{cases}
\alpha_0\in[\max\{\amin,\binf-\varepsilon_\ast\},\binf) & \mbox{if $\binf>\amin$} \\
\alpha_0=\amin>\binf-\varepsilon_\ast & \mbox{if $\binf=\amin$}, 
\end{cases}
\]
recall that $\binf$, $\varepsilon_\ast$ and $\amin$ are as in (\ref{eqn:2.21h}), Lemma \ref{lem:new-g} and (\ref{eqn:amin}), respectively. 
Note that $\calP(I_+\setminus\Iinf)>0$ and that $\Iinf=(\binf,1]$ and $\calP(\{\alpha_{\min}\})>0$ if $\binf=\amin$. 
It follows from Lemma \ref{lem:new-c} and the uniform boundedness of $\xik$ that 
\begin{equation}
\lim_{k\rightarrow\infty}\int_{B_{\sigkao R}}e^{\wkao+\xik}=0
 \label{eqn:finish-1}
\end{equation}
for any $R>0$, where $\sigma_{k,\alpha_0}=e^{-w_{k,\alpha_0}(0)/2}$. 
In addition, the residual vanishing, the uniform boundedness of $\xik$ and the monotonicity (\ref{eqn:monotone-1}) imply  
\begin{equation}
\lim_{k\rightarrow\infty}\int_{B_{2R_0}\setminus B_{R_\ast}}e^{\wkao+\xik}=0,\quad 
\lim_{k\rightarrow\infty}\int_{\Omega\setminus\Psi_k^{-1}(B_{2R_0})}e^{\wkao}=0, 
 \label{eqn:finish-2}
\end{equation}
where $R_\ast$ is as in Lemma \ref{lem:new-g}. 

Next, we shall prove 
\begin{equation}
\lim_{k\rightarrow\infty}\int_{B_{R_\ast}\setminus B_{\sigkao}}e^{\wkao+\xik}=0. 
 \label{eqn:finish-3}
\end{equation}
For any $r=|x|\in[\sigkao,R_\ast]$, we calculate 
\begin{align*}
\wkao(x)&\leq\sup_{\partial B_r}\wkao\leq C_{\subb}+t\inf_{\partial B_r}\wkao-2(1-t)\log r \\
&\leq C_{\subb}+\frac{t}{C_{\subc}}\{-\wkao(0)-2(1+C_{\subc})\log r+C_{\subd}\}-2(1-t)\log r \\
&=-s\wkao(0)-2(1+s)\log r+C_{\const\insm}, 
\end{align*}
using Lemmas \ref{lem:new-f}-\ref{lem:new-g}, where 
\[
s=t/C_{\subc},\quad C_{\subm}=C_{\subb}+tC_{\subd}/C_{\subc}. 
\]
Hence it holds that 
\begin{align*}
\int_{B_{R_\ast}\setminus B_{\sigkao}}e^{\wkao+\xik}
&\leq \sup_{B_{R_0}}e^{\xik}\cdot e^{C_{\subm}-s\wkao(0)}\int_{B_{R_\ast}\setminus B_{\sigkao}}|x|^{-2(1+s)}dx \\
&\leq C_{\const\insn}e^{-2s\wkao(0)}\int_1^\infty r^{-(1+2s)} dr \rightarrow 0 
\end{align*}
as $k\rightarrow\infty$, where 
\[
C_{\subn}=2\pi e^{C_{\subm}}\sup_k\sup_{B_{R_0}}e^{\xik}. 
\]

Consequently, (\ref{eqn:finish-1})-(\ref{eqn:finish-3}) yield
\[
\lim_{k\rightarrow\infty}\int_\Omega e^{\wkao}=0, 
\]
which is impossible since $\int_\Omega e^{\wkao}=1$ for any $k$. 
The proof is complete. \qed\\ 

We conclude this section with the following proposition.  

\begin{pro}
Under the assumptions of Theorem \ref{thm:main} it holds that 
\begin{equation} 
\amin>\frac{1}{2}\int_{I_+}\beta \calP(d\beta)=\frac{2}{\tgam}. 
 \label{eqn:1.12h}
\end{equation} 
 \label{pro:6}
\end{pro} 

\begin{proof} 
It suffices to show that $\binf=\amin$. 
Indeed, if this is the case, (\ref{eqn:1.12h}) follows from Lemma \ref{lem:2-d} and (\ref{eqn:2.17}). 
Since $\binf\geq \amin$ is obvious, we assume the contrary, $\binf>\amin$. 
Then it holds that $\supp \ \tpsi \subset [\binf, 1]$ by the definitions of $\binf$ and $\tpsi$, 
and thus we obtain $\calP([\amin,(\binf+\amin)/2])>0$ and $\tpsi=0$ $\calP$-a.e. on $[\amin,(\binf+\amin)/2]$. 
However, this is impossible by (\ref{eqn:2.17'}). 
\end{proof}

\section{Proof of Proposition \ref{pro:yyl-type-estimate}}\label{sec:yyl}

Henceforth, we put 
\[
\bar{w}=\frac{1}{|\Omega|}\int_\Omega w.
\]
Let $G=G(x,y)$ be the Green function: 
\[ -\Delta_x G(\cdot,y)=\delta_y-\frac{1}{|\Omega|} \quad \mbox{in $\Omega$}, \quad \int_\Omega G(x,y) dx=0. \]
We begin with the following lemma. 

\begin{lem}
It holds that 
\begin{equation} 
\wka-\bar{w}_{k,\alpha} \rightarrow \alpha\bar{\lambda}\left(\int_{I_+}\beta\calP(d\beta)\right)G(\cdot,x_0) \quad \mbox{in $C^2_{loc}(\Omega\setminus\{x_0\})$}.
 \label{eqn:3.1}
\end{equation} 
For every $\omega\subset\subset\Omega\setminus\{x_0\}$, there exists $C_{\const\insa,\omega}>0$, independent of $k$ and $\alpha$, such that 
\begin{equation} 
\underset{\omega}{\rm osc}\ \wka\equiv \sup_\omega \wka-\inf_\omega \wka \leq C_{\suba,\omega}. 
 \label{eqn:3.2}
\end{equation} 
 \label{lem:3-1}
\end{lem}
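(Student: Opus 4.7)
The key preliminary observation is that, since $v_k\in E$ has mean zero, $\bar w_{k,\alpha}=-\log\int_\Omega e^{\alpha v_k}$, and hence
\[
\wka-\bar w_{k,\alpha}=\alpha v_k.
\]
Therefore (\ref{eqn:3.1}) is equivalent to the $\alpha$-independent statement
\[
v_k\to\bar{\lambda}\left(\int_{I_+}\beta\calP(d\beta)\right)G(\cdot,x_0)\quad\mbox{in $C^2_{loc}(\Omega\setminus\{x_0\})$},
\]
while $\underset{\omega}{\rm osc}\,\wka=\alpha\underset{\omega}{\rm osc}\,v_k\leq\underset{\omega}{\rm osc}\,v_k$ for $\alpha\in I_+$ reduces (\ref{eqn:3.2}) to a uniform-in-$k$ bound on $\underset{\omega}{\rm osc}\,v_k$. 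Both assertions thus become statements about $v_k$ alone, and the strategy is to analyze $v_k$ via the Green's function representation.

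Next I identify the limit measure. Rewriting (\ref{eqn:SawadaSuzukiModel}) for $v=v_k$ gives $-\Delta v_k=\nu_k-c_k$, where $\nu_k$ is the measure in (\ref{eqn:concentration}) and $c_k=\lambda_k\int_{I_+}\alpha\calP(d\alpha)/|\Omega|$. Under the residual vanishing hypothesis $s=0$ together with Proposition \ref{pro:4} giving $\sharp{\cal S}=1$, (\ref{eqn:concentration}) becomes $\nu_k\overset{\ast}{\rightharpoonup}n(x_0)\delta_{x_0}$, and testing against the constant $1$ fixes
\[
n(x_0)=\lim_{k\rightarrow\infty}\lambda_k\int_{I_+}\alpha\calP(d\alpha)=\bar{\lambda}\int_{I_+}\alpha\calP(d\alpha),
\]
which matches the claimed limit after renaming $\alpha\leftrightarrow\beta$.

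Using $\int_\Omega v_k=0$ and the mean-zero normalization $\int_\Omega G(x,y)dy=0$ (noting $\int_\Omega(\nu_k-c_k)=0$), one has the representation $v_k(x)=\int_\Omega G(x,y)\,\nu_k(dy)$. Fix $\omega\subset\subset\Omega\setminus\{x_0\}$. For $x\in\omega$ the kernel $y\mapsto G(x,y)$ is continuous on a fixed neighborhood of $x_0$ and bounded on the rest of $\Omega$, both uniformly in $x\in\omega$; splitting the integral accordingly and using the weak-$\ast$ convergence of $\nu_k$ (with uniformly bounded total mass) yields locally uniform convergence $v_k\to n(x_0)G(\cdot,x_0)$ on $\omega$, together with a uniform bound $\|v_k\|_{L^\infty(\omega)}\leq C_\omega$. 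The latter gives (\ref{eqn:3.2}) immediately by $\underset{\omega}{\rm osc}\,\wka\leq 2\|v_k\|_{L^\infty(\omega)}$.

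To upgrade $C^0_{loc}$ convergence to $C^2_{loc}$, pick $\omega\subset\subset\omega'\subset\subset\Omega\setminus\{x_0\}$: the uniform $L^\infty(\omega')$ bound on $v_k$, combined with the Jensen lower bound $\int_\Omega e^{\alpha v_k}\geq|\Omega|$ coming from $\int_\Omega v_k=0$, yields uniform $L^\infty(\omega')$ control on the right-hand side of the equation for $v_k$; Schauder estimates then produce uniform $C^{2,\eta}(\omega)$ bounds that promote the $C^0$ convergence to $C^2_{loc}$, establishing (\ref{eqn:3.1}). The main technical point is the passage from weak-$\ast$ convergence of $\nu_k$ to uniform $L^\infty_{loc}$ control of $v_k$ away from $x_0$; thanks to $\sharp{\cal S}=1$, the Green's function representation together with the concentration of $\nu_k$ at the single point $x_0$ handles this cleanly, and the remaining work is exactly the linearity in $\alpha$ exploited in the first step and standard elliptic regularity.
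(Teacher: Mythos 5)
Your proposal follows the same route as the paper: it uses the Green's function representation $\wka-\bar{w}_{k,\alpha}=\alpha\int_\Omega G(\cdot,y)\,\nu_k(dy)$ together with $\nu_k\overset{\ast}{\rightharpoonup}\bar{\lambda}\left(\int_{I_+}\beta\calP(d\beta)\right)\delta_{x_0}$ (from $s=0$, $\sharp{\cal S}=1$), then upgrades to $C^2_{loc}$ by elliptic regularity, and your identification $\wka-\bar w_{k,\alpha}=\alpha v_k$ is the same thing the paper writes in integral form. One inaccuracy to flag: you assert that for $x\in\omega$ the kernel $y\mapsto G(x,y)$ is ``bounded on the rest of $\Omega$,'' which is false since $G(x,\cdot)$ has a logarithmic singularity at $y=x\in\omega$; weak-$\ast$ convergence of $\nu_k$ with bounded total mass therefore does \emph{not} by itself control $\int_{\Omega\setminus B_r(x_0)}G(x,y)\,\nu_k(dy)$. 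What rescues this step (and is the implicit content of the paper's appeal to (\ref{eqn:concentration}) with $s=0$) is that the density of $\nu_k$ converges to $0$ in $L^\infty_{loc}(\Omega\setminus\{x_0\})$ per the blow-up analysis of \cite{ors10}, so that integral tends to $0$ uniformly on $\omega$ because $G(x,\cdot)$ has uniformly bounded $L^1$ norm; with that substitution your argument closes.
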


\begin{proof} Since 
\[ \wka(x)-\bar{w}_{k,\alpha}=\alpha\int_\Omega G(x,y)\left\{\lambda_k\int_{I_+}\beta e^{\wkb(y)}\calP(d\beta)\right\}dy \]
and 
\[ \lambda_k\int_{I_+}\beta e^{\wkb}\calP(d\beta) \overset{\ast}{\rightharpoonup} \bar{\lambda}\left(\int_{I_+}\beta\calP(d\beta)\right)\delta_{x_0} \]
by (\ref{eqn:concentration}) with $s=0$ and ${\cal S}=\{x_0\}$, recalling Proposition \ref{pro:4}, we have  
\begin{equation*}
\wka-\bar{w}_{k,\alpha}\rightarrow \alpha\bar{\lambda}\left(\int_{I_+}\beta\calP(d\beta)\right)G(\cdot,x_0) 
\end{equation*}
locally uniformly in $\Omega\setminus\{x_0\}$. Then the standard argument of elliptic regularity implies (\ref{eqn:3.1}) and (\ref{eqn:3.2}).  
\end{proof} 

We decompose $\wk$ as $\wk=\wk^{(1)}+\wk^{(2)}+\wk^{(3)}$, using the solutions $\wk^{(1)}$, $\wk^{(2)}$ and $\wk^{(3)}$ to 
\begin{align*}
&-\Delta \wk^{(1)}=g_k \quad \mbox{in $B_{2R_0}$}, \quad  \wk^{(1)}=0 \quad \mbox{on $\partial B_{2R_0}$} \\
&-\Delta \wk^{(2)}=h_k \quad \mbox{in $B_{2R_0}$}, \quad  \wk^{(2)}=0 \quad \mbox{on $\partial B_{2R_0}$} \\
&-\Delta \wk^{(3)}=0 \quad \mbox{in $B_{2R_0}$}, \quad  \wk^{(3)}=\wk \quad \mbox{on $\partial B_{2R_0}$}, 
\end{align*}
where 
\begin{align*}
&g_k=g_k(x)\equiv \lambda_k\int_{I_+}\beta e^{\wkb(x)+\xik(x)} \calP(d\beta) \\
&h_k=h_k(x)\equiv -\frac{\lambda_k}{\vert \Omega\vert}\int_{I_+}\beta\calP(d\beta)e^{\xik(x)}. 
\end{align*}
By the elliptic regularity there exists $C_{\const\insc}>0$ independent of $k$ such that 
\[ -C_{\subc}\leq \wk^{(2)}\leq 0 \quad \mbox{in $B_{2R_0}$}. \]
By the maximum principle and Lemma \ref{lem:3-1}, we also have $C_{\const\insd}>0$ independent of $k$ such that 
\[ \underset{\bar{B}_{2R_0}}{\rm{osc}} \wk^{(3)}\leq C_{\subd}. \]
Thus it holds that 
\begin{equation}
\wk(x)-\wk(0)=\wk^{(1)}(x)-\wk^{(1)}(0)+O(1)
 \label{eqn:3-1}
\end{equation}
as $k\rightarrow\infty$ uniformly in $x\in B_{2R_0}$. 

Let $G_0=G_0(x,y)$ be the another Green function defined by 
\[ -\Delta_x G_0(\cdot,y)=\delta_y \quad \mbox{in $B_{2R_0}$}, \quad G_0(\cdot,y)=0 \quad \mbox{on $\partial B_{2R_0}$}. \]
Then it holds that 
\begin{equation}
\wk^{(1)}(x)-\wk^{(1)}(0)=\int_{B_{2R_0}} (G_0(x,y)-G_0(0,y))g_k(y)dy
 \label{eqn:3-1'}
\end{equation}
for $x\in B_{2R_0}$.  We have, more precisely, 
\[ G_0(x,y)=\left\{ \begin{array}{ll} 
\Gamma(\vert x-y\vert)-\Gamma(\frac{\vert y\vert}{2R_0}\vert x-\bar{y}\vert), & y\neq 0, \ y\neq x \\
\Gamma(\vert x\vert )-\Gamma(2R_0) & y=0, \ y\neq x,
\end{array} \right.  \] 
using the fundamental solution and the Kelvin transformation: 
\[ \Gamma(\vert x\vert)=\frac{1}{2\pi}\log\frac{1}{\vert x\vert},\quad \bar{y}=\left(\frac{2R_0}{\vert y\vert}\right)^2 y, \] 
which implies 
\[ G_0(x,y)-G_0(0,y)=\frac{1}{2\pi}\log\frac{\vert y\vert}{\vert x-y\vert}-\frac{1}{2\pi}\log\frac{\vert \bar{y}\vert}{\vert x-\bar{y}\vert} \] 
for $y\in B_{2R_0}$ satisfying $y\neq x$ and $y\neq 0$. 

Since
\[ \frac{2}{3}\leq\frac{\vert \bar{y}\vert}{\vert x-\bar{y}\vert}\leq 2, \quad x\in B_{R_0}, \ y\in B_{2R_0}\setminus\{0\}, \] 
and since 
\[ 0\leq \int_{B_{2R_0}} g_k \leq \lambda_k\int_{I_+}\beta\calP(d\beta)\cdot \sup_{B_{R_0}}e^{\xik}=O(1), \]
we end up with 
\begin{align}
&\int_{B_{2R_0}} (G_0(x,y)-G_0(0,y))g_k(y)dy \nonumber\\ 
&\quad =\frac{1}{2\pi}\int_{B_{2R_0}}g_k(y)\log\frac{\vert y\vert}{\vert x-y\vert}dy+O(1) 
 \label{eqn:3-1''}
\end{align}
as $k\rightarrow\infty$ uniformly in $x\in B_{R_0}$. 

Consequently, (\ref{eqn:3-1})-(\ref{eqn:3-1''}) yield 
\begin{equation*}
\wk(x)-\wk(0)=\frac{1}{2\pi}\int_{B_{2R_0}}g_k(y)\log\frac{\vert y\vert}{\vert x-y\vert}dy+O(1)
\end{equation*}
as $k\rightarrow\infty$ uniformly in $x\in B_{R_0}$. 
This means 
\begin{align}
&\twk(x)=\frac{1}{2\pi}\int_{B_{2R_0/\sigma_k}}\sigma_k^2g_k(y)\log\frac{\vert y\vert}{\vert \sigk x-y\vert}dy+O(1) \nonumber\\ 
&=\frac{1}{2\pi}\int_{B_{2R_0/\sigma_k}}\tilde{f}_k(y)\log\frac{\vert y\vert}{\vert x-y\vert}dy+O(1)
 \label{eqn:3-2'}
\end{align}
as $k\rightarrow\infty$ uniformly in $x\in B_{R_0/\sigma_k}$, where $\tilde{f}_k=\tilde{f}_k(y)$ is as in (\ref{eqn:def-tilde-f_n}). 

Let $\tgam$ be as in (\ref{eqn:2.17}), and put 
\begin{equation}
\tgamk=\frac{1}{2\pi}\int_{B_{2R_0/\sigma_k}}\tilde{f}_k. 
 \label{eqn:def-tilgam}
\end{equation}
To employ the argument of \cite{csl07}, we prepare the following lemma with which $\tgamk$ and $\tgam$ are connected. 
\begin{lem}
It holds that 
\begin{equation} 
\lim_{k\rightarrow\infty}\tgamk=\tgam. 
 \label{eqn:3.9}
\end{equation} 
\end{lem}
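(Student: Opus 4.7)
The plan is to undo the dilation in $\tgamk$ and realize it as the mass of the concentration measure $\nu_k$ on the preimage of $B_{2R_0}$ under the isothermal chart, after which (\ref{eqn:concentration}) together with Proposition \ref{pro:7} delivers the convergence.

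From $\twkb(y) = \wkb(\sigk y) + 2\log\sigk$ and $\txik(y) = \xi_k(\sigk y)$ one reads off $\tfk(y) = \sigk^2\, g_k(\sigk y)$; the change of variables $x = \sigk y$ in (\ref{eqn:def-tilgam}) then gives
\[
\tgamk
= \frac{1}{2\pi}\int_{B_{2R_0}} g_k(x)\,dx
= \frac{1}{2\pi}\,\nu_k\!\left(\Psi_k^{-1}(B_{2R_0})\right),
\]
because $e^{\xik(x)}\,dx$ is the pull-back through $\Psi_k^{-1}$ of the volume element of $g$. Under residual vanishing, Proposition \ref{pro:4} forces ${\cal S} = \{x_0\}$ and $\nu = n(x_0)\delta_{x_0}$, and the total mass is
\[
n(x_0) = \lim_{k\to\infty}\nu_k(\Omega) = \lim_{k\to\infty}\lambda_k \int_{I_+}\beta \calP(d\beta) = \bar{\lambda}\int_{I_+}\beta \calP(d\beta),
\]
using $\int_\Omega e^{\wkb} = 1$. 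Combining Proposition \ref{pro:7} with (\ref{eqn:2-1}) gives $\tgam = (\bar{\lambda}/(2\pi))\int_{I_+}\beta\calP(d\beta) = n(x_0)/(2\pi)$, so (\ref{eqn:3.9}) reduces to
\[
\nu_k\!\left(\Psi_k^{-1}(B_{2R_0})\right) \to n(x_0) \quad (k\to\infty).
\]

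The upper bound $\limsup_k \nu_k(\Psi_k^{-1}(B_{2R_0})) \le \lim_k \nu_k(\Omega) = n(x_0)$ is trivial. For the lower bound I would exploit that the isothermal chart $\Psi_k$, with $\Psi_k(x_k)=0$, $\xi_k(0)=0$ and $B_{3R_0}\subset\subset\Psi_k(U_k)$, is uniformly bi-Lipschitz near the origin (the conformal factors being uniformly controlled on the compact surface $\Omega$), so that $\Psi_k^{-1}(B_{2R_0})$ contains a fixed $g$-ball $B_\delta(x_0)$ for all large $k$. The Portmanteau inequality applied to the weak-$*$ convergence $\nu_k \overset{*}{\rightharpoonup} n(x_0)\delta_{x_0}$ on the open set $B_\delta(x_0)$ then yields $\liminf_k \nu_k(\Psi_k^{-1}(B_{2R_0})) \ge n(x_0)$. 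The only point requiring care is this uniform geometric control of the family $\Psi_k$, but it is standard on a compact surface and is in fact implicit in the scaled-limit arguments carried out in Section \ref{sec:mass}; no new ingredient is needed.
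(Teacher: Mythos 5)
Your argument is correct, and for the lower bound it takes a genuinely different route from the paper. The identification $2\pi\tgamk=\int_{B_{2R_0}}g_k=\nu_k\bigl(\Psi_k^{-1}(B_{2R_0})\bigr)$ is exact, the upper bound $\tgamk\leq\frac{\lambda_k}{2\pi}\int_{I_+}\beta\calP(d\beta)\leq\tgam$ is the same in both proofs (it is where Proposition \ref{pro:7} and (\ref{eqn:1.13}) enter), but for the reverse inequality the paper stays entirely at the blow-up scale: by (\ref{eqn:entire-conv}) and the definition (\ref{eqn:entire-mass}) of $\tgam$ as $\frac{1}{2\pi}\int_{{\bf R}^2}\tf$, one picks $L_\varepsilon$ with $\frac{1}{2\pi}\int_{B_{L_\varepsilon}}\tf\geq\tgam-\varepsilon$ and uses $\tgamk\geq\frac{1}{2\pi}\int_{B_{L_\varepsilon}}\tfk\rightarrow\frac{1}{2\pi}\int_{B_{L_\varepsilon}}\tf$. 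That is two lines, needs no information about the charts beyond $\xi_k(0)=0$, and does not invoke the macroscopic concentration (\ref{eqn:concentration}) at all. You instead work at the macroscopic scale, using $s=0$, $\nu=n(x_0)\delta_{x_0}$ and the Portmanteau inequality on a fixed open ball $B_\delta(x_0)\subset\Psi_k^{-1}(B_{2R_0})$; the price is the uniform bi-Lipschitz control of the family $\Psi_k^{-1}$ on $B_{2R_0}$, which you correctly flag. That control is indeed implicit in the paper (two-sided uniform bounds on $e^{\xi_k}$ are used throughout), so your step is legitimate, but it is an extra geometric ingredient the paper's proof avoids. What your version buys is a transparent interpretation: the lemma says exactly that the vortex mass in a fixed neighborhood of $x_0$ is, in the limit, entirely accounted for by the single blow-up profile $\tf$ — i.e.\ no mass is lost at intermediate scales — with Proposition \ref{pro:7} (via (\ref{eqn:2-1})) guaranteeing that the two normalizations of $\tgam$ agree. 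Either proof is acceptable.
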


\begin{proof} From (\ref{eqn:def-tilde-f_n}), $\int_\Omega e^{\wkb}=1$, $\lambda_k\uparrow\bar{\lambda}$ and (\ref{eqn:1.13}), it follows that 
\begin{equation}
\tgamk=\frac{1}{2\pi}\int_{B_{2R_0/\sigma_k}}\tilde{f}_k\leq \frac{\lambda_k}{2\pi}\int_{I_+}\beta\calP(d\beta)\leq \tgam
 \label{eqn:gam-estimate-above}
\end{equation}
for any $k$. 
On the other hand, given $\varepsilon>0$, we have $L_\varepsilon>0$ such that 
\[
\liminf_{k\rightarrow\infty}\tgamk
\geq \liminf_{k\rightarrow\infty}\left(\frac{1}{2\pi}\int_{B_{L_\varepsilon}}\tilde{f}_k\right)
\geq \tgam-\varepsilon
\]
by (\ref{eqn:entire-conv}), (\ref{eqn:entire-mass}) and (\ref{eqn:2.17}). 
\end{proof}

\begin{lem}
For every $0<\varepsilon\ll 1$, there exist $\tilde{R}_\varepsilon\geq 2$ and $C_{\const\insc,\varepsilon}>0$ such that 
\begin{equation} 
\twk(x)\leq -(\tgamk-\varepsilon)\log \vert x\vert+C_{\subc,\varepsilon} 
 \label{eqn:3.11}
\end{equation} 
for $k\gg 1$ and $x\in B_{R_0/\sigma_k}\setminus B_{\tilde{R}_\varepsilon}$. 
 \label{lem:3-2}
\end{lem}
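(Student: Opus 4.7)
The plan is to start from the representation formula derived just above Lemma~3.3, namely
\[
\twk(x)=\frac{1}{2\pi}\int_{B_{2R_0/\sigma_k}}\tilde{f}_k(y)\log\frac{|y|}{|x-y|}\,dy+O(1),
\]
and add $\tgamk\log|x|$ (using the definition of $\tgamk$ in (3.8)) to rewrite this as
\[
\twk(x)+\tgamk\log|x|=\frac{1}{2\pi}\int_{B_{2R_0/\sigma_k}}\tilde{f}_k(y)\log\frac{|y|\,|x|}{|x-y|}\,dy+O(1),
\]
so that the goal becomes showing that the right-hand side is bounded by $\varepsilon\log|x|+C_\varepsilon$ for $|x|$ and $k$ large enough.

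To estimate the integrand, I would split $B_{2R_0/\sigma_k}$ into three annular regions relative to a large fixed $|x|$: $D_1=\{|y|\le |x|/2\}$, $D_2=\{|x|/2<|y|<2|x|\}$, and $D_3=\{|y|\ge 2|x|\}\cap B_{2R_0/\sigma_k}$. On $D_1$ we have $|x-y|\ge |x|/2$, hence $\log(|y||x|/|x-y|)\le \log(2|y|)$, so this contribution is dominated by $\frac{1}{2\pi}\int_{{\bf R}^2}\tilde{f}_k(y)\log(2+|y|)\,dy$, which is uniformly bounded in $k$ once one uses the decay $\tilde{f}_k(y)\le C|y|^{-\binf\tgam}$ with $\binf\tgam>2$ coming from Lemma~2.4 together with the $C^2_{loc}$-convergence $\tilde{f}_k\to\tilde{f}$. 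On $D_3$ we have $|x-y|\ge |y|/2$, hence $\log(|y||x|/|x-y|)\le \log(2|x|)$, and the total mass $\int_{D_3}\tilde{f}_k\,dy$ can be made $<\varepsilon$ for all $k\gg 1$ and all $|x|\ge\tilde{R}_\varepsilon$, using $\tgamk\to\tgam$ from (3.9) and that $\int_{B_L}\tilde{f}_k\to\int_{B_L}\tilde f$ by local convergence. On $D_2$ one writes $\log(|y||x|/|x-y|)\le 2\log(2|x|)-\log|x-y|$; the term $2\log|x|$ is multiplied by $\int_{D_2}\tilde{f}_k\,dy<\varepsilon$ (same argument as for $D_3$), while the logarithmic singularity $-\log|x-y|$ is integrable against the uniformly bounded $\tilde{f}_k$ (the bound (2.10) on $\tfk$), giving an $O(1)$ contribution.

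Adding the three estimates gives
\[
\twk(x)+\tgamk\log|x|\le \varepsilon\log|x|+C_{\subc,\varepsilon}
\]
for all $|x|\ge\tilde{R}_\varepsilon$ and $k\gg 1$, which is the desired bound.

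The main obstacle is the uniform-in-$k$ control of the tails $\int_{\{|y|\ge L\}}\tilde{f}_k\,dy$ and of $\int\tilde{f}_k(y)\log(2+|y|)\,dy$. Pointwise local $C^2_{loc}$-convergence is not in itself enough; one needs to promote it to a uniform decay estimate of the form $\tilde{f}_k(y)\le C\,(1+|y|)^{-(2+\delta)}$ on the range $\{1\le|y|\le 2R_0/\sigma_k\}$. This follows by dominating $\tilde{f}_k(y)\le C\!\int_{I_+}\!\beta e^{\beta\twk(y)+\tilde{\xi}_k(y)}\calP(d\beta)$, using $\twkb\le\beta\twk$ from (2.11)--(2.12), and then invoking the uniform lower bound on $\twk$ together with the fact that $\binf\tgam>2$ to gain the necessary margin $\delta>0$; this is the only place where a genuinely global (rather than local) estimate is needed.
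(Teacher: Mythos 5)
Your overall strategy --- the representation formula, adding $\tgamk\log|x|$ to both sides, and an annular decomposition in which every region away from a fixed neighbourhood of the origin carries mass at most $O(\varepsilon)$ --- is the same as the paper's, and your treatment of $D_2$ and $D_3$ by the tail-mass argument (combining $\tgamk\rightarrow\tgam$ from (\ref{eqn:3.9}) with the locally uniform convergence $\tfk\rightarrow\tf$ of (\ref{eqn:entire-conv})) is exactly the mechanism the paper uses. The one genuine gap is your estimate on $D_1=\{|y|\le|x|/2\}$: you bound its contribution by $\frac{1}{2\pi}\int\tfk(y)\log(2+|y|)\,dy$ and claim this is uniformly bounded because $\tfk(y)\le C|y|^{-\binf\tgam}$ with $\binf\tgam>2$. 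No such uniform pointwise decay of $\tfk$ on the expanding domain $B_{2R_0/\sigma_k}$ is available at this stage: $C^2_{loc}$-convergence controls $\tfk$ only on compact sets, and in the paper both the decay estimate for $\tfk$ and the bound $\int\tfk(y)|\log|y||\,dy=O(1)$ of Lemma \ref{lem:3.4} are deduced \emph{from} the present lemma, via $\twkb\le\beta\twk$ and the upper bound (\ref{eqn:3.11}) itself. Your proposed derivation of the decay in the last paragraph (``invoking the uniform lower bound on $\twk$'') is therefore circular: to make $e^{\beta\twk(y)}$ decay at rate $|y|^{-(2+\delta)}$ you need precisely the upper bound on $\twk$ that the lemma asserts, and a lower bound on $\twk$ points the wrong way in any case.

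The gap is reparable with the same device you already deploy on $D_2$ and $D_3$, and this is in essence what the paper does: choose $\tilde R_\varepsilon$ so that $\frac{1}{2\pi}\int_{B_{\tilde R_\varepsilon/2}}\tfk\ge\tgamk-\varepsilon/3$ for $k\gg1$. Then on $D_1\cap B_{\tilde R_\varepsilon/2}$ one has $\log(2|y|)\le\log(2\tilde R_\varepsilon)$ while the total mass of $\tfk$ is $O(1)$, and on $D_1\setminus B_{\tilde R_\varepsilon/2}$ one has $\log(2|y|)\le\log(2\cdot|x|/2)=\log|x|$ while the mass there is at most $2\pi\varepsilon/3$; hence the $D_1$ contribution is $\le\frac{\varepsilon}{3}\log|x|+C_\varepsilon$, with no pointwise decay of $\tfk$ needed anywhere. (The paper organizes the same computation slightly differently: the fixed ball $B_{\tilde R_\varepsilon/2}$ is the region producing the main term $-(\tgamk-\varepsilon/3)\log|x|$ in $K_k^1$, the neighbourhood of $x$ is handled as in your $D_2$, and the remainder as in your $D_3$.) With that one step replaced, your proof is correct and coincides with the paper's.
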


\begin{proof}  By (\ref{eqn:3.9}) and (\ref{eqn:entire-conv}), given $0<\varepsilon\ll 1$, we can take $\tilde{R}_\varepsilon\geq 2$ such that 
\begin{equation}
\frac{1}{2\pi}\int_{B_{\tilde{R}_\varepsilon/2}}\tilde{f}_k\geq \tgamk-\varepsilon/3
 \label{eqn:3-3}
\end{equation}
for $k\gg 1$. 
It follows from (\ref{eqn:3-2'}) that 
\begin{equation}
\twk(x)=K^1_k(x)+K^2_k(x)+K^3_k(x)+O(1), \quad k\rightarrow\infty 
 \label{eqn:3-4}
\end{equation}
uniformly in $x\in B_{R_0/\sigma_k}\setminus B_{\tilde{R}_\varepsilon}$, where
\begin{align*}
&K^1_k(x)=\frac{1}{2\pi}\int_{B_{\tilde{R}_\varepsilon/2}}\tilde{f}_k(y)\log\frac{\vert y\vert}{\vert x-y\vert}dy \\ 
&K^2_k(x)=\frac{1}{2\pi}\int_{B_{\vert x\vert/2}(x)}\tilde{f}_k(y)\log\frac{\vert y\vert}{\vert x-y\vert}dy \\
&K^3_k(x)=\frac{1}{2\pi}\int_{B'(x)}\tilde{f}_k(y)\log\frac{\vert y\vert}{\vert x-y\vert}dy 
\end{align*}
for $B'(x)=B_{2R_0/\sigma_k}\setminus(B_{\tilde{R}_\varepsilon/2}\cup B_{\vert x\vert/2}(x))$. 

Since 
\[ \frac{\vert y\vert}{\vert x-y\vert}\leq 2\frac{\vert y\vert}{\vert x\vert}\leq \frac{\tilde{R}_\varepsilon}{\vert x\vert}, \quad y\in B_{\tilde{R}_\varepsilon/2}, \ x\in B_{R_0/\sigma_k}\setminus B_{\tilde{R}_\varepsilon}, \] 
there exists $C_{\const\insd,\varepsilon}>0$ independent of $k\gg 1$ and $x$ such that  
\begin{equation}
K^1_k(x)\leq \frac{1}{2\pi}(\log\tilde{R}_\varepsilon-\log\vert x\vert)\int_{B_{\tilde{R}_\varepsilon/2}}\tilde{f}_k \leq C_{\subd,\varepsilon}-(\tgamk-\varepsilon/3)\log\vert x\vert
 \label{eqn:3-5}
\end{equation}
for $k\gg 1$ and $x\in B_{R_0/\sigma_k}\setminus B_{\tilde{R}_\varepsilon}$ by (\ref{eqn:3-3}). 
We also have 
\[ \frac{\vert y\vert}{\vert x-y\vert}\leq 3, \quad y\in B_{2R_0/\sigma_k}\setminus B_{\vert x\vert/2}(x), \] 
and hence 
\begin{align}
K^3_k(x)&\leq \frac{\log 3}{2\pi}\int_{B'(x)}\tilde{f}_k \leq \frac{\log 3}{2\pi}\Vert \tilde{f}_k\Vert_{L^1(B_{2R_0/\sigma_k})} \nonumber\\
&\leq \frac{\lambda_k\log 3}{2\pi}\int_{I_+}\beta\calP(d\beta)\cdot \sup_{B_{R_0}}e^{\xik}
 \label{eqn:3-5'}
\end{align}
for $k\gg 1$ and $x\in B_{R_0/\sigma_k}\setminus B_{\tilde{R}_\varepsilon}$. 

Now we take 
\[ D_1(x)=B_{1/\vert x\vert}(x),\quad D_2(x)=B_{\vert x\vert/2}(x)\setminus B_{1/\vert x\vert}(x). \]
Since 
\[ \vert y\vert<\vert x\vert+1/\vert x\vert, \quad y\in D_1(x) \] 
and 
\[ \frac{\vert y\vert}{\vert x-y\vert}\leq \frac{3}{2}|x|^2, \quad y\in D_2(x), \ x\in{\bf R}^2\setminus B_{\sqrt{2}}, \] 
we have 
\begin{align}
&K^2_k(x)=\frac{1}{2\pi}\int_{D_1(x)\cup D_2(x)}\tilde{f}_k(y)\log\frac{\vert y\vert}{\vert x-y\vert}dy \nonumber\\
&\quad \leq \frac{1}{2\pi}\int_{D_1(x)}\tilde{f}_k(y)\log\frac{\vert x\vert+1/\vert x\vert}{\vert x-y\vert}dy +\frac{2\log|x|+\log (3/2)}{2\pi}\int_{D_2(x)}\tilde{f}_k \nonumber\\
&\quad \leq \frac{\Vert \tilde{f}_k\Vert_{L^\infty(D_1(x))}}{2\pi}\int_{D_1(x)}\log\frac{1}{\vert x-y\vert}dy
+\frac{\log(3\vert x\vert/2)}{2\pi}\int_{D_1(x)}\tilde{f}_k \nonumber\\
&\quad +\frac{2\log\vert x\vert+\log (3/2)}{2\pi}\int_{D_2(x)}\tilde{f}_k \nonumber\\
&\quad \leq \frac{\Vert\tilde{f}_k\Vert_{L^\infty(D_1(x))}}{2\pi}\int_{D_1(x)}\log\frac{1}{\vert x-y\vert}dy+\frac{\log (3/2)}{2\pi}\int_{B_{|x|/2}(x)}\tilde{f}_k \nonumber\\ 
&\quad +\frac{\log\vert x\vert}{\pi}\int_{B_{\vert x\vert/2}(x)}\tilde{f}_k \leq C_{\const\inse}+\frac{2\varepsilon}{3}\log\vert x\vert 
 \label{eqn:3-5''}
\end{align}
for some $C_{\sube}>0$ independent of $x\in B_{R_0/\sigma_k}\setminus B_{\tilde{R}_\varepsilon}$, $k\gg 1$, and $\varepsilon$. 

Here, the last inequality of (\ref{eqn:3-5''}) follows from (\ref{eqn:3-3}) and (\ref{eqn:3.9}). 
Properties (\ref{eqn:3-4})-(\ref{eqn:3-5''}) imply (\ref{eqn:3.11}). 
\end{proof}

\begin{lem}
It holds that 
\begin{equation}
\int_{B_{2R_0/\sigma_k}}\tilde{f}_k(y)\left\vert \log\vert y\vert \right\vert dy=O(1) \quad \mbox{as $k\rightarrow\infty$}. 
 \label{eqn:3-6}
\end{equation}
 \label{lem:3.4}
\end{lem}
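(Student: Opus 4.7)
The plan is to split the integration domain into three pieces adapted to where $|\log|y||$ and where the pointwise decay of $\tilde f_k$ dominate, namely
\[
B_1,\qquad B_{\tilde R_\varepsilon}\setminus B_1,\qquad B_{2R_0/\sigk}\setminus B_{\tilde R_\varepsilon},
\]
with $\tilde R_\varepsilon\geq 2$ the radius produced by Lemma \ref{lem:3-2} for a small $\varepsilon>0$ to be chosen. On $B_1$, the pointwise bound on $\tilde f_k$ coming from (\ref{eqn:estimate-tfk}) gives $\tilde f_k\leq C$ uniformly in $k$, so the contribution is controlled by $C\int_{B_1}|\log|y||\,dy<\infty$. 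On the intermediate annulus, $|\log|y||\leq \log\tilde R_\varepsilon$ is uniformly bounded, so we bound the contribution by $\log\tilde R_\varepsilon\cdot\Vert \tilde f_k\Vert_{L^1}\leq \log\tilde R_\varepsilon\cdot\lambda_k\int_{I_+}\beta\calP(d\beta)\cdot\sup_ke^{\txik}=O(1)$, thanks to the integral bound in (\ref{eqn:2.5}).

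The main step is the outer annulus, where $|\log|y||=\log|y|$. Here I would combine two ingredients. First, (\ref{eqn:relation-w}) together with (\ref{eqn:2.12}) yields the pointwise bound $\twkb(y)\leq \beta\twk(y)$ for every $\beta\in I_+\setminus\{0\}$, hence
\[
\tfk(y)\leq \lambda_k\,\sup_ke^{\tilde\xi_k}\int_{I_+}\beta e^{\beta \twk(y)}\calP(d\beta).
\]
Second, Lemma \ref{lem:3-2} gives $\twk(y)\leq -(\tgamk-\varepsilon)\log|y|+C_{\subc,\varepsilon}$ on $B_{R_0/\sigk}\setminus B_{\tilde R_\varepsilon}$, so
\[
e^{\beta\twk(y)}\leq e^{\beta C_{\subc,\varepsilon}}|y|^{-\beta(\tgamk-\varepsilon)}.
\]
Applying Fubini and passing to polar coordinates, the outer-piece contribution is bounded by
\[
C\int_{I_+}\beta e^{\beta C_{\subc,\varepsilon}}\left(\int_{\tilde R_\varepsilon}^{2R_0/\sigk} r^{1-\beta(\tgamk-\varepsilon)}\log r\,dr\right)\calP(d\beta).
\]
The inner radial integral is finite, uniformly in $k$ and in $\beta\in\supp\calP$, provided there exists $\delta>0$ with $\beta(\tgamk-\varepsilon)\geq 2+\delta$ for all $\beta\in\supp\calP$ and all $k\gg 1$.

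This uniform gap is exactly where Proposition \ref{pro:6} is used: since $\amin\tgam>2$ strictly, and $\tgamk\to\tgam$ by (\ref{eqn:3.9}), we can choose $\varepsilon>0$ so small that $\amin(\tgamk-\varepsilon)\geq 2+\delta$ for $k\gg 1$, and $\supp\calP\subset[\amin,1]$ then provides the uniform lower bound for all $\beta$ in the support. Once $\varepsilon$ is locked in this way, $\tilde R_\varepsilon$ and $C_{\subc,\varepsilon}$ are fixed, the radial integrand is bounded by $r^{-1-\delta}\log r$, and the $\beta$-integration is finite since $\beta e^{\beta C_{\subc,\varepsilon}}$ is bounded on $I_+$. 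Collecting the three estimates yields (\ref{eqn:3-6}).

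The main obstacle is ensuring the uniform exponent gap $\beta(\tgamk-\varepsilon)>2$ over the whole support of $\calP$; this rests crucially on the strict inequality $\amin>2/\tgam$ established in Proposition \ref{pro:6}, which itself depends on the residual vanishing assumption through Proposition \ref{pro:7}. Without this strict inequality the outer radial integral would diverge at low $\beta$, and the argument would break down.
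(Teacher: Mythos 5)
Your proof is correct and follows essentially the same route as the paper: split off a bounded ball near the origin (where the uniform $L^\infty$ bound on $\tilde f_k$ handles the integrable singularity of $\log|y|$), and on the exterior combine $\twkb\leq\beta\twk$ with the decay from Lemma \ref{lem:3-2}, using the strict gap $\amin\tgam>2$ of Proposition \ref{pro:6} together with $\tgamk\to\tgam$ to secure a uniform exponent $2+\delta$ over $\supp\calP$. The only cosmetic difference is that the paper merges your first two regions into a single ball $B_{\tilde R_0}$ and pushes the $\beta$-integration inside to get the pointwise bound $\tilde f_k(y)\leq C|y|^{-(2+3\delta_0)}$ before integrating, whereas you integrate in $\beta$ last via Fubini.
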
 

\begin{proof} \ By (\ref{eqn:1.12h}) and (\ref{eqn:3.9}), there exist $\varepsilon_0>0$ and $\delta_0>0$ such that 
\begin{equation}
-\amin(\tgamk-\varepsilon_0/2)\leq -(2+3\delta_0) 
 \label{eqn:3-6'}
\end{equation}
for $k\gg 1$. Let 
\begin{equation}
\tilde{R}_0=\tilde{R}_{\varepsilon_0/2} 
 \label{eqn:def-tilde-R0}
\end{equation}
for $\tilde{R}_\varepsilon$ as in Lemma \ref{lem:3-2} with $\varepsilon=\varepsilon_0/2$. 
Then, by (\ref{eqn:relation-w})-(\ref{eqn:2.12}), (\ref{eqn:3.11}) and (\ref{eqn:3-6'}) we obtain $C_{\const\insf,\varepsilon_0}>0$ such that 
\begin{align}
&\tilde{f}_k(y)=\lambda_k\int_{I_+}\beta e^{\twkb(y)+\txik(y)}\calP(d\beta) \nonumber\\ 
&\quad \leq \lambda_k\int_{I_+}\beta e^{\beta\twk(y)+\txik(y)}\calP(d\beta) \nonumber\\
&\quad \leq \lambda_k\int_{I_+}\exp\left[-\beta\{(\tgamk-\varepsilon_0/2)\log\vert y\vert-C_{\subc,\varepsilon_0}\}+\sup_{B_{2R_0}}\xik\right]\calP(d\beta) \nonumber\\
&\quad \leq C_{\subf,\varepsilon_0}\vert y\vert^{-(2+3\delta_0)} 
 \label{eqn:3-6''}
\end{align}
for $k\gg 1$ and $y\in B_{R_0/\sigma_k}\setminus B_{\tilde{R}_0}$. 

Therefore, we obtain $C_{\const\insg,\varepsilon_0,\delta_0}>0$ independent of $k\gg 1$ such that 
\begin{align*}
&\int_{B_{2R_0/\sigma_k}}\tilde{f}_k(y)\left\vert \log\vert y\vert\right\vert dy \leq \Vert \tilde{f}_k\Vert_{L^\infty(B_{2R_0/\sigma_k})}\int_{B_{\tilde{R}_0}}\left\vert \log\vert y\vert\right\vert dy \\ 
&\quad +C_{\subf,\varepsilon_0}\int_{{\bf R}^2\setminus B_{\tilde{R}_0}}\vert y\vert^{-(2+3\delta_0)}\log\vert y\vert dy \leq C_{\subg,\varepsilon_0,\delta_0} 
\end{align*}
for $k\gg 1$, which means (\ref{eqn:3-6}). 
\end{proof} 

\begin{lem}
There exists $\delta_0>0$ such that
\[ \twk(x)=-\tgamk\log\vert x\vert+O(1) \quad \mbox{as $k\rightarrow\infty$} \]
uniformly in $x\in B_{R_0/\sigma_k}\setminus B_{(\log\sigma_k^{-1})^{1/\delta_0}}$. 
 \label{lem:3-3}
\end{lem}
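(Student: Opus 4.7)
My plan is to start from the integral representation (\ref{eqn:3-2'}) and reduce the desired estimate to a single remainder bound. Using (\ref{eqn:3-2'}) and the algebraic identity $\log(|y|/|x-y|)=\log|y|+\log(|x|/|x-y|)-\log|x|$, the contribution of $\log|y|$ is $O(1)$ by Lemma \ref{lem:3.4}, and the $-\log|x|$ term integrates to $-\tgamk\log|x|$ by the definition (\ref{eqn:def-tilgam}) of $\tgamk$. I therefore reduce the problem to showing
\[
J_k(x):=\int_{B_{2R_0/\sigk}} \tfk(y)\log\frac{|x|}{|x-y|}\,dy = O(1)
\]
uniformly in $x\in B_{R_0/\sigk}\setminus B_{(\log\sigk^{-1})^{1/\delta_0}}$.

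\textbf{Three-region split.} To estimate $J_k$ I would partition $B_{2R_0/\sigk}$ into $D_1\cup D_2\cup D_3$ with $D_1=\{|y|\le|x|/2\}$, $D_2=\{|x-y|\le|x|/2\}$, and $D_3=\{|y|>|x|/2,\ |x-y|>|x|/2\}$. On $D_1$ we have $|x-y|/|x|\in[1/2,3/2]$, so the integrand is $O(1)$ and the contribution is $O(\tgamk)=O(1)$. On $D_2$, the lower bound on $|x|$ guarantees $|y|\ge|x|/2\ge\tilde R_0$ for $k\gg 1$, hence (\ref{eqn:3-6''}) yields $\tfk(y)\le C|x|^{-(2+3\delta_0)}$; combined with the elementary $\int_{|u|\le|x|/2}\log(|x|/|u|)\,du=O(|x|^2)$, this region contributes $O(|x|^{-3\delta_0})=O(1)$. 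Split $D_3=D_3'\cup D_3''$ with $D_3'\subset B_{R_0/\sigk}$ and $D_3''\subset B_{2R_0/\sigk}\setminus B_{R_0/\sigk}$. On $D_3'$ I would apply (\ref{eqn:3-6''}) together with $|\log(|x|/|x-y|)|\le \log(3|y|/|x|)+O(1)$ (from $|x-y|\le|x|+|y|\le 3|y|$ and $|x-y|\ge|x|/2$); the substitution $s=|y|/|x|$ in polar coordinates then yields $O(|x|^{-3\delta_0})=O(1)$.

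\textbf{Outer annulus and main obstacle.} The last and most delicate piece is $D_3''$, where (\ref{eqn:3-6''}) is unavailable. Here I would use the residual vanishing $s=0$, the concentration (\ref{eqn:concentration}), and Lemma \ref{lem:3-1}: after the change of variable $z=\sigk y$,
\[
\int_{B_{2R_0/\sigk}\setminus B_{R_0/\sigk}} \tfk(y)\,dy=\int_{B_{2R_0}\setminus B_{R_0}} \lambda_k\int_{I_+}\beta e^{\wkb(z)+\xik(z)}\calP(d\beta)\,dz.
\]
The asymptotic $\wkb(z)=\bar{w}_{k,\beta}+\beta\bar{\lambda}\bigl(\int_{I_+}\beta'\calP(d\beta')\bigr)G(z,x_0)+o(1)$ from Lemma \ref{lem:3-1}, together with $\int_\Omega e^{\wkb}=1$ and the blowup profile, forces $\bar{w}_{k,\beta}\to-\infty$ polynomially in $\sigk$, so that the above annular integral is $O(\sigk^{2a})$ for some $a>0$. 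On the other hand, on $D_3''$, using the lower bound on $|x|$,
\[
\Bigl|\log\frac{|x|}{|x-y|}\Bigr|\le \log\frac{2R_0}{\sigk|x|}+O(1)\le \log\sigk^{-1}+O(1),
\]
and so the product of the two factors is $O(\sigk^{2a}\log\sigk^{-1})=o(1)$. The main obstacle is thus to extract a quantitative polynomial-in-$\sigk$ decay for $\bar{w}_{k,\beta}$ (equivalently, for the outer-annular mass of $e^{\wkb}$) out of the blowup analysis; the precise lower bound $|x|\ge(\log\sigk^{-1})^{1/\delta_0}$ is calibrated exactly so that even a mild polynomial decay rate defeats the worst-case logarithmic factor $\log\sigk^{-1}$ above.
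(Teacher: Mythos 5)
Your reduction and region decomposition reproduce, in substance, the paper's own argument: the paper works with $\twk(x)+\tgamk'(x)\log|x|$ for the truncated mass $\tgamk'(x)=\frac{1}{2\pi}\int_{B_{|x|/2}}\tfk$ of (\ref{eqn:3.21}), bounds the near, middle and far pieces exactly as you do on $D_1$, $D_2$, $D_3'$ (see (\ref{eqn:3-7})--(\ref{eqn:3.25}) and (\ref{eqn:2.36})), and then recovers $\tgamk$ from $\tgamk'(x)$ via $(\tgamk-\tgamk'(x))\log|x|\leq C(\log\sigk^{-1})^{-3}\log(\sigk^{-1}R_0)=O(1)$. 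Up to this reorganization the two proofs coincide, and your estimates on $D_1$, $D_2$ and $D_3'$ are correct.

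The genuine gap is your treatment of $D_3''$, and the route you propose there is circular in this paper. The quantitative statement $e^{\bar{w}_{k,\beta}}=O(\sigk^{2a})$ with $a>0$ is obtained in (\ref{eqn:mean-wka}) precisely from the Li-type estimate (\ref{eqn:4.1-2}), i.e.\ from Proposition \ref{pro:yyl-type-estimate}, whose proof rests on the present lemma; residual vanishing by itself only gives $\bar{w}_{k,\beta}\rightarrow-\infty$ with no rate, and $\int_\Omega e^{\wkb}=1$ gives no quantitative bound on the outer-annular mass. So, as you yourself note, the "main obstacle" is not overcome. Fortunately the detour is unnecessary: the pointwise decay (\ref{eqn:3-6''}) extends to all of $B_{2R_0/\sigk}\setminus B_{\tilde{R}_0}$. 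Indeed, for $R_0/\sigk\leq |y|\leq 2R_0/\sigk$ the point $\sigk y$ lies in $\overline{B_{2R_0}}\setminus B_{R_0}$, which corresponds to a compact subset of $\Omega\setminus\{x_0\}$, so the oscillation bound (\ref{eqn:3.2}) of Lemma \ref{lem:3-1} gives $\twk(y)=\twk(R_0 y/(\sigk|y|))+O(1)$, and Lemma \ref{lem:3-2} applies at the latter point; since $R_0/\sigk\geq |y|/2$, this yields $\tfk(y)\leq C|y|^{-(2+3\delta_0)}$ on the whole of $B_{2R_0/\sigk}\setminus B_{\tilde{R}_0}$, after which your $D_2$ and $D_3'$ computations cover $D_3''$ as well. (The paper uses this extension tacitly in (\ref{eqn:3-7})--(\ref{eqn:3-8}) and already in Lemma \ref{lem:3.4}.) With that one repair your proof closes; without it, the $D_3''$ term is only $O(\log\sigk^{-1})$ from the trivial $L^1$ bound on $\tfk$, which is not enough.
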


\begin{proof} Let $\varepsilon_0>0$ and $\delta_0>0$ satisfy (\ref{eqn:3-6'}) and consider
\begin{equation} 
\tgamk'(x)=\frac{1}{2\pi}\int_{B_{\vert x\vert/2}}\tilde{f}_k 
 \label{eqn:3.21}
\end{equation}
for $x\in B_{R_0/\sigma_k}\setminus B_{(\log\sigma_k^{-1})^{1/\delta_0}}$ and $k\gg 1$. Since (\ref{eqn:3-6''}) holds, there exists $C_{\const\insh,\varepsilon_0,\delta_0}>0$ such that 
\begin{align}
&0\leq \tgamk-\tgamk'(x)\leq \frac{1}{2\pi}\int_{B_{2R_0/\sigma_k}\setminus B_{\frac{1}{2}(\log\sigma_k^{-1})^{1/\delta_0}}} \tilde{f}_k \nonumber\\ 
&\quad \leq \frac{1}{2\pi} \cdot C_{\subf,\varepsilon_0} \int_{B_{2R_0/\sigma_k}\setminus B_{\frac{1}{2}(\log\sigma_k^{-1})^{1/\delta_0}}} \vert y\vert^{-(2+3\delta_0)}dy \nonumber\\ 
&\quad \leq C_{\subh,\varepsilon_0,\delta_0}(\log\sigma_k^{-1})^{-3}
 \label{eqn:3-7}
\end{align}
for $x\in B_{R_0/\sigma_k}\setminus B_{(\log\sigma_k^{-1})^{1/\delta_0}}$ and $k\gg 1$. 

Similarly we have 
\begin{align}
&\left\vert \int_{B_{2R_0/\sigma_k}\setminus B_{\vert x\vert/2}} \tilde{f}_k(y)\log\frac{1}{\vert x-y\vert}dy \right\vert \nonumber\\ 
&\quad =\int_{(B_{2R_0/\sigma_k}\setminus B_{\vert x\vert/2})\cap\{\vert y-x\vert\leq 1\}} \tilde{f}_k(y)\log\frac{1}{\vert x-y\vert}dy \nonumber\\
&\quad +\int_{(B_{2R_0/\sigma_k}\setminus B_{\vert x\vert/2})\cap\{\vert y-x\vert>1\}} \tilde{f}_k(y)\log\vert x-y\vert dy \nonumber\\
&\quad \leq C_{\subf,\varepsilon_0}\left\{(\vert x\vert-1)^{-(2+3\delta_0)}\int_{B_1}\log\frac{1}{\vert y\vert}dy \right.\nonumber\\
&\quad \left.+\int_{({\bf R}^2\setminus B_{\vert x\vert/2})\cap\{\vert y-x\vert >1\}}\vert y\vert^{-(2+3\delta_0)}\log\vert x-y\vert dy \right\} \equiv I. 
 \label{eqn:3-8}
\end{align}
Since 
\[ \vert y\vert ^{-\delta_0}\log \vert x-y\vert \leq \vert y\vert^{-\delta_0}\log(\vert x\vert+\vert y\vert) \leq \vert y\vert^{-\delta_0}\log(3\vert y\vert ) \]
for 
\begin{equation} 
y\in({\bf R}^2\setminus B_{\vert x\vert/2})\cap\{\vert y-x\vert>1\}, \ x\in B_{R_0/\sigma_k}\setminus B_{(\log\sigma_k^{-1})^{1/\delta_0}} 
 \label{eqn:3.23}
\end{equation} 
and $k\gg 1$, we have $C_{\const\insj,\delta_0}>0$ such that 
\[ \vert y\vert^{-(2+3\delta_0)}\log\vert x-y\vert\leq C_{\subj,\delta_0}\vert y\vert^{-2(1+\delta_0)} \]
for $(x,y)$ in (\ref{eqn:3.23}) with $k\gg 1$.  Hence we have $C_{\const\insi,\varepsilon_0,\delta_0}>0$ such that
\begin{equation} 
I \leq C_{\subi,\varepsilon_0,\delta_0}(\log\sigma_k^{-1})^{-2}. 
 \label{eqn:3.25}
\end{equation} 

Now we see from (\ref{eqn:3-2'}) and (\ref{eqn:3.21}) that 
\begin{align*}
&\left\vert \twk(x)+\tgamk'(x)\log\vert x\vert\right\vert \leq \frac{1}{2\pi}\int_{B_{2R_0/\sigma_k}}\tilde f_k(y)\left\vert \log \vert y\vert \right\vert dy \\
&\quad + \frac{1}{2\pi}\left\vert \int_{B_{2R_0/\sigma_k}\setminus B_{\vert x\vert/2}}\tilde f_k(y)\log\frac{1}{\vert x-y\vert} dy \right\vert \nonumber\\ 
&\quad +\frac{1}{2\pi}\int_{B_{\vert x\vert/2}}\tilde f_k(y)\left\vert \log \frac{\vert x\vert}{\vert x-y\vert}\right\vert dy +O(1) \\ 
&\quad \leq \frac{1}{2\pi}\int_{B_{2R_0/\sigma_k}}\tilde{f}_k(y) \left\vert \log\vert y\vert \right\vert dy +\frac{\log 2}{2\pi}\|\tilde{f}_k\|_{L^1(B_{2R_0/\sigma_k})} \\
&\quad +\frac{1}{2\pi}\left\vert \int_{B_{2R_0/\sigma_k}\setminus B_{\vert x\vert/2}}\tilde{f}_k(y)\log\frac{1}{\vert x-y\vert}dy \right\vert +O(1)  
\end{align*} 
for $x\in B_{R_0/\sigma_k}\setminus B_{(\log\sigma_k^{-1})^{1/\delta_0}}$.  Therefore, it holds that 
\begin{equation}
\left\vert \twk(x)+\tgamk'(x)\log\vert x\vert\right\vert =O(1) \quad \mbox{as $k\rightarrow\infty$}
 \label{eqn:2.36}
\end{equation} 
by (\ref{eqn:3-6}), (\ref{eqn:3-8}) with (\ref{eqn:3.25}), and the uniform $L^1$ boundedness of $\tilde{f}_k$.  Then (\ref{eqn:3-7}) and (\ref{eqn:2.36}) imply 
\begin{align*}
&\left\vert \twk(x)+\tgamk\log\vert x\vert\right\vert \leq (\tgamk-\tgamk'(x))\log\vert x\vert +\left\vert \twk(x)+\tgamk'(x)\log\vert x\vert\right\vert \\
&\leq C_{\subh,\varepsilon_0,\delta_0}(\log\sigma_k^{-1})^{-3}\log(\sigma_k^{-1}R_0)+O(1) =O(1) \quad \mbox{as $k\rightarrow\infty$}
\end{align*}
for $x\in B_{R_0/\sigma_n}\setminus B_{(\log\sigma_k^{-1})^{1/\delta_0}}$. 
\end{proof} 

Now we complete the proof of Proposition \ref{pro:yyl-type-estimate}. \\

\underline{{\it Proof of Proposition \ref{pro:yyl-type-estimate}}:} \ We take $\delta_0$ and $\tilde{R}_0$ as in (\ref{eqn:3-6'}) and (\ref{eqn:def-tilde-R0}), respectively. 
First, (\ref{eqn:entire-conv}) and (\ref{eqn:3-2'}) with (\ref{eqn:3.9}) imply 
\begin{align}
&\left\vert \twk(x)+\tgamk\log(1+\vert x\vert)\right\vert \leq\vert \tilde w_{k}(x)\vert+\tilde\gamma_k\log (1+\vert x\vert) \nonumber\\ 
&\quad \leq C_{\const\insk}, \quad x\in B_{\tilde{R}_0}, 
 \label{eqn:yyl-1}
\end{align} 
while Lemma \ref{lem:3-3} means 
\begin{equation}
\left\vert \twk(x)+\tgamk\log(1+\vert x\vert)\right\vert \leq C_{\const\insl}, \quad x\in B_{R_0/\sigma_k}\setminus B_{(\log\sigma_k^{-1})^{1/\delta_0}}, 
 \label{eqn:yyl-2}
\end{equation}
where $k\gg 1$.  

Now we put 
\begin{align*}
&\twk^+(x)=-\tgamk\log\vert x\vert+C_{\const\insm}+\frac{C_{\subf,\varepsilon_0}}{9\delta_0^2}\vert x\vert^{-3\delta_0} \\ 
&\twk^-(x)=-\tgamk\log\vert x\vert-C_{\subm}-\frac{1}{4}\vert x\vert^2\tdk\sup_{B_{2R_0}}e^{\xik}
\end{align*}
for $C_{\subm}=1+\max\{C_{\subk},C_{\subl}\}$ and $k\gg 1$, recalling (\ref{eqn:def-tilde-f_n}), and let 
\[ A_k=B_{(\log\sigma_k^{-1})^{1/\delta_0}}\setminus \bar{B}_{\tilde{R}_0}.  \] 
 Then (\ref{eqn:3-6''}) implies  
\begin{align*}
&-\Delta\twk^+=C_{\subf,\varepsilon_0}\vert x\vert^{-(2+3\delta_0)}\geq \tilde{f}_k-\tdk e^{\xik} \quad \mbox{in $A_k$} \\
&\twk^+ \geq \twk  \quad \mbox{on $\partial A_k$}. 
\end{align*}
Next, we have 
\begin{align*}
&-\Delta\twk^-=-\tdk\sup_{B_{2R_0}}e^{\xik} \leq \tilde{f}_k-\tdk e^{\xik} \quad \mbox{in $A_k$} \\
&\twk^- \leq \twk \quad \mbox{on $\partial A_k$}. 
\end{align*} 
Since $-\Delta\twk=\tilde{f}_k-\tdk e^{\xik}$ in $A_k$, it follows from the maximum principle that 
\begin{equation} 
\twk^-\leq \tw\leq\twk^+ \quad \mbox{in $A_k$}. 
 \label{eqn:3.29}
\end{equation} 
Using 
\[ \left\vert \frac{1}{4}\vert x\vert^2\tdk\right\vert \leq C_{\const\insn}, \quad x\in B_{R_0/\sigma_k} \] 
and 
\[ \left|\frac{C_{\subf,\varepsilon_0}}{9\delta_0^2}\vert x\vert^{-3\delta_0}\right|\leq C_{\const\inso}, \quad x\in A_k, \] 
we obtain 
\begin{equation}
\left\vert \twk(x)+\tgamk\log\vert x\vert \right\vert\leq C_{\subm}+\max\{C_{\subn},C_{\subo}\}, \quad x\in A_k 
 \label{eqn:yyl-3}
\end{equation}
for $k\gg 1$. 

Properties (\ref{eqn:yyl-1})-(\ref{eqn:yyl-3}), (\ref{eqn:2.17}) and (\ref{eqn:3.9}) imply (\ref{eqn:1.16}) for $\alpha=1$, 
\[
\wk(x)-\wk(0)=-\left(\frac{4}{\int_{I_+}\beta \calP(d\beta)}+o(1)\right)\log (1+e^{\wk(0)/2}\vert x\vert)+O(1). 
\] 
The other case of $\alpha$ follows from the relation $(\wka(x)-\wka(0))=\alpha(w_{k}(x)-w_{k}(0))$, and the proof is complete. \qed

\section{Proof of Theorem \ref{thm:main}}\label{sec:proof}

We begin with the following lemma. 

\begin{lem} 
It holds that 
\begin{equation} 
\wka(0)=\wk(0)+O(1) \quad \mbox{as $k\rightarrow\infty$} 
 \label{eqn:4.1}
\end{equation}
uniformly in $\alpha\in [\amin,1]$.  
 \label{pro:cor-yyl-type-estimate}
\end{lem}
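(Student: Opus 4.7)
\ Substituting $v_k=w_k+\log\int_\Omega e^{v_k}$ into the definition (\ref{eqn:1.13h}) of $\wka$ gives the algebraic identity $\wka(0)=\alpha\wk(0)-\log\int_\Omega e^{\alpha\wk}$, and since $\int_\Omega e^{\wk}=1$ this rearranges to
\[
\wka(0)-\wk(0)=(\alpha-1)\wk(0)-\log\int_\Omega e^{\alpha\wk}.
\]
Hence (\ref{eqn:4.1}) is equivalent to the asymptotic
$\log\int_\Omega e^{\alpha\wk}=(\alpha-1)\wk(0)+O(1)$
uniformly for $\alpha\in[\amin,1]$, i.e.\ to two-sided bounds of the form $\int_\Omega e^{\alpha\wk}\asymp\sigk^{2(1-\alpha)}$, using $\sigk^2=e^{-\wk(0)}$.

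To establish this asymptotic I would split the integral as $\int_{\Psi_k^{-1}(B_{R_0})}+\int_{\Omega\setminus\Psi_k^{-1}(B_{R_0})}$. In the interior, apply Proposition \ref{pro:yyl-type-estimate} with $\alpha=1$, namely
\[
\wk(x)=\wk(0)-(\gamma_0+o(1))\log(1+e^{\wk(0)/2}|x|)+O(1)\quad\text{on }B_{R_0},
\]
multiply by $\alpha$, exponentiate, and perform the blow-up change of variables $y=x/\sigk$ to get
\[
\int_{B_{R_0}}e^{\alpha\wk+\xik}dx=e^{\alpha\wk(0)+O(1)}\sigk^2\int_{B_{R_0/\sigk}}(1+|y|)^{-\alpha(\gamma_0+o(1))}dy.
\]
By Proposition \ref{pro:6} we have $\amin\gamma_0>2$, so there is $\eta>0$ such that $\alpha(\gamma_0+o(1))\geq 2+\eta$ for every $\alpha\in[\amin,1]$ once $k$ is large; the $y$-integral is then trapped between positive constants independent of $(\alpha,k)$, yielding $\int_{\Psi_k^{-1}(B_{R_0})}e^{\alpha\wk}\asymp \sigk^{2(1-\alpha)}$.

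For the complement, Lemma \ref{lem:3-1} supplies $\wk=\bar{w}_k+O(1)$ uniformly on $\Omega\setminus\Psi_k^{-1}(B_{R_0})$, while evaluating the Y.Y.~Li estimate at a point with $|x|=R_0$ (where $\log(1+e^{\wk(0)/2}R_0)=\tfrac12\wk(0)+O(1)$) pins down $\bar{w}_k=-(\gamma_0/2-1)\wk(0)+O(1)$. Consequently
\[
\int_{\Omega\setminus\Psi_k^{-1}(B_{R_0})}e^{\alpha\wk}\leq Ce^{-\alpha(\gamma_0/2-1)\wk(0)},
\]
and the ratio of this bound to $\sigk^{2(1-\alpha)}=e^{(\alpha-1)\wk(0)}$ equals $e^{-(\alpha\gamma_0/2-1)\wk(0)}$, which tends to zero uniformly on $[\amin,1]$ since $\amin\gamma_0/2-1>0$ by Proposition \ref{pro:6}. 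Adding the two estimates gives the desired logarithmic asymptotic.

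The main technical difficulty is uniformity in $\alpha$: the $o(1)$ factor inside Proposition \ref{pro:yyl-type-estimate} multiplies $\log(1+e^{\wk(0)/2}|x|)$, a quantity of size up to $\wk(0)/2$, so pointwise control would be useless. It is only through the strict inequality $\amin\gamma_0>2$ of Proposition \ref{pro:6} that there remains a margin surviving the uniform $o(1)$ perturbation, and this single input is what simultaneously guarantees integrability of $(1+|y|)^{-\alpha(\gamma_0+o(1))}$ in the blown-up region and exponential smallness of the residual contribution, uniformly down to $\alpha=\amin$.
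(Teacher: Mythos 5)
Your argument is correct and takes a somewhat different route from the paper's. You begin from the exact algebraic identity $\wka(0)=\alpha\wk(0)-\log\int_\Omega e^{\alpha\wk}$ and then establish the two-sided asymptotic $\int_\Omega e^{\alpha\wk}\asymp\sigk^{2(1-\alpha)}$ directly, using a two-sided interior estimate from Proposition \ref{pro:yyl-type-estimate}, an upper bound on the exterior via Lemma \ref{lem:3-1}, and the margin $\amin\gamma_0>2$ from Proposition \ref{pro:6} to control the uniformity in $\alpha$. The paper instead first invokes the monotonicity (\ref{eqn:monotone-0})/(\ref{eqn:1.14}) to get $\wka(0)\le\wk(0)$ for free, reduces the problem to $\alpha=\amin$, and proves only the lower bound by contradiction: assuming $\wkamin(0)-\wk(0)\to-\infty$ along a subsequence, the one-sided interior bound $\int_{B_{R_0}}e^{\wkamin}\le Ce^{\wkamin(0)-\wk(0)}$ combined with the always-negligible exterior mass violates $\int_\Omega e^{\wkamin}=1$. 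Your version is a bit longer since you prove both bounds and do so for every $\alpha$, but it is more transparent about what the normalization is really pinning down, and it cleanly isolates the quantity $\log\int_\Omega e^{\alpha\wk}$. One small imprecision: when you write that evaluating the estimate at $|x|=R_0$ "pins down $\bar{w}_k=-(\gamma_0/2-1)\wk(0)+O(1)$", the $o(1)$ in (\ref{eqn:1.16}) multiplies $\log(1+e^{\wk(0)/2}R_0)=\wk(0)/2+O(1)$, so the correct statement is $\bar{w}_k=-(\gamma_0/2-1+o(1))\wk(0)$; this does not affect your conclusion because, as you yourself emphasize, the strict inequality $\amin\gamma_0>2$ leaves a margin that absorbs any uniform-in-$x$ $o(1)$ perturbation, so the ratio of exterior to interior mass still tends to zero uniformly on $[\amin,1]$.
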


\begin{proof} \ By the monotonicity (\ref{eqn:monotone-0}), we have only to show 
\begin{equation}
\wkamin(0)=\wk(0)+O(1). 
 \label{eqn:4.1-1}
\end{equation}
As shown in the previous section, estimate (\ref{eqn:1.16}) is equivalent to
\begin{equation}
\wka(x)-\wka(0)=-\alpha\tgamk\log(1+e^{\wk(0)/2}\vert x\vert)+O(1), 
 \label{eqn:4.1-2}
\end{equation}
where $\tgamk$ is as in (\ref{eqn:def-tilgam}). 
Since $\amin\tgamk\geq 2+3\delta_0$ for $k\gg 1$ by (\ref{eqn:3-6'}), we use (\ref{eqn:4.1-2}) to get 
\begin{align}
\int_{B_{R_0}}e^{\wkamin}&=O(1)\cdot e^{\wkamin(0)}\int_{B_{R_0}}(1+e^{\wk(0)/2}|x|)^{-\amin \tgamk}dx \nonumber\\ 
&=O(1)\cdot e^{\wkamin(0)-\wk(0)}\int_{B_{R_0/\sigk}}(1+|x|)^{-\amin \tgamk}dx \nonumber\\ 
&\leq O(1)\cdot e^{\wkamin(0)-\wk(0)}\int_{{\bf R}^2}(1+|x|)^{-(2+3\delta_0)}dx. 
 \label{eqn:4.1.-3}
\end{align}
If (\ref{eqn:4.1-1}) fails then (\ref{eqn:4.1.-3}) and Lemma \ref{lem:3-1} imply that $\wkamin\rightarrow -\infty$ uniformly in $\Omega\setminus B_{R_0/2}$, 
and therefore we conclude $\int_\Omega e^{\wkamin}\rightarrow 0$ as $k\rightarrow\infty$, which contradicts $\int_\Omega e^{\wkamin}=1$. 
\end{proof}

\begin{lem} 
It holds that 
\begin{equation} 
\limsup_{k\rightarrow\infty}\int_{I_+} (\bar{w}_{k,\alpha}+\wka(0))\calP(d\alpha)>-\infty. 
 \label{eqn:4.2}
\end{equation} 
\end{lem}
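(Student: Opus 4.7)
The plan is to turn $\bar{w}_{k,\alpha}+\wka(0)$ into an explicit expression in $v_k(0)$ and $\wka(0)$, reduce to $\alpha=1$ via Lemma \ref{pro:cor-yyl-type-estimate}, and then use Proposition \ref{pro:yyl-type-estimate} together with the normalization $\int_\Omega v_k=0$ to pin down $v_k(0)$ in terms of $\wk(0)$. From the definitions we have $\bar{w}_{k,\alpha}=-\log\int_\Omega e^{\alpha v_k}$ and $\wka(0)=\alpha v_k(0)+\bar{w}_{k,\alpha}$, hence $\bar{w}_{k,\alpha}+\wka(0)=2\wka(0)-\alpha v_k(0)$. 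Since $\supp\calP\subset[\amin,1]$, Lemma \ref{pro:cor-yyl-type-estimate} then gives
\[
\int_{I_+}\bigl(\bar{w}_{k,\alpha}+\wka(0)\bigr)\calP(d\alpha)=2\wk(0)-v_k(0)\int_{I_+}\alpha\,\calP(d\alpha)+O(1).
\]

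Next I will establish the relation $v_k(0)=\frac{\tgamk}{2}\wk(0)+O(1)$, where $\tgamk$ is as in (\ref{eqn:def-tilgam}). Since $\tgamk\to\gamma_0$ by (\ref{eqn:3.9}) and the estimates (\ref{eqn:yyl-1})--(\ref{eqn:yyl-3}) from Section \ref{sec:yyl} are actually proven with the coefficient $\tgamk$, the case $\alpha=1$ of Proposition \ref{pro:yyl-type-estimate} may be written as
\[
v_k(x)-v_k(0)=\wk(x)-\wk(0)=-\tgamk\log\bigl(1+e^{\wk(0)/2}|x|\bigr)+O(1),\quad x\in B_{R_0}.
\]
A polar-coordinate computation with the substitution $y=e^{\wk(0)/2}x$ yields
\[
\int_{B_{R_0}}\log\bigl(1+e^{\wk(0)/2}|x|\bigr)\,e^{\xik(x)}\,dx=\frac{A_k}{2}\wk(0)+O(1),\qquad A_k:=\int_{B_{R_0}}e^{\xik}\,dx,
\]
where $A_k$ is bounded and bounded away from $0$, since the isothermal charts vary smoothly as $x_k\to x_0$. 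Integrating the displayed Y.Y.\ Li estimate against the volume element $e^{\xik}dx$ over $B_{R_0}$, handling $\int_{\Omega\setminus\Psi_k^{-1}(B_{R_0})}v_k\,dx=O(1)$ via Lemma \ref{lem:3-1}, and invoking $\int_\Omega v_k\,dx=0$ produces $A_k\bigl(v_k(0)-\tgamk\wk(0)/2\bigr)+O(1)=0$, which gives the claim.

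Combining the two displays and using $\gamma_0\int_{I_+}\alpha\,\calP(d\alpha)=4$,
\[
\int_{I_+}\bigl(\bar{w}_{k,\alpha}+\wka(0)\bigr)\calP(d\alpha)=\frac{2\wk(0)}{\gamma_0}\bigl(\gamma_0-\tgamk\bigr)+O(1).
\]
By (\ref{eqn:gam-estimate-above}), $\tgamk\le\bar{\lambda}(2\pi)^{-1}\int_{I_+}\beta\calP(d\beta)=\gamma_0$, so $\gamma_0-\tgamk\ge 0$; since $\wk(0)\to+\infty$ by Lemma \ref{lem:2-1}, the leading term is nonnegative for $k\gg 1$. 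Hence the right-hand side is bounded below uniformly in $k$, which proves (\ref{eqn:4.2}), in fact in the stronger form $\liminf>-\infty$.

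The main delicacy is the one-sided bound $\tgamk\le\gamma_0$ in the final step. Without keeping the precise coefficient $\tgamk$ in the Y.Y.\ Li estimate, one would only obtain a cancellation of leading terms up to an $o(\wk(0))$ error of indeterminate sign, which is insufficient. Thus it is essential to track the residual $\gamma_0-\tgamk\ge 0$ rather than absorbing it into an $o(1)$ at an earlier stage.
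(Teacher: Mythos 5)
Your proof is correct and arrives at essentially the same key identity as the paper, namely $\int_{I_+}(\bar{w}_{k,\alpha}+\wka(0))\calP(d\alpha)=2\wk(0)\left(1-\tgamk/\tgam\right)+O(1)$, and then concludes exactly as the paper does from the one-sided bound $\tgamk\leq\tgam$ in (\ref{eqn:gam-estimate-above}) together with $\wk(0)\rightarrow+\infty$. The only difference is how $v_k(0)$ (equivalently $\bar{w}_{k,\alpha}$) is related to $\wk(0)$: the paper simply evaluates $\bar{w}_{k,\alpha}=\wka(x')+O(1)$ at a fixed reference point $x'\in\partial B_{R_0/2}$ via Lemma \ref{lem:3-1} and applies (\ref{eqn:4.1-2}) there, whereas you integrate (\ref{eqn:4.1-2}) over $B_{R_0}$ against the volume element and invoke $\int_\Omega v_k=0$ --- both are valid, the pointwise evaluation being shorter.
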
 

\begin{proof} \ Fix $x'\in\partial B_{R_0/2}$. 
Then it holds that 
\begin{align}
\bar{w}_{k,\alpha}&=w_{k,\alpha}(x')+O(1)=\wka(0)-\alpha\tgamk\log(1+e^{\wk(0)/2}|x'|)+O(1) \nonumber\\
&=\left(1-\frac{\alpha\tgamk}{2}\right)w_k(0)+O(1)
 \label{eqn:mean-wka}
\end{align}
by (\ref{eqn:3.1})-(\ref{eqn:3.2}), (\ref{eqn:4.1-2}) and (\ref{eqn:4.1}). 
Since $\tgam\geq \tgamk$ and $\int_{I_+}(1-\alpha\tgam/4)\calP(d\alpha)=0$ by (\ref{eqn:gam-estimate-above}) and (\ref{eqn:2.17}), respectively, it follows that  
\begin{align*}
&\int_{I_+} (\bar{w}_{k,\alpha}+\wka(0))\calP(d\alpha)= 2\wk(0)\int_{I_+}\left(1-\frac{\alpha\tgamk}{4}\right)\calP(d\alpha)+O(1) \\
&\quad =2w_k(0)\int_{I_+}\left(1-\frac{\alpha\tgam}{4}\right)\calP(d\alpha)+\frac{1}{2}w_k(0)(\tgam-\tgamk)\int_{I_+}\alpha\calP(d\alpha)+O(1) \\
&\quad \geq 2w_k(0)\int_{I_+}\left(1-\frac{\alpha\tgam}{4}\right)\calP(d\alpha)+O(1)=O(1), 
\end{align*}
where we have used (\ref{eqn:mean-wka}) and (\ref{eqn:4.1}) in the first equality. 
\end{proof}

\begin{lem}
It holds that 
\begin{equation}
J_{\lambda_k}(v_k)=\frac{\lambda_k}{2}\int_{I_+}\left(\bar{w}_{k,\alpha}+\int_\Omega \wka e^{\wka}\right)\calP(d\alpha). 
 \label{eqn:4-0}
\end{equation}
 \label{lem:4}
\end{lem}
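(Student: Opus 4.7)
The plan is to derive \eqref{eqn:4-0} as a direct consequence of the fact that $v_k$ solves the Euler--Lagrange equation \eqref{eqn:SawadaSuzukiModel} with $\lambda=\lambda_k$, combined with the normalization $\int_\Omega v_k=0$ and the defining relation $\wka=\alpha v_k-\log\int_\Omega e^{\alpha v_k}$.

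First I would test \eqref{eqn:SawadaSuzukiModel} against $v_k$ and integrate by parts. Because $\int_\Omega v_k=0$, the $1/|\Omega|$ term in the integrand drops out, giving
\[
\int_\Omega |\nabla v_k|^2=\lambda_k\int_{I_+}\alpha\,\frac{\int_\Omega v_k e^{\alpha v_k}}{\int_\Omega e^{\alpha v_k}}\,\calP(d\alpha).
\]
Next I would rewrite the inner factor in terms of $\wka$. Since $e^{\wka}=e^{\alpha v_k}/\int_\Omega e^{\alpha v_k}$ and, for $\alpha\in I_+\setminus\{0\}$, $v_k=\alpha^{-1}\bigl(\wka+\log\int_\Omega e^{\alpha v_k}\bigr)$, substitution yields
\[
\alpha\cdot\frac{\int_\Omega v_k e^{\alpha v_k}}{\int_\Omega e^{\alpha v_k}}=\int_\Omega \wka e^{\wka}+\log\int_\Omega e^{\alpha v_k},
\]
using $\int_\Omega e^{\wka}=1$. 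Hence
\[
\int_\Omega |\nabla v_k|^2=\lambda_k\int_{I_+}\left(\int_\Omega \wka e^{\wka}+\log\int_\Omega e^{\alpha v_k}\right)\calP(d\alpha).
\]

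Second, I would exploit $\int_\Omega v_k=0$ once more to identify $\bar{w}_{k,\alpha}=-\log\int_\Omega e^{\alpha v_k}$ directly from the definition of $\wka$. This converts the log-term in the definition of $J_{\lambda_k}(v_k)$ into $\int_{I_+}\bar{w}_{k,\alpha}\calP(d\alpha)$. Plugging both representations into
\[
J_{\lambda_k}(v_k)=\tfrac{1}{2}\int_\Omega |\nabla v_k|^2-\lambda_k\int_{I_+}\log\int_\Omega e^{\alpha v_k}\,\calP(d\alpha),
\]
the two log-terms combine as $-\tfrac{\lambda_k}{2}\int_{I_+}\log\int_\Omega e^{\alpha v_k}\calP(d\alpha)=\tfrac{\lambda_k}{2}\int_{I_+}\bar{w}_{k,\alpha}\calP(d\alpha)$, producing the claimed identity \eqref{eqn:4-0}.

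The argument is essentially a bookkeeping computation, so there is no serious obstacle. The only mild subtlety is that $\wka$ is defined only for $\alpha\in I_+\setminus\{0\}$; however, under the standing assumptions of Theorem \ref{thm:main} (in particular Proposition \ref{pro:6}, which ensures $\amin>0$), one has $\calP(\{0\})=0$, so the point $\alpha=0$ contributes nothing to either side of \eqref{eqn:4-0}.
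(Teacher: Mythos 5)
Your proposal is correct and follows essentially the same route as the paper: both arguments test the Euler--Lagrange equation against the solution (you use the $v_k$-equation directly, the paper uses the equivalent equation (\ref{eqn:eqnw}) for $\wka$ tested against $\wka$), then use $\int_\Omega v_k=0$, $\int_\Omega e^{\wka}=1$, the identity $\bar{w}_{k,\alpha}=-\log\int_\Omega e^{\alpha v_k}$ and $\calP(I_+)=1$ to rearrange. Your closing remark about $\alpha=0$ is a harmless (and correct) observation; the computation is sound.
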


\begin{proof} By (\ref{eqn:1.13h}) and $\int_\Omega v_k=0$, we have 
\begin{equation}
\frac{1}{2}\int_\Omega \vert \nabla v_k\vert^2=\frac{1}{2\alpha^2}\int_\Omega \vert \nabla \wka\vert^2 
 \label{eqn:4-1}
\end{equation}
and
\begin{equation}
\bar{w}_{k,\alpha}=-\log\left(\int_\Omega e^{\alpha v_k} dx\right), 
 \label{eqn:4-2}
\end{equation}
respectively. 
Multiplying (\ref{eqn:eqnw}) by $\wka$ and using 
\[
\int_\Omega v_k=0, \quad \wka=\alpha v_k+\bar{w}_{k,\alpha}, \quad \int_\Omega e^{\wkb}=1, 
\]
we get 
\begin{align}
&\int_\Omega \vert \nabla \wka\vert^2=\alpha\lambda_k\int_\Omega \wka\left(\int_{I_+}\beta\left(e^{\wkb}-\frac{1}{|\Omega|}\right)\calP(d\beta)\right) dx \nonumber\\ 
&\quad =\alpha^2\lambda_k\int_\Omega v_k \int_{I_+}\beta e^{\wkb}\calP(d\beta) \ dx \nonumber\\
&\quad =\alpha^2\lambda_k\int_{I_+}\int_\Omega \wkb e^{\wkb}dx \ \calP(d\beta)-\alpha^2\lambda_k\int_{I_+}\bar{w}_{k,\beta}\calP(d\beta). 
 \label{eqn:4-3}
\end{align}

We combine (\ref{eqn:4-1})-(\ref{eqn:4-3}) with $\calP(I_+)=1$ to obtain  
\begin{align*}
&J_{\lambda_k}(v_k)=\frac{1}{2}\int_\Omega\vert \nabla v_k\vert^2-\lambda_k\int_{I_+}\log\left(\int_\Omega e^{\alpha v_k}\right)\calP(d\alpha) \\
&\quad =\frac{1}{2}\int_{I_+}\frac{1}{\alpha^2}\int_\Omega\vert \nabla \wka\vert^2 dx \ \calP(d\alpha)
+\lambda_k \int_{I_+}\bar{w}_{k,\alpha}\calP(d\alpha) \\
&\quad =\frac{\lambda_k}{2}\int_{I_+}\int_\Omega \wka e^{\wka} dx \ \calP(d\alpha)-\frac{\lambda_{k}}{2}\int_{I_+}\bar{w}_{k,\alpha}\calP(d\alpha) \\ 
&\quad +\lambda_k \int_{I_+}\bar{w}_{k,\alpha}\calP(d\alpha) =\frac{\lambda_k}{2}\int_{I_+}\left(\bar{w}_{k,\alpha}+\int_\Omega w_{k, \alpha}e^{\wka}\right)\calP(d\alpha). 
\end{align*}
The proof is complete. 
\end{proof} 

We now prove Theorem \ref{thm:main} in the following. \\ 

\underline{{\it Proof of Theorem \ref{thm:main}:}} \ We shall show that (\ref{eqn:1.9h}) holds. To this end we apply $\int_\Omega e^{\wka}=1$ in (\ref{eqn:4-0}) and get 
\begin{align*} 
J_{\lambda_k}(v_k)&=\frac{\lambda_k}{2}\left\{\int_{I_+}(\bar{w}_{k,\alpha}+\wka(0))\calP(d\alpha) \right. \\ 
&\quad \left. +\int_{I_+}\calP(d\alpha)\int_\Omega (\wka(x)-\wka(0))e^{\wka(x)}dx\right\}. 
\end{align*} 
Hence the proof of (\ref{eqn:1.9h}) is reduced to showing 
\begin{equation}
\int_{I_+}\calP(d\alpha)\int_\Omega (\wka(x)-\wka(0))e^{\wka(x)}dx=O(1), 
 \label{eqn:main-2}
\end{equation}
thanks to (\ref{eqn:4.2}). 

To show (\ref{eqn:main-2}), we take $x'\in\Psi_k^{-1}(\partial B_{R_0/2})$. 
Then, (\ref{eqn:3.1})-(\ref{eqn:3.2}), (\ref{eqn:4.1-2}), (\ref{eqn:4.1}), (\ref{eqn:1.12h}) and (\ref{eqn:3.9}) imply, 
uniformly in $x\in \Omega\setminus \Psi_k^{-1}(B_{R_0})$ and $\alpha\in [\amin,1]$, that 
\begin{align*}
&(\wka(x)-\wka(0))e^{\wka(x)}=e^{O(1)}(O(1)+\wka(x')-\wka(0))e^{\wka(x')} \\ 
&=e^{O(1)} \left(O(1)-\alpha\tgamk\log \left( 1+e^{\wk(0)/2}\vert x'\vert\right)\right) e^{\wka(0)-\alpha\tgamk\log \left(1+e^{\wk(0)/2}\vert x'\vert\right)} \nonumber\\ 
&=e^{O(1)} \left(O(1)-\frac{\alpha\tgamk}{2}w_k(0) \right) e^{-\left(\frac{\alpha\tgamk}{2}-1\right)\wk(0)}=o(1). 
\end{align*}
Hence it follows that 
\begin{equation}
\int_{[\amin,1]}\calP(d\alpha)\int_{\Omega\setminus \Psi_k^{-1}(B_{R_0})}(\wka(x)-\wka(0))e^{\wka(x)}dx=o(1). 
 \label{eqn:main-2'}
\end{equation}
Finally, (\ref{eqn:4.1-2}), (\ref{eqn:4.1}), (\ref{eqn:1.12h}) and (\ref{eqn:3.9}) imply 
\begin{align}
&\int_{B_{R_0}}(\wka(x)-\wka(0))e^{\wka(x)+\xik(x)}dx \nonumber\\ 
&=-e^{\wka(0)+O(1)}\int_{B_{R_0}}e^{(\wka(x)-\wka(0))+\xik(x)}\log \left(1+e^{\wk(0)/2}\vert x\vert\right)dx+O(1) \nonumber\\ 
&=-e^{\wka(0)+O(1)}\int_{B_{R_0}}\frac{\log(1+e^{\wk(0)/2}\vert x\vert)}{\left(1+e^{\wk(0)/2}\vert x\vert\right)^{\alpha\tgamk}}dx+O(1) \nonumber\\ 
&=-e^{\wka(0)-\wk(0)+O(1)}\int_{B_{e^{\wk(0)/2}R_0}}\frac{\log(1+\vert x\vert)}{\left(1+\vert x\vert\right)^{\alpha\tilde{\gamma}_k}}dx+O(1)
=O(1)
 \label{eqn:main-2''}
\end{align}
uniformly in $\alpha\in [\amin,1]$. Then (\ref{eqn:main-2}) follows from (\ref{eqn:main-2'}) and (\ref{eqn:main-2''}).  \qed\\ 

\appendix

\section{Proof of Lemma \ref{lem:2-a}}\label{sec:appendixA}
Given $K>0$, we put 
\begin{align*}
&I_1(x)=\int_{D_1}\frac{\log|x-y|-\log(1+|y|)-\log|x|}{\log|x|}f(y)dy \\
&I_{2,K}(x)=\int_{D_{2,K}}\frac{\log|x-y|-\log(1+|y|)-\log|x|}{\log|x|}f(y)dy \\
&I_{3,K}(x)=\int_{D_{3,K}}\frac{\log|x-y|-\log(1+|y|)-\log|x|}{\log|x|}f(y)dy, 
\end{align*}
where 
\begin{align*}
&D_1=D_1(x)\equiv \{y\in{\bf R}^2 \mid \vert y-x\vert\leq 1\} \\
&D_{2,K}=D_{2,K}(x)\equiv \{y\in{\bf R}^2 \mid \vert y-x\vert>1, \ \vert y\vert\leq K\} \\
&D_{3,K}=D_{3,K}(x)\equiv \{y\in{\bf R}^2 \mid \vert y-x\vert>1, \ \vert y\vert> K\}.
\end{align*}
Then it holds that 
\[ \frac{z(x)}{\log \vert x\vert}-\gamma=\frac{1}{2\pi}(I_1(x)+I_{2,K}(x)+I_{3,K}(x)). \]
We have only to show that each $\varepsilon>0$ admits $K_\varepsilon$ and $L_\varepsilon>0$ such that 
\begin{equation}
|I_1(x)|+|I_{2,K_\varepsilon}(x)|+|I_{3,K_\varepsilon}(x)|\leq\varepsilon 
 \label{eqn:lem:2-2-1-1}
\end{equation}
for all $x\in{\bf R}^2\setminus B_{L_\varepsilon}$. 

Since 
\[ \frac{\log(1+|y|)+\log|x|}{\log|x|}\leq\frac{\log(2+|x|)+\log|x|}{\log|x|}\leq 3, \quad x\in{\bf R}^2\setminus B_2, \ y\in D_1(x), \] 
we have 
\begin{align}
|I_1(x)| 
&\leq 3\int_{D_1(x)}f(y)dy-\frac{1}{\log|x|}\int_{D_1(x)}f(y)\log|x-y|dy \nonumber\\
&\leq 3\int_{D_1(x)}f(y)dy-\frac{\|f\|_\infty}{\log|x|}\int_{B_1}\log|y|dy \rightarrow 0
 \label{eqn:lem:2-2-1-2}
\end{align}
uniformly as $|x|\rightarrow +\infty$, recalling $f\in L^1\cap L^\infty({\bf R}^2)$. 

Next, we have 
\[ \left|\frac{\log|x-y|-\log(1+|y|)-\log|x|}{\log|x|}\right|
\leq\frac{1}{\log|x|}\left\{\log(1+K)+\left|\log\frac{|x-y|}{|x|}\right|\right\} \]
for $x\in{\bf R}^2\setminus B_2$ and $y\in D_{2,K}(x)$, and thus
\begin{equation}
|I_{2,K}(x)|\leq\frac{1}{\log|x|}\int_{D_{2,K}(x)}\left\{\log(1+K)+\left|\log\frac{|x-y|}{|x|}\right|\right\}f(y)dy  
 \label{eqn:lem:2-2-1-2'}
\end{equation}
for $x\in{\bf R}^2\setminus B_2$. 
From  
\[ \frac{1}{2+|x|}\leq \frac{|x-y|}{1+|y|}\leq 1+|x|, \quad x\in{\bf R}^2, \ \vert y-x\vert\geq 1, \] 
we derive
\[ \left|\frac{\log|x-y|-\log(1+|y|)-\log|x|}{\log|x|}\right|\leq 3, \quad x\in{\bf R}^2\setminus B_2, \ |y-x|\geq 1 \] 
to obtain 
\begin{equation}
|I_{3,K}(x)|\leq 3\int_{D_{3,K}(x)}f(y)dy \leq 3\int_{{\bf R}^2\setminus B_K}f(y)dy
 \label{eqn:lem:2-2-1-2''}
\end{equation}
for $x\in{\bf R}^2\setminus B_2$.

Recalling $0\leq f\in L^1({\bf R}^2)$, let $\varepsilon_0>0$ be given. From (\ref{eqn:lem:2-2-1-2''}), there exists $K_0>0$ such that 
\[ |I_{3,K}(x)|\leq \varepsilon_0 \]
for all $K\geq K_0$ and $x\in{\bf R}^2\setminus B_2$. Next, by (\ref{eqn:lem:2-2-1-2'}) any $K>0$ admits $L_K>0$ such that 
\[ |I_{2,K}(x)|\leq \varepsilon_0 \]
for all $x\in{\bf R}^{2}\setminus B_{L_K}$, and therefore 
\begin{equation}
|I_{2,K_0}(x)|+|I_{3,K_0}(x)|\leq 2\varepsilon_0
 \label{eqn:lem:2-2-1-3}
\end{equation}
for all $x\in{\bf R}^{2}\setminus B_{L_{K_0}}$. 

Thus we obtain (\ref{eqn:lem:2-2-1-1}) by (\ref{eqn:lem:2-2-1-2}) and (\ref{eqn:lem:2-2-1-3}). \qed

\end{document}